\theoremstyle{plain}
    \newtheorem{theorem}{Theorem}
    \newtheorem{corollary}[theorem]{Corollary}
    \newtheorem{proposition}{Proposition}[section]
    \newtheorem{corollarysection}{Corollary}[section]
    \newtheorem{lemma}[proposition]{Lemma}
\newtheorem*{strangerlemma}{Lemma}
\newtheorem{claim}{Claim}
\newtheorem*{othertheorem}{Theorem}
\theoremstyle{definition}
    \newtheorem{definition}[proposition]{Definition}
\theoremstyle{remark}
    \newtheorem{remark}[proposition]{Remark}
 \def\NN{{\mathbb N}}  
 \def\RR{{\mathbb R}} \def\SS{{\mathbb S}} 
 \def\ZZ{{\mathbb Z}}
\def\Si{\Sigma}
\def\cA{{\mathcal A}}  \def\cG{{\mathcal G}}  \def\cS{{\mathcal S}}
\def\cB{{\mathcal B}}    
\def\cC{{\mathcal C}}   \def\cO{{\mathcal O}} \def\cU{{\mathcal U}}
\def\cD{{\mathcal D}}   \def\cP{{\mathcal P}} \def\cV{{\mathcal V}}
\def\cE{{\mathcal E}}   \def\cQ{{\mathcal Q}} \def\cW{{\mathcal W}}
\def\cF{{\mathcal F}}  \def\cL{{\mathcal L}} \def\cR{{\mathcal R}}
\def\Diff{\operatorname{Diff}}
\def\dist{\operatorname{dist}}
\def\Orb{\operatorname{Orb}}
\def\Int{\operatorname{int}}
\def\cl{\operatorname{cl}}
\def\Id{\operatorname{Id}}
\def\size{\operatorname{size}}
\title[A Franks Lemma that Preserves Invariant Manifolds]
      {A Franks Lemma that Preserves Invariant Manifolds}
\author[Nikolaz Gourmelon]{}
\subjclass{Primary:  37C25, 37C29; Secondary: 37C20, 37D10.}
 \keywords{Franks Lemma, periodic point, saddle point, linear cocycle, perturbation, stable/unstable manifold, dominated splitting, homoclinic tangency, small angles.}
 \email{ngourmel@math.u-bordeaux1.fr}
\thanks{The author was supported by the Institut de Math\'ematiques de Bourgogne (Universit\'e de Dijon) by IMPA (Rio de Janeiro) and by the Institut de Math\'ematiques de Bordeaux (Universit\'e Bordeaux I)}
\begin{document}
\maketitle

\centerline{\scshape Nikolaz Gourmelon}
\medskip
{\footnotesize
 \centerline{Institut de Math\'ematiques de Bordeaux}
 \centerline{Universit\'e Bordeaux 1}
   \centerline{351, cours de la Lib\'eration}
   \centerline{ F 33405 TALENCE cedex, FRANCE}
} 

\bigskip


\renewcommand{\abstractname}{R\'esum\'e}
\begin{abstract} 
D'apr\`es un c\'el\`ebre lemme de John Franks, toute perturbation de la diff\'erentielle d'un diff\'eomorphisme $f$ le long d'une orbite p\'eriodique est r\'ealis\'ee par une $C^1$-perturbation $g$ du diff\'eomorphisme sur un petit voisinage de ladite orbite. On n'a cependant aucune information sur le comportement des vari\'et\'es invariantes de l'orbite p\'eriodique apr\`es perturbation.

Nous montrons que si la perturbation de la d\'eriv\'ee est obtenue par une isotopie le long de laquelle existent les vari\'et\'es stables/instables fortes de certaines dimensions, alors on peut faire la perturbation ci-dessus en pr\'eservant les vari\'et\'es stables/instables semi-locales correspondantes. Ce r\'esultat a de nombreuses applications en syst\`emes dynamiques de classe $C^1$. Nous en d\'emontrons quelques unes. \end{abstract}
\renewcommand{\abstractname}{Abstract}

\begin{abstract}A well-known lemma by John Franks asserts that one obtains any perturbation of the derivative of a diffeomorphism along a periodic orbit by a $C^1$-perturbation of the whole diffeomorphism on a small neighbourhood of the orbit. However, one does not control where the invariant manifolds of the orbit are, after perturbation.

We show that if the perturbated derivative is obtained by an isotopy along which some strong stable/unstable manifolds  of some dimensions exist, then the Franks perturbation can be done preserving the corresponding stable/unstable semi-local manifolds. This is a general perturbative tool in $C^1$-dynamics that has many consequences. We give simple examples of such consequences, for instance a generic dichotomy between dominated splitting and small stable/unstable angles inside homoclinic classes.

\end{abstract}

\maketitle

\section{Introduction}

A few $C^1$-specific tools and ideas are fundamental in the study of the dynamics of $C^1$-generic diffeomophisms, that is, diffeomorphisms of a residual subset of the set $\Diff^1(M)$ of $C^1$-diffeomorphisms on a Riemannian manifold $M$.

On the one hand, one relies on closing and connecting lemmas to create periodic points and to create homoclinic relations between them.
The $C^1$-Closing Lemma of Pugh~\cite{Pu67} allows to close a recurrent orbit by an arbitrarily small $C^1$-perturbation. 
The connecting lemma of Hayashi~\cite{Hay}, whose proof relies on ideas derived from that of the closing lemma, says that if the unstable manifold of a saddle point accumulates on a point of the stable manifold of another saddle, then a $C^1$-perturbation creates a transverse intersection between the two manifolds. That was further generalized by Wen, Xia and Arnaud in~\cite{WX,arn1} and Bonatti and Crovisier in~\cite{BoCro,Cr1}, where powerful generic consequences are obtained.

On the other hand, we have tools to create dynamical patterns by $C^1$-perturbations in small neighbourhoods of periodic orbit.
John Franks~\cite{Fr} introduced a lemma that allows to reach any perturbation of the derivative along a periodic orbit as a $C^1$-perturbation of the whole diffeomorphism on an arbitrarily small neighbourhood of that orbit. This allows to systematically reduce $C^1$-perturbations along periodic orbits to linear algebra. 

Other perturbation results are about generating homoclinic tangencies by $C^1$-perturbations near periodic saddle points. To prove the Palis $C^1$-density conjecture in dimension $2$, that is, to prove that there is a $C^1$-dense subset of diffeomorphisms of surfaces that are hyperbolic  or admit a homoclinic tangency, Pujals and Sambarino~\cite{PuSam} first show that if the dominated splitting between the stable and unstable directions of a saddle point is not strong enough, then a $C^1$-perturbation of the derivative along the orbit induces a small angle between the two eigendirections. They apply the Franks' Lemma and do another perturbation to obtain a tangency between the two manifolds. In~\cite{W1}, Wen gave a generalization of that first step in dimension greater than $2$ under similar non-domination hypotheses.

These perturbations results rely on the Franks' lemma which unfortunately fails to yield any information on the behaviour of the invariant manifolds of the periodic point after perturbation. In particular, one does not control a priori what homoclinic class the periodic point will belong to, what strong connections there may be after perturbation, and it may not be possible to apply a connecting lemma in order to recreate a broken homoclinic relation.

\medskip
In~\cite{Gou}, a technique is found to preserve any fixed finite set in the invariant manifolds of a periodic point for particular types of perturbations along a periodic orbit. In particular it implies that one can create homoclinic tangencies inside homoclinic classes on which there is no stable/unstable uniform dominated splitting. This technique however is complex and difficult to adapt to other contexts. 

In this paper, we provide a simple setting in which the Franks' perturbation lemma can be tamed into preserving most of the invariant manifolds of the saddle point. Let us first state the Franks' Lemma:
\begin{strangerlemma}[Franks]
Let $f$ be a diffeomorphism. For all $\epsilon>0$, there is $\delta>0$ such that, for any periodic point $P$ of $f$, for any $\delta$-perturbation $(B_1,...,B_p)$ of the tuple $(A_1,...,A_p)$ of matrices that corresponds to the derivative $Df$ along the orbit $\Orb_P$ of $P$, for any neighbourhood $U$ of $\Orb_P$, one finds a $C^1$ $\epsilon$-perturbation $g$ of $f$ on $\cU$ that coincides with $f$ throughout $\Orb_P$ and whose derivative along it corresponds to $(B_1,...,B_p)$.
\end{strangerlemma}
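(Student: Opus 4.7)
The plan is to construct the perturbation $g$ locally at each point of the orbit, working in a uniform system of charts provided by the exponential map of a fixed Riemannian metric on $M$. This uniform chart system will be essential in ensuring that $\delta$ depends only on $f$ and $\epsilon$, and not on the periodic orbit itself.

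First I would fix once and for all a smooth bump function $\phi \colon \RR^n \to [0,1]$ equal to $1$ on a neighborhood of $0$ and supported in the unit ball, and let $K = \|\nabla \phi\|_\infty$. Choose a radius $r_0>0$ small enough that for every $z\in M$ the exponential map $\exp_z$ is a chart of uniformly bounded distortion on $B(0,r_0)\subset T_z M$; let $C>0$ be a constant controlling this distortion simultaneously for $f$ and $f^{-1}$ on such charts. Given an orbit $\mathcal{O}=\{x,f(x),\dots,f^{p-1}(x)\}$, read $f$ in the pair of exponential charts at $f^i(x)$ and $f^{i+1}(x)$; this produces, for $i=0,\dots,p-1$, local diffeomorphisms $\tilde f_i$ fixing $0$ and with derivative at $0$ identified with $A_i$. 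Given a perturbed cocycle $(B_1,\dots,B_p)$, for any $r\in(0,r_0)$ I define
\[
\tilde g_i(y) \;=\; \tilde f_i(y) \;+\; \phi(y/r)\,(B_i-A_i)\,y.
\]
A direct computation yields $\tilde g_i(0)=0$, $D\tilde g_i(0)=B_i$, and
\[
\|\tilde g_i - \tilde f_i\|_{C^0} \;\leq\; r\,\|B_i-A_i\|,
\qquad
\|D\tilde g_i - D\tilde f_i\|_{C^0} \;\leq\; (1+K)\,\|B_i-A_i\|,
\]
uniformly in $r$ and in the orbit point.

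Next I would set $\delta = \epsilon/\bigl(C(1+K)\bigr)$. Given the neighbourhood $\cU$ of the orbit and a $\delta$-perturbation $(B_1,\dots,B_p)$, I choose a radius $r<r_0$ small enough that the balls $B(f^i(x),r)$ are pairwise disjoint and all contained in $\cU$, and define $g$ to coincide with $f$ outside these balls and to equal $\exp_{f^{i+1}(x)}\circ\,\tilde g_i\circ \exp_{f^i(x)}^{-1}$ on $B(f^i(x),r)$. Because $\phi(y/r)$ vanishes on a neighbourhood of the boundary of $B(0,r)$, the map $g$ is $C^1$ (in fact smooth if $f$ is) and agrees with $f$ near the boundary of $\cU$. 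By construction $g$ preserves the orbit, has derivative $B_i$ at $f^i(x)$, and transporting the local bounds back to $M$ gives $\|g-f\|_{C^1}\le C(1+K)\delta=\epsilon$; since $f$ is a diffeomorphism and the perturbation is $C^1$-small, $g$ is a diffeomorphism.

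The step I expect to require the most care is the uniformity of $\delta$: the constant must not depend on the periodic orbit (which can have arbitrarily large period and points lying anywhere in $M$). This is precisely why I would set things up with exponential charts on a compact manifold, so that a single constant $C$ controls the passage between the intrinsic $C^1$-norm of $g-f$ on $M$ and the Euclidean $C^1$-norm of $\tilde g_i - \tilde f_i$ in the charts. The remaining subtleties, namely making the support of the local perturbations disjoint and contained in $\cU$ and keeping the $C^1$-perturbation genuinely a diffeomorphism, are handled automatically by shrinking $r$ and by the $C^1$-smallness of $g-f$.
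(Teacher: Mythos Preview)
Your argument is correct and is essentially the classical proof of Franks' Lemma: perturb in exponential charts by adding a bump-function-localized linear correction $\phi(y/r)(B_i-A_i)y$, and note that the $C^1$-size of the correction is bounded by $(1+K)\|B_i-A_i\|$ independently of $r$ and of the orbit, so a uniform $\delta$ works. The only places where one should be slightly more explicit are (i) that $\delta$ must also be chosen small enough that $D\tilde g_i$ stays invertible (i.e.\ so that $(1+K)\delta$ is less than a uniform lower bound on the conorm of $D\tilde f_i$), and (ii) that the uniformity of the chart constant $C$ indeed requires compactness of $M$, which is the standing hypothesis of the paper.

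As for comparison with the paper: the paper does not give a detailed proof of this lemma; it is quoted as Franks' original result~\cite{Fr}, and the only place where its proof is alluded to is the two-line proof of $\cP_{\emptyset,\emptyset}$ and $\cP'_{\emptyset,\emptyset}$, which says ``take a unit partition $\mu+\nu=1$ on $M$ such that $\mu=1$ outside of a small neighbourhood of $X$ and $\mu=0$ in a smaller neighbourhood, then follow the proof of Franks' Lemma.'' Your bump function $\phi(\cdot/r)$ is exactly such a partition of unity, so your write-up is precisely the argument the paper has in mind.
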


We introduce a perturbation theorem that extends the Franks' Lemma, controlling both the behaviour of the invariant manifolds of $\Orb_P$, and the size of the $C^1$-perturbation needed to obtain the derivative $(B_1,...,B_p)$. Precisely, we prove that if the perturbation is done by an isotopy along a path of 'acceptable derivatives', that is, if the strong stable/unstable directions of some indices exist all along that path, then the diffeomorphism $g$ can be chosen so that it preserves the corresponding local strong stable/unstable manifolds outside of an arbitrarily small neighbourhood. Moreover, the size of the perturbation can be found arbitrarily close to the radius of the path. 

In order to prove our main theorem, we will rely on the fundamental $C^r$-perturbative Proposition~\ref{p.pertpropsimple} and the $C^1$-linearization \cref{c.linearisationC1}. These results are stated in \cref{s.mainpertpropos}.  
In \cref{s.isotopic}, we show that \cref{p.pertpropsimple} and its corollary induce the main theorem.  \cref{p.pertpropsimple} is proved in \cref{s.prop4}.

In section~\ref{s.consequences}, we give examples of a few isotopic perturbative results on linear cocycles, to show possible applications of our main theorem. For instance, we can turn the eigenvalues of a large period saddle point to have real eigenvalues, and preserve at the same time most of its strong stable/unstable manifolds. We also deduce a generic dichotomy inside homoclinic classes between dominated splittings and small angles. 

This result has already allowed a number of new developments by Potrie~\cite{Po} and Bonatti, Crovisier, D\'iaz and Gourmelon~\cite{BCDG}). Some  impressive results have recently been announced by Bonatti and Shinohara, and by Bonatti, Crovisier and Shinohara. These are detailed in the next section.

\bigskip

\noindent{\em {\bf Remerciements :}
Je remercie chaleureusement Jairo Bochi, Christian Bonatti, Sylvain Crovisier, Lorenzo D\'iaz et Rafael Potrie pour de nombreuses discussions, suggestions et encouragements ainsi que Marcelo Viana, le CNPQ et l'IMPA (Rio de Janeiro).} 
\medskip

\subsection{Statement of results}\label{s.statementresults}
Let $A$ be a linear map such that its eigenvalues $\lambda_1,\ldots,\lambda_d$, counted with multiplicity and ordered by increasing moduli, satisfy $|\lambda_i|<\min(|\lambda_{i+1}|,1)$. Then the {\em $i$-strong stable direction} of $A$ is defined as the $i$-dimensional invariant space corresponding to eigenvalues $\lambda_1,...,\lambda_i$.

If $P$ is a periodic point of period $p$ for a diffeomorphism $f$ and if the first return map $Df^p$ admits an $i$-strong stable direction, then there is inside the stable set of the orbit $\Orb_P$ of $P$ a unique boundaryless $i$-dimensional immersed $f$-invariant manifold that is tangent to that direction at $P$. We call it the {\em $i$-strong stable manifold} of $\Orb_P$ for $f$, and denote it by  $W^{i,ss}(P,f)$. One defines symmetrically the {\em $i$-strong unstable manifolds}, replacing $f$ by $f^{-1}$, and denote them by $W^{i,uu}(P,f)$. 
We denote by $W^{s/u}(P,f)$ the {\em stable/unstable} manifold of the orbit $\Orb_P$ of $P$, that is the strong stable/unstable manifold of maximum dimension.

\begin{definition}
A set $W^{+}(f,P)$ is a {\em local stable manifold} of $\Orb_P$ for $f$ if 
it is a strictly $f$-invariant\footnote{That is, $f\bigl[W^+(f,P)\bigr]$ is in the interior $\Int W^+(f,P)$ of $W^+(f,P)$, where $\Int W^+(f,P)=W^+(f,P)\setminus \partial W^+(f,P)$.} union of disjoint disks $\{D_{n}\}_{0\leq n<p}$, 
where each $D_{n}$ is a smooth ball inside the strong stable manifold $W^{ss,i}(f)$ that contains $f^n(P)$ in its interior.
\end{definition}

Symmetrically, a set $W^{-}(f,P)$ is a {\em local unstable manifold} of $\Orb_P$ for $f$ if it is a local stable manifold of $\Orb_P$ for $f^{-1}$.

Finally, if we have both $f=g$ and $f^{-1}=g^{-1}$ by restriction to (resp. outside) some set $K$, then we write "$f^{\pm 1}=g^{\pm 1}$ on $K$" (resp.  "$f^{\pm 1}=g^{\pm 1}$ outside $K$").

We are now ready to state the main theorem:

\begin{theorem}\label{t.mainsimplestatement}
Let $P$ be a $p$-periodic point for a diffeomorphism $f$ on a Riemannian manifold $(M,\|.\|)$. Fix a path $$\{\cA_{t}=(A_{1,t},\ldots,A_{p,t})\}_{t\in [0,1]}$$ where each $A_{n,t}$ is a linear map from $T_{f^{n-1}(P)}M$ to $T_{f^{n}(P)}M$, and the $p$-tuple $\cA_0=(A_{1,0},\ldots,A_{p,0})$ is the derivative of $f$ along $\Orb_P$. Then, 
\begin{itemize}
\item for any $\delta$ greater than the radius of the path $\cA_t$, that is,
$$\delta>\max_{1\leq n\leq p\atop t\in[0,1]}\left\{\|A_{n,t}-A_{n,0}\|, \|A^{-1}_{n,t}-A^{-1}_{n,0}\|\right\}\footnote{$\|A\|$ is the operator norm of the morphism of Euclidean spaces $A\colon T_{f^{n-1}(P)}M \to T_{f^{n}(P)}M$.},$$
\item for any neighborhood $U$ of $\Orb_P$,
\item for any local stable and unstable manifolds $W^+(f,P)$ and $W^-(f,P)$,
\end{itemize}
there is a $\delta$-perturbation $g$ of $f$ for the $C^1$-topology, and a pair of local stable and unstable manifolds $W^+(g,P)$ and $W^-(g,P)$, such that it holds: 
\begin{itemize}
\item $f^{\pm 1}=g^{\pm 1}$ throughout $\Orb_P$ and outside $U$,
\item the derivative of $g$ along $\Orb_P$ is the tuple $\cA_1=(A_{1,1},\ldots,A_{p,1})$,
\item For each $i\in \NN$, if the linear endomorphism $B_t=A_{p,t}\circ ... \circ A_{1,t}$ admits an $i$-strong stable (resp. unstable) direction for all $t\in[0,1]$, we have
$$\bigl[W^{ss,i}(g,P)\cap W^+(g,P)\bigr]\setminus U=\bigl[W^{+,i}(f,P)\cap W^+(f,P)\bigr]\setminus U,$$
and likewise for the strong unstable manifolds.
\end{itemize}
\end{theorem}

That is, one can do the Franks' perturbation while preserving the "semilocal" $i$-strong stable manifold, whenever the $i$-strong stable direction exists all along the isotopy by which we perturb the derivative, and likewise for strong unstable manifolds.

For $\theta\in\{s,u\}$ or any  $\theta$ of the form $"i,ss"$ or $"i,uu"$ we denote by $W^\theta_\varrho(P,f)$ be  the set of points in $W^\theta(P,f)$ whose distance to the orbit of $x$ within the immersed manifold $W^\theta(P,f)$ is strictly  less than $\varrho$.  We call it  the ($i$-strong) (un)stable manifold {\em of size $\varrho$}.
We can state a slightly stronger version of \cref{t.mainsimplestatement}:

\begin{theorem}\label{t.mainsimplestatementmoregeneral}
Let $P$ be a $p$-periodic point for a diffeomorphism $f$ on a Riemannian manifold $(M,\|.\|)$. Fix a path $$\{\cA_{t}=(A_{1,t},\ldots,A_{p,t})\}_{t\in [0,1]}$$ where each $A_{n,t}$ is a linear map from $T_{f^{n-1}(P)}M$ to $T_{f^{n}(P)}M$, and the $p$-tuple $\cA_0=(A_{1,0},\ldots,A_{p,0})$ is the derivative of $f$ along $\Orb_P$. Let $I$ (resp. $J$) be the set of integers $i>0$ such that, for all $t\in[0,1]$, the linear endomorphism $B_t=A_{p,t}\circ ... \circ A_{1,t}$ admits an $i$-strong stable (resp. unstable) direction. Then, 
\begin{itemize}
\item for any $\delta$ greater than the radius of the path $\cA_t$,
\item for any neighborhood $U$ of $\Orb_P$ and for any $\varrho>0$,
\item for any families of compact sets 
\begin{align*}
K_i&\subset W^{i,ss}_\varrho(P,f)\setminus U\\
L_j&\subset W^{j,uu}_\varrho(P,f)\setminus U,
\end{align*}
 where $i\in I$ and $j\in J$,  
\end{itemize}
there is a $\delta$-perturbation $g$ of $f$, for the $C^1$-topology, such that it holds: 
\begin{itemize}
\item $f^{\pm 1}=g^{\pm 1}$ throughout $\Orb_P$ and outside $U$,
\item the derivative of $g$ along $\Orb_P$ is the tuple $\cA_1=(A_{1,1},\ldots,A_{p,1})$,
\item  For all $(i,j)\in I\times J$, we have
 $$K_i\subset W^{i,ss}_\varrho(P,g)\mbox{ and }L_j\subset W^{j,uu}_\varrho(P,g).$$
\end{itemize}
\end{theorem}

\begin{figure}[hbt] \label{f.figlocal}
\ifx\JPicScale\undefined\def\JPicScale{1}\fi
\psset{unit=\JPicScale mm}
\psset{linewidth=0.2,dotsep=1,hatchwidth=0.3,hatchsep=1.5,shadowsize=1,dimen=middle}
\psset{dotsize=0.7 2.5,dotscale=1 1,fillcolor=black}
\psset{arrowsize=2 2,arrowlength=1,arrowinset=0.25,tbarsize=0.7 5,bracketlength=0.15,rbracketlength=0.15}
\psset{xunit=.5pt,yunit=.5pt,runit=.5pt}
\begin{pspicture}(0,250)(600,600)
\pspolygon[](120,480)(580,480)(480,300)(20,300)
\rput(75,315){$W^s(P,f)$}
{\pscustom[linewidth=1,linecolor=black,fillstyle=solid,fillcolor=gray,opacity=1]
{\newpath \moveto(370,450)
\curveto(320,462)(208,452)(160,440)\curveto(120,430)(100,390)(120,370)
\curveto(150,340)(337,306)(450,350)\curveto(580,400)(410,440)(370,450)
\closepath }}
\rput(210,360){\large $K_2$}
{\pscustom[linewidth=1,linecolor=black,fillstyle=solid,fillcolor=white]
{\newpath
\moveto(280,410)\curveto(258,402)(253,386)(260,380)
\curveto(270,370)(360,380)(360,400)\curveto(360,426)(296,415)(280,410)
\closepath}}

{\pscustom[linewidth=1,linecolor=black,linestyle=dashed,dash=2 4]
{\moveto(270,400)\curveto(270,420)(350,420)(350,400)}}
{\newrgbcolor{curcolor}{0 0 0}
\pscustom[linewidth=1,linecolor=curcolor,fillstyle=solid,fillcolor=white, opacity=0.5]
{\newpath
\moveto(270,400)\curveto(270,450)(350,450)(350,400)
\curveto(350,380)(270,380)(270,400)
\closepath}}
{\pscustom[linewidth=1,linecolor=black,linestyle=dashed,dash=2 4]
{\moveto(350,400)\curveto(350,350)(270,350)(270,400)}}
\rput(298,417){\small $U$}

\psline[linewidth=1](310,400)(310,450)
\psline[linewidth=3](310,450)(310,545)
\psline[linewidth=1]{->}(310,545)(310,580)
\psline[linewidth=1](310,580)(310,590)
\rput(330,505){$L_1$}
\psbezier[linewidth=1](280,410)(320,400)(320,390)(342,390)
\psbezier[linewidth=3](342,390)(395,388)(440,390)(490,430)
\psbezier[linewidth=1]{<<-}(490,430)(500,438)(508,448)(530,480)

\psbezier[linewidth=3](280,410)(240,420)(200,420)(176,433)
\rput(200,553){$K_1$}
\psline[linewidth=0.3](200,540)(220,430)
\psline[linewidth=0.3](205,540)(400,400)
\psline{>>-}(120,460)(176,433)
\psbezier[linewidth=1](120,460)(100,470)(119,460)(120,460)
\end{pspicture}
\caption{Illustration of \cref{t.mainsimplestatementmoregeneral}}

\bigskip
\small 
\centering\vspace*{\fill}
\begin{minipage}{10cm}
Assume that, for all times $0\leq t\leq 1$, the first return linear map $B_t$ admits an $1$-strong unstable, a $1$ and $2$-strong stable manifolds. 
The perturbation $g$ is such that $K_1,K_2$ and $L_1$ are left respectively in the $1$- and $2$-strong stable and $1$-strong unstable manifolds of size $\varrho$ for $g$.
\end{minipage}
\vspace*{\fill}

\end{figure}

\begin{remark}\label{r.localnonlocal}
One could take the compact sets $K_i\subset W^{i,ss}(P,f)\setminus U$ and $L_j\subset W^{j,uu}(P,f)\setminus U$ and replace  
$K_i\subset W^{i,ss}_\varrho(P,g)$ in the conclusions of the theorem by the simpler $$K_i\subset W^{i,ss}(P,g).$$ However this conclusion is strictly weaker, indeed it would give way to possibly annoying situations as depicted in Figure \ref{f.fignonlocal}. 
\end{remark}

\begin{figure}[hbt] 
\label{f.fignonlocal}
\ifx\JPicScale\undefined\def\JPicScale{1}\fi
\psset{unit=\JPicScale mm}
\psset{linewidth=0.2,dotsep=1,hatchwidth=0.3,hatchsep=1.5,shadowsize=1,dimen=middle}
\psset{dotsize=0.7 2.5,dotscale=1 1,fillcolor=black}
\psset{arrowsize=2 2,arrowlength=1,arrowinset=0.25,tbarsize=0.7 5,bracketlength=0.15,rbracketlength=0.15}
\psset{xunit=.5pt,yunit=.5pt,runit=.5pt}
\begin{pspicture}(0,250)(600,670)

{\pscustom{\newpath \moveto(130,480)
\curveto(130,480)(130,440)(90,380)
\curveto(72,353)(30,275)(30,275)
\curveto(30,275)(196,290)(280,290)
\curveto(370,289)(550,270)(550,270)
\lineto(600,470)
\curveto(600,470)(440,460)(360,460)
\curveto(284,460)(130,480)(130,480)
\closepath}}

{\pscustom[fillstyle=solid,fillcolor=white,opacity=0.9]
{\newpath
\moveto(210,470)
\curveto(168,466)(120,450)(90,440)
\curveto(67,432)(44,410)(50,390)
\curveto(66,323)(173,321)(240,310)
\curveto(302,299)(434,312)(490,340)
\curveto(510,350)(570,400)(560,430)
\curveto(550,460)(485,470)(460,480)
\curveto(435,490)(293,480)(210,470)
\closepath}}
{\pscustom[linewidth=1,linecolor=black,fillstyle=solid,fillcolor=gray,opacity=1]
{\newpath \moveto(370,450)
\curveto(320,462)(208,452)(160,440)\curveto(120,430)(100,390)(120,370)
\curveto(150,340)(337,306)(450,350)\curveto(580,400)(410,440)(370,450)
\closepath }}
\rput(210,360){\large $K_2$}
{\pscustom[linewidth=1,linecolor=black,fillstyle=solid,fillcolor=white]
{\newpath
\moveto(280,410)\curveto(258,402)(253,386)(260,380)
\curveto(270,370)(360,380)(360,400)\curveto(360,426)(296,415)(280,410)
\closepath}}
{\pscustom[fillstyle=solid,fillcolor=white,opacity=0.7]
{\newpath
\moveto(280,400)
\curveto(300,400)(300,470)(300,500)
\curveto(300,520)(300,560)(290,600)
\curveto(290,590)(330,590)(330,600)
\curveto(320,570)(320,520)(320,500)
\curveto(320,490)(320,410)(340,400)
}}

\psbezier(290,600)(290,610)(330,610)(330,600)

\psline[linestyle=dashed,dash=1 2](310,390)(310,592)

\psline{->}(310,592)(310,645)
\psline(310,645)(310,680)
\rput(90,295){$W^s_{loc}(P,g)$}
\rput(470,640){$W^s(P,g)$}
\psline[linewidth=0.3](460,625)(338,600)
\psline[linewidth=0.3](475,625)(450,490)
\end{pspicture}
\caption{The compact $K_2$ may stay in the stable manifold for a perturbation $g$ of $f$ without remaining in the local stable manifold. The stronger conclusions of \cref{t.mainsimplestatementmoregeneral} avoid such picture.}
\end{figure}

Let us give examples of applications of \cref{t.mainsimplestatement,t.mainsimplestatementmoregeneral}. We already knew that the derivative along a saddle of large period may be perturbed in order to get real eigenvalues~\cite{BoCro, BGV}, or that the derivative along a long-period saddle with a weak stable/unstable dominated splitting may be perturbed in order to get a small stable/unstable angle~\cite{BDV}.  In \cref{s.pathcocycles} we show that these perturbations can be obtained through 'good' paths of cocycles, in the sense that one can apply \cref{t.mainsimplestatementmoregeneral} to them. As a consequence, if the period of a saddle is large, then it is possible to perturb it to make the eigenvalues of the first return map real, while preserving their moduli, and preserving the local strong stable and unstable manifolds outside a small neighbourhood:

\begin{theorem}
\label{t.realeigen}
Let $f$ be a diffeomorphism of $M$ and $\epsilon>0$ be a real number. There exists an integer $N\in \NN$ such that for any
\begin{itemize}
\item periodic point $P$ of period $p\geq N$,
\item neighbourhood $U$ of the orbit $\Orb_P$ of $P$, 
\item number $\varrho>0$ and families of compact sets 
\begin{align*}K_i&\subset W^{i,ss}_\varrho(P,f)\setminus \Orb_P, \quad \mbox{ for all }  i\in I\\
 L_j&\subset W^{j,uu}_\varrho(P,f)\setminus \Orb_P,  \quad  \mbox{ for all } j\in J,
 \end{align*} 
 where $I,J$  are the sets of strong stable and unstable dimensions, respectively,
 \end{itemize}
there is a $C^1$-$\epsilon$-perturbation $g$ of $f$ such that
\begin{itemize}
\item  $f^{\pm 1}=g^{\pm 1}$ throughout $\Orb_P$ and outside $U$, 
\item the eigenvalues of the first return map $Dg^p(P)$ are real and their moduli are the same as for $f$,
\item  for all $(i,j)\in I\times J$, we have
 $$K_i\subset W^{i,ss}_\varrho(P,g)\mbox{ and }L_j\subset W^{j,uu}_\varrho(P,g).$$
\end{itemize}
\end{theorem}

Finally, we prove a generic dichotomy between small stable/unstable angles and a weak form of hyperbolicity. Before stating it precisely, we need a few definitions:

A {\em residual} subset of a Baire space is a set that contains a countable intersection of open and dense subsets. 

A {\em saddle point} for a diffeomorphism is a hyperbolic periodic point that has non-trivial stable and unstable manifolds. The {\em index} of a saddle is the dimension of its stable manifold. The {\em stable (resp. unstable) direction} of a saddle $P$ is the tangent vector space to the stable (resp. unstable) manifold at $P$. The {\em minimum stable/unstable angle} of a saddle $P$ is the minimum of the angles between a vector of the stable direction of $P$ and a vector of the unstable direction. 

We say that a saddle point $P$ is {\em homoclinically related} to another saddle point $Q$ if and only if the unstable manifold $W^u(P)$ of the orbit of $P$ (resp. $W^u(Q)$) intersects transversally the stable manifold $W^s(Q)$  (resp. $W^s(P)$) . The {\em homoclinlic class} of a saddle point $P$ is the closure of the transverse intersections of $W^s(P)$ and $W^u(P)$. One easily shows that it also is the closure of the set of saddles homoclinically related to $P$.

A {\em dominated splitting} above a compact invariant set $K$ for a diffeomorphism $f$ is a splitting of the tangent bundle $TM_{|K}=E\oplus F$ into two vector subbundles such that the vectors of $E$ are uniformly exponentially more contracted or less expanded than the vectors of $F$ by the iterates of the dynamics (see definition~\ref{d.dominatedsplitting}). The {\em index} of that dominated splitting is the dimension of $E$. 

For all $1\leq r \leq \infty$, we denote by $\Diff^r(M)$ the space of $C^r$ diffeormorphisms.

\begin{theorem}
\label{t.dichotomysimple}
There exists a residual set $\cR\subset \Diff^1(M)$ of diffeomorphisms $f$ such that for any saddle point $P$ of $f$, we have the following dichotomy:
\begin{itemize}
\item either the homoclinic class  $H(P,f)$ of $P$ admits a dominated splitting of same index as $P$
\item or, for all $\epsilon>0$, there is a saddle point $Q_\epsilon$ homoclinically related to $P$ such that it holds:
\begin{itemize}
\item the minimum stable/unstable angle of
 $Q_\epsilon$ is less than $\epsilon$,
\item the eigenvalues of the derivative of the first return map at $Q_\epsilon$ are all real and pairwise distinct,
\item each of these eigenvalues has modulus less than $\epsilon$ or greater than $\epsilon^{-1}$.
\end{itemize}
\end{itemize}
\end{theorem}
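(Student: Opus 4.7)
The plan is to derive the theorem from Theorem~\ref{t.mainsimplestatement} by a standard Baire category argument, together with the fact that in the absence of a dominated splitting on a homoclinic class one finds saddles of that class with arbitrarily weak dominated splittings.

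First I would set up the residual set. Let $(\epsilon_k)_{k\in\NN}$ tend to zero. For each $(N,k)\in\NN^2$ define $\cO_{N,k}\subset\Diff^1(M)$ to consist of those $f$ such that for every saddle $x$ of $f$ of period at most $N$, either $H(x,f)$ admits a dominated splitting of index equal to the index of $x$, or $H(x,f)$ contains a saddle $y$ of the same index as $x$ satisfying the three bulleted conditions with parameter $\epsilon_k$. Standard $C^1$-generic ingredients---persistence of hyperbolic periodic orbits and continuity of their derivatives and invariant subspaces, lower semicontinuity of the homoclinic class, and semicontinuity of the existence of a dominated splitting---show that each $\cO_{N,k}$ contains an open dense subset of a residual set once density is established, so that $\cR:=\bigcap_{N,k}\cO_{N,k}$ is residual and witnesses the theorem.

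To prove density of $\cO_{N,k}$, fix $f$ and a saddle $x$ of period at most $N$ whose homoclinic class has no dominated splitting of index equal to that of $x$; otherwise there is nothing to do. By the cocycle arguments of Wen~\cite{W1} and~\cite{BGV}, for every $\eta>0$ one finds a saddle $y\in H(x,f)$ of the same index as $x$ whose first return cocycle admits at best an $\eta$-weak dominated splitting of that index, and moreover $y$ can be chosen transversely homoclinically related to the continuation of $x$ at prescribed transverse intersection points. The pathwise perturbation results of section~\ref{s.pathcocycles} then build, when the $\eta$-weak domination is sufficiently degenerate, a path $\{A_{j,t}\}_{t\in[0,1]}$ in $GL(d,\RR)$ starting at the derivative cocycle along the orbit of $y$ and ending at a cocycle whose first return matrix has real, pairwise distinct eigenvalues of moduli $<\epsilon_k$ or $>\epsilon_k^{-1}$ and whose stable/unstable angle is less than $\epsilon_k$, arranged so that along the entire path the product $\prod_j A_{j,t}$ admits the same $i$-dimensional strong stable and the same $(d-i)$-dimensional strong unstable direction, where $i$ is the index of $y$. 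Feeding this path into Theorem~\ref{t.mainsimplestatement} with a neighbourhood $\cU$ of the orbit of $y$ disjoint from the orbit of $x$ and from the prescribed homoclinic intersections yields a $C^1$-small perturbation $g$ of $f$, equal to $f$ outside $\cU$, realising the endpoint cocycle at $y$ and preserving the local strong stable and unstable manifolds of $y$ outside $\cU$. The preserved transverse homoclinic points force $y$ to remain transversely homoclinically related to the continuation $x_g$ of $x$ for $g$, so $y\in H(x_g,g)$ and $g\in\cO_{N,k}$.

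The main obstacle is making the drastic perturbation of the derivative at $y$---producing simultaneously small angle, real distinct eigenvalues, and extreme moduli---while keeping $y$ inside the homoclinic class of the continuation of $x$. Theorem~\ref{t.mainsimplestatement} is precisely the tool for this, but only when the path of cocycles carries the strong stable and strong unstable directions of the correct dimensions throughout. Checking that the constructions of section~\ref{s.pathcocycles} can be chained into a single path along which the correct index of partial hyperbolicity is never lost at any intermediate time is the technical heart of the argument, and is where the degeneracy afforded by the $\eta$-weak domination at $y$ is used.
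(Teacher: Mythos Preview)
Your proposal is correct and follows essentially the same route as the paper. The paper packages the step ``build a good path of cocycles and apply Theorem~\ref{t.mainsimplestatement}'' into Proposition~\ref{p.smallanglediffeo}, whereas you unpack it inline; otherwise the Baire argument (intersection over periods and a sequence $\epsilon_k\to 0$, find a saddle $y$ with weak domination in the class, perturb while preserving the homoclinic relation) is the same. One point where the paper is more explicit than your sketch: the set $\cO_{N,k}$ is not obviously open, because ``$H(x,f)$ admits a dominated splitting'' is neither open nor closed; the paper handles this by working inside a Kupka--Smale neighbourhood $\cU$ and taking the open set $\Delta_\epsilon\cup\operatorname{int}(\cU\setminus\Delta)$, where $\Delta$ is the set without dominated splitting and $\Delta_\epsilon$ is the (open) set where the good saddle $y$ already exists.
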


That result parallels~\cite[Theorem 1.1]{Gou}. Indeed, if these three conditions are satisfied for small $\epsilon$, then there are fundamental domains of the stable and unstable manifolds of $Q$ that are big before the distance that separates them, in such a way that these two manifolds can be intertwined by small perturbations. In particular, it is possible to create  tangencies between them by  small perturbations that keep $Q$ in the homoclinic class of $P$. 

We finally give a version of~\cite[Theorem 4.3]{Gou} where the derivative is preserved, that is, we show that if the stable/unstable dominated splitting along a saddle is weak and if the period of that saddle is large, then one obtains homoclinic tangency related to that saddle by a $C^1$-perturbation that preserves the orbit of the saddle and the derivative along it. Moreover, one may keep any preliminarily fixed finite set in the invariant manifolds of the saddle. 

\subsection{Further applications of \cref{t.mainsimplestatement}}
Using Theorem~\ref{t.mainsimplestatement}, Rafael Potrie~\cite{Po} got interesting results on generic Lyapunov stable and bi-stable homoclinic classes. He showed in particular that, $C^1$-generically, if $H$ is a quasi-attractor containing a dissipative periodic point, then it admits a dominated splitting. Also, using a result by Yang~\cite{Y}, he proves that generically  and far from homoclinic tangencies if a homoclinic class is bi-Lyapunov stable (that is, the homoclinic classes that are quasi-attractors and repellors) then it is the whole manifold.

The main theorem of this paper was followed by another result by Bonatti and Bochi~\cite{BoBo} that generalized previous results about perturbation of derivatives along periodic points in $C^1$-topology \cite{M1,BDP,BGV}. More precisely, given a tuple of matrices $\cA=(A_1,...,A_p)$, they give a full description of the tuples of moduli of eigenvalues of the product $B=A_p...A_1$ (equivalently, of Lyapunov exponents) that one can reach by small isotopic perturbations of $\cA$. Moreover, they prove that if strong stable or unstable direction of some dimensions exist at both the initial and final time, then the isotopy $\cA_t$ can be built so that at all times of the isotopy there are strong stable and unstable directions of those dimensions. In other words, the isotopy matches the hypotheses of \cref{t.mainsimplestatement}.

\cref{t.mainsimplestatement} and \cite[Theorem 4.1]{BoBo} thus give a very general method to perturb derivatives inside homoclinic classes, to preserve strong connections and to create new ones. This led recently to a number of developments in the study of $C^1$-generic dynamical systems. Let us detail the most important ones.

In~\cite{BCDG}, Bonatti, Crovisier, D\'iaz and Gourmelon showed a number of generic results on homoclinic classes and produced new examples of wild dynamics. 
In particular, they showed that if a homoclinic class has no dominated splitting and if $C^1$-robustly it contains two saddle points of different indices, then it induces a particular type of wild dynamics, called "viral". Indeed, such a homoclinic class has a replication property: there exists an arbitrarily small $C^1$-perturbation of the dynamics such that there is a new homoclinic class Hausdorff close to the continuation of the first one, but not in the same chain-recurrent class,\footnote{An $\epsilon$-pseudo orbit is a sequence $x_1,...,x_n$ such that $\dist(f(x_i),x_{i+1})<\epsilon$, for all $i$. Two points $x\neq y$ are in the same chain-recurrent class, if for any $\epsilon>0$ there is an $\epsilon$-pseudo orbit that goes from $x$ to $y$ and another that goes from $y$ to $x$. }  and such that that new homoclinic class satisfies the same properties. 

This implies that there is a locally residual set of diffeomorphisms that have uncountably many chain-recurrent classes. By Kupka-Smale's theorem, uncountably many of those chain-recurrent classes have no periodic orbits, that is, are {\em aperiodic}. 

A question since the first production of examples of locally generic dynamics with aperiodic chain-recurrent classes (by Bonatti and D\'iaz~\cite{BD:02}), was whether such aperiodic classes could generically have non-trivial dynamics. It was not known if there could exist locally generic dynamics where aperiodic classes were not all minimal, or had non-zero Lyapunov exponents.

Recently, using (among other ideas) an extension of \cref{t.mainsimplestatement} in dimension $3$, namely the result announced in \cref{s.furtherresults}, using \cite[Theorem 4.1 and Proposition 3.1]{BoBo}, and pushing further the ideas of~\cite{BCDG} and~\cite{BLY}, Bonatti and Shinohara have announced that they can produce open sets of diffeomorphism where  generic diffeomorphisms admit  uncountably many non-minimal chain-recurrent classes.

Moreover, Bonatti, Crovisier and Shinohara~\cite{BCS} proved recently that those techniques can also be used to find a $C^1$-generic counter example to Pesin's theory, thus generalizing the result of Pugh~\cite{Pu84}: for $\dim(M)\geq 3$, they exhibit open sets of $\Diff^1(M)$ in which generic diffeomorphisms admit non-uniformly hyperbolic invariant measures supported by aperiodic chain-recurrent classes that have trivial stable and unstable manifolds.

\subsection{Statement of the Main Perturbation Proposition.}\label{s.mainpertpropos}
We state the main results that lead to \cref{t.mainsimplestatement}. These are perturbation results that hold in $C^r$-topology, for all $1\leq r \leq \infty$, although we only use their $C^1$-versions to prove \cref{t.mainsimplestatement}. The $C^r$ results may be of great interest in other contexts.

We recall that $M$ is a Riemannian manifold, not necessarily compact. We consider the space of diffeomorphisms $\Diff^r(M)$ endowed with the {\em weak Whitney $C^r$-topology}\footnote{See e.g.\ \cite{Hirsch}.}, that is the topology of $C^r$ convergence on compact sets. Notice that if $M$ is compact, this coincides with the usual uniform $C^r$-topology.

While the diffeomorphisms $f\in \Diff^r(M)$ we will consider may vary, they will all coincide throughout the orbit $\Orb_P$ of some common periodic point $P$, and all stable or unstable manifolds of this paper will be those of that orbit. Thus we can unambiguously denote the stable and unstable manifolds of the orbit $\Orb_P$ of $P$ for $f$ simply by $W^s(f)$ and $W^u(f)$. Likewise, we denote
 the $i$-strong stable/unstable manifolds of $\Orb_P$ for $f$ simply by $W^{ss,i}(f)$/$W^{uu,i}(f)$. 
 
Let $P$ be a $p$-periodic point for a diffeomorphism $f$ such that it admits an $i$-strong stable manifold. The same way we defined local stable and unstable manifolds, we may define local strong stable and unstable manifolds:

\begin{definition}
A set $W^{+}(f)$ is a {\em local $i$-strong stable manifold} for $f$ if 
it is a strictly $f$-invariant union of disjoint disks $\{D_{n}\}_{0\leq n<p}$, 
where each $D_{n}$ is a smooth ball inside the strong stable manifold $W^{ss,i}(f)$ and $f^n(P)$ is in the interior of $D_{n}$.
\end{definition}

We define symmetrically a set $W^{-}(f)$ to be a {\em local $j$-strong unstable manifold} for $f$ if it is a local $j$-strong stable manifold for $f^{-1}$. We say that a sequence of compact submanifolds $N_k$ in $M$ (possibly with boundary) {\em $C^r$-converges} to a $C^r$-submanifold $N\subset M$ if there exists a sequence of $C^r$-maps $\phi_k\colon N\to M$ that $C^r$-converges to $\Id_N$ such that $\phi_k(N)=N_k$.  Now, we can do the following:

\begin{remark}
Let $P$ be a periodic point for $f$ and $f_k$ be a sequence in $\Diff^r(M)$ that converges to $f$ for the weak Whitney topology, where each $f_k$ coincides with $f$ throughout $\Orb_P$. Then, by the stable manifold theorem, for any local strong stable manifold $W^+(f)$ there is a sequence of local strong stable manifolds  $W^+(f_k)$ that converges to it $C^r$-uniformly. And symmetrically for local strong unstable manifolds.
\end{remark}

%
%
%
%

\begin{proposition}[Main perturbation proposition]\label{p.pertpropsimple}
Fix $1\leq r \leq \infty$. Let $g_k$ and $h_k$ be two sequences in $\Diff^r(M)$ converging to a diffeomorphism $f$, such that $f$, $g_k$ and $h_k$ coincide throughout the orbit $\Orb_P$ of a periodic point $P$. Let $\{W^+(h_k)\}_{k\in\NN}$ be a sequence  of local strong stable manifolds of $\Orb_P$ for the diffeomorphisms $h_k$ that converges to a local strong stable manifold $W^+(f)$ for $f$,  $C^r$-uniformly. Define symmetrically local strong unstable manifolds $W^-(h_k)$ and $W^-(f)$.

For any neighborhood $U$ of the orbit $\Orb_P$, there exists:
\begin{itemize}
\item a neighborhood $V\subset U$ of $\Orb_P$,
\item a sequence $f_k$ of $\Diff^r(M)$ converging to $f$,
\item two sequences of local strong stable and unstable manifolds $W^+(f_k)$ and $W^-(f_k)$ of $\Orb_P$ that tend respectively to $W^+(f)$ and $W^-(f)$, in the $C^r$ topology,
\end{itemize}
such that for large $k\in \NN$ it holds:
\begin{itemize}
\item $f_k^{\pm 1}=g_k^{\pm 1}$ inside $V$
\item $f_k^{\pm 1}=h_k^{\pm 1}$ outside $U$,
\item For any integer $i>0$, if $\Orb_P$ has an $i$-strong stable manifold $W^{ss,i}(f)$ for $f$, then $W^{ss,i}(f_k)$ and $W^{ss,i}(h_k)$ also exist and coincide "semilocally outside $U$", i.e.
$$\left[W^+(f_k)\cap W^{ss,i}(f_k)\right]\setminus U=\left[W^+(h_k)\cap W^{ss,i}(h_k)\right]\setminus U,$$
and likewise, replacing stable manifolds by unstable ones. 
\end{itemize}
\end{proposition}

\begin{corollary}[$C^r$-linearization lemma]\label{c.linearisationCr}
Let $1\leq r\leq \infty$. Let $P$ be a periodic hyperbolic point of a diffeormophism  $f\in \Diff^r(M)$ and let $W^+(f)$ and $W^-(f)$ be respectively local stable and unstable manifolds of the orbit $\Orb_P$. Let $U$ be a neighborhood of $\Orb_P$. Then, there exists a sequence $f_k$ tending to $f$ in $\Diff^r(M)$ and two sequences of local strong stable and unstable manifolds $W^+(f_k)$ and $W^-(f_k)$ of $\Orb_P$ such that it holds, for all $k\in \NN$:
\begin{itemize}
\item $f^{\pm 1}=f_k^{\pm 1}$ throughout $\Orb_P$ and outside $U$,
\item $P$ is a hyperbolic point for $f_k$ and the linear part of $f_k^{p}$ at $P$ has no resonances, where $p$ is the period of $P$. In particular, $f_k$ is locally $C^r$-conjugated to its linear part along the orbit of $P$.
\item For any integer $i>0$, if $\Orb_P$ has an $i$-strong stable manifold $W^{ss,i}(f)$ for $f$, then $W^{ss,i}(f_k)$ also exists and
$$\left[W^+(f)\cap W^{ss,i}(f)\right]\setminus U=\left[W^+(f_k)\cap W^{ss,i}(f_k)\right]\setminus U,$$
and likewise, replacing stable manifolds by unstable ones. \end{itemize}
\end{corollary}

In the $C^1$ setting, we have a stronger statement:

\begin{corollary}[$C^1$-linearization lemma]\label{c.linearisationC1}
Let $P$ be a periodic point of a diffeormophism  $f\in \Diff^1(M)$ and let $W^+(f)$ and $W^-(f)$ be respectively local stable and unstable manifolds of $\Orb_P$ and fix a linear structure on a neighborhood of each point of $\Orb_P$. Let $U$ be a neighborhood of $\Orb_P$. Then, there exist a sequence $f_k$ tending to $f$ in $\Diff^1(M)$ and two sequences of local stable and unstable manifolds $W^+(f_k)$ and $W^-(f_k)$ such that it holds, for all $k\in \NN$:
\begin{itemize}
\item $f^{\pm 1}=f_k^{\pm 1}$ throughout $\Orb_P$ and outside $U$, 
\item $f_k$ coincides on a neighborhood of $\Orb_P$ with the linear part $L$ of $f$ along $\Orb_P$,
\item For any integer $i>0$, if $\Orb_P$ has an $i$-strong stable manifold $W^{ss,i}(f)$ for $f$, then $W^{ss,i}(f_k)$ also exists and
$$\left[W^+(f)\cap W^{ss,i}(f)\right]\setminus U=\left[W^+(f_k)\cap W^{ss,i}(f_k)\right]\setminus U,$$
and likewise, replacing stable manifolds by unstable ones. \end{itemize}
\end{corollary}

\cref{p.pertpropsimple} is proved in \cref{s.prop4}. The linearization lemmas are straightforward consequences of \cref{p.pertpropsimple}: use a partition of unity to build a sequence $g_k$ of diffeomorphisms that tends $C^r$ to $f$, such that the linear part of $g_k^{p}$ at $P$ has no resonances (for the proof of \cref{c.linearisationCr}), or such that $g_k^{\pm 1}=L^{\pm 1}$ (for the proof of \cref{c.linearisationC1}) on a neighborhood of the orbit of $P$, where $L$ is the linear part of $f$ along $\Orb_P$, and apply \cref{p.pertpropsimple} with $h_k=f$. In \cref{c.linearisationCr}, the fact that $g_k$ is locally $C^r$-conjugated to its linear part along the orbit of $P$ comes from the Sternberg linearization theorem (see~\cite[Theorem~6.6.6]{KH}).

\subsection{Structure of the paper}
In \cref{s.isotopic}, we prove \cref{t.mainsimplestatement} from \cref{p.pertpropsimple} and \cref{c.linearisationC1}. 
 \cref{p.pertpropsimple} is proved in \cref{s.prop4}. 
 
Finally in \cref{s.consequences} we prove a few of the many consequences of Theorem~\ref{t.mainsimplestatement} for perturbative dynamics of $C^1$ diffeomorphisms. In particular, we prove \cref{t.realeigen,t.dichotomysimple}.

\bigskip

For simplicity, in the rest of the paper, the sentences 
\begin{center}
"For large $k$, property $\cP_k$ holds."

"For small $\lambda>0$, property $\cQ_\lambda$ holds."
\end{center}
\noindent respectively stand for
\begin{center}
 "There exists $k_0\in \NN$ such that, for any integer $k\geq k_0$, property $\cP_k$ holds."

 "There exists $\lambda_0>0$ such that, for any real number $0<\lambda\leq \lambda_0$, property $\cQ_\lambda$ holds." 
\end{center}

\section{Proof of the Isotopic Franks' lemma.}\label{s.isotopic}

In this section, we prove \cref{t.mainsimplestatement} from \cref{p.pertpropsimple} and \cref{c.linearisationC1}.

\begin{proof}[Idea of the proof]
We first put a linear structure on a neighborhood of $\Orb_P$, so that any sequence $\cA_t$ of linear maps as in the statement of \cref{t.mainsimplestatement} identifies to a linear diffeomorphism on the manifold $\ZZ/p\ZZ \times \RR^d$. We will call such diffeomorphisms {\em linear cycle} since they actually do a cyclic permutation on the connected components of $\ZZ/p\ZZ\times \RR^d$.
Then we introduce the notion of "connection" from such a diffeomorphism $\cA$ to another $\cB$, that is, a diffeomorphism of $\ZZ/p\ZZ\times \RR^d$ such that
\begin{itemize}
\item it coincides with $\cB$ on a neighborhood of $\Orb_P$ and with $\cA$ outside a bigger neighborhood, 
\item it "connects" the strong stable/unstable manifolds of $\cA$ with those of $\cB$, as represented in \cref{p.connection}.
\end{itemize}
Those connections can be concatenated as in \cref{p.concatenation} (we may however need to conjugate some of them by homothecies).

As a consequence of \cref{p.pertpropsimple}, if $\cA_t$ is a path of such linear diffeomorphisms along which strong stable and strong unstable manifolds of dimensions $i\in I$ and $j\in J$ exist,  then we will find a sequence $0=t_0<t_1<...<t_k=1$ of times such that there is a connection from each $\cA_{t_i}$ to $\cA_{t_{i+1}}$ of small size (that is, $C^1$-close to the linear cycle $\cA_{t_i}$) that connects the $I$-strong stable and $J$-strong unstable manifolds of $\cA_{t_i}$ to those $\cA_{t_{i+1}}$. Then a convenient concatenation of those connections (see \cref{p.concatenation}) will give a connection from $\cA_0$ to $\cA_1$ whose distance to $\cA_0$ will be arbitrarily close to the radius of the path $\cA_t$, as defined in  \cref{t.mainsimplestatement}.

We will end the proof by linearizing $f$ to $\cA_0$ on a neighborhood $U$ of $\Orb_P$ with \cref{c.linearisationC1}, and finally pasting in $U$ that connection from $\cA_0$ to $\cA_1$. 
\end{proof}

We now go into the details of the proof, starting with a precise definition of the metrics we deal with.

\subsection{Preliminaries}
Any Riemannian metric $\|.\|_*$ on a manifold $M$ induces a distance $d_{\|.\|_*}$ on $TM$ through the Levi-Civita connection. 
We define the corresponding $C^1$-distance between two diffeomorphisms $g,h\colon M \to M$ as follows:
$$\dist_{\|.\|_*}(g,h)=\sup_{v\in TM\atop \|v\|_*=1}\biggl\{d_{\|.\|_*}\bigl[Dg(v),Dh(v)\bigr],d_{\|.\|_*}\bigl[Dg^{-1}(v),Dh^{-1}(v)\bigr]\biggr\}.$$
We say that a diffeomorphism $g$ of $M$ is {\em bounded by $C>1$ for $\|.\|_*$} if for all unit vector $v\in TM$, we have $C^{-1}\leq \|Df(v)\|_*\leq C$.

We will deal with diffeomorphisms of the following $d$-dimensional manifold (and vector bundle):
$$\cE=\ZZ/p\ZZ\times \RR^d.$$
 A {\em cyclic diffeomorphism $g$} of $\cE$ is a diffeomorphism of $\cE$ such that for each integer $1\leq n<p$, 
\begin{align*}
g(\{n\}\times \RR^d)&=\{n+1\}\times \RR^d\\
g(0_\cE)&=0_\cE,
\end{align*}
where $0_\cE$ is the $0$-section of the vector bundle $\cE$.

Whenever they exist, we denote the $i$-strong stable and unstable manifolds of the periodic orbit $0_\cE$ of $g$ by $W^{ss,i}(g)$ and $W^{uu,i}(g)$, respectively.

A cyclic diffeomorphism of $\cE$  that restricts to a linear map from $\{n\}\times \RR^d$ to $\{n+1\}\times \RR^d$, for all $n$, is called a {\em linear cycle}. Denote by $\mathfrak{A}$ the set of linear cycles of $\cE$. Notice that $\mathfrak{A}$ identifies to the set of tuples $(A_1,...,A_p)\in GL(d,\RR)^p$.

We endow the manifold $\cE$ with the Riemannian metric $\|.\|_{\mbox{Eucl.}}$ arising from the canonical Euclidean structure on $\RR^d$.

The $C^1$-distance $\dist_{\|.\|_{\mbox{Eucl.}}}(\cA,\cB)$ between two distinct linear cycles $\cA,\cB\in \mathfrak{A}$, seen as diffeomorphisms of $\cE$, is infinite.
We endow $\mathfrak{A}$ with the more appropriate operator distance: given $\cA=(A_1,...,A_p)$ and $\cB=(B_1,...,B_p)$ in $\mathfrak{A}$, let 
\begin{align*}
\dist_{\mathfrak{A}}(\cA,\cB)=\max_{1\leq n\leq p}\bigl\{\|A_n-B_n\|, \|A_n^{-1}-B_n^{-1}\|\bigr\},
\end{align*}
where $\|.\|$ is the operator norm. For any finite sets $I$ and $J$ of strictly positive integers, denote by
\begin{align*}
\mathfrak{A}_{I,J}\subset \mathfrak{A}
\end{align*} the subset of tuples $(A_{1},\ldots,A_{p})\in \mathfrak{A}$ such that the endomorphism $B=A_{p}\circ ... \circ A_{1}$ has an $i$-strong stable direction and a $j$-strong unstable direction, for all $i\in I$ and $j\in J$.

\subsection{Connections from a linear cycle to another.}

\begin{definition}
Given two linear cycles $\cA,\cB\in \mathfrak{A}_{I,J}$, an {\em $(I,J)$-connection $C_{\cA\cB}$ from $\cA$ to $\cB$} is a cyclic diffeomorphism on $\cE$ such that 
\begin{itemize}
\item $C_{\cA\cB}^{\pm 1}=\cA^{\pm 1}\mbox{ outside a bounded neighborhood of $0_\cE$},$ 
\item $C_{\cA\cB}^{\pm 1}=\cB^{\pm 1}\mbox{ on a neighborhood of $0_\cE$},$
\item there exists a neighborhood $U$ of $0_\cE$ such that the strong stable/unstable manifolds of the periodic orbit $\Orb_P=0_\cE$ for $C_{\cA\cB}$ and $\cA$ coincide outside $U$, that is, for all $i\in I$ and $j\in J$ it holds:
\begin{align*}
W^{ss,i}(\cA)\setminus U&=W^{ss,i}(C_{\cA\cB})\setminus U,\\
W^{uu,j}(\cA)\setminus U&=W^{uu,j}(C_{\cA\cB})\setminus U.
\end{align*}
\end{itemize}
\end{definition}

\begin{remark}\label{r.proper immersion}
The manifolds $W^{ss,i}(C_{\cA\cB})$ and $W^{uu,j}(C_{\cA\cB})$ are then properly immersed in $\cE$. In particular, the negative iterates of any point of  $W^{ss,i}(C_{\cA\cB})$ and the positive iterates of any point of  $W^{uu,i}(C_{\cA\cB})$ tend to infinity.
\end{remark}

\begin{figure}[hbt] 
\ifx\JPicScale\undefined\def\JPicScale{1}\fi
\psset{linewidth=0.011,dotsep=1,hatchwidth=0.3,hatchsep=1.5,shadowsize=0,dimen=middle}
\psset{dotsize=0.1 2.5,dotscale=1 1,fillcolor=black}
\psset{arrowsize=0.17 2,arrowlength=1,arrowinset=0.25,tbarsize=0.7 5,bracketlength=0.15,rbracketlength=0.15}
\psset{xunit=.375pt,yunit=.290pt,runit=.325pt}
\begin{pspicture}(0,450)(700,800)

{\pscustom[linewidth=1,linecolor=black,fillstyle=solid,fillcolor=lightgray,opacity=1]
{\newpath\moveto(404.66271961,807.13292936)
\curveto(376.04996481,840.88627171)(325.16950605,836.63862931)(289.17248185,818.26997942)
\curveto(217.86596661,782.54966282)(160.71502647,713.86762563)(145.8536995,634.71344846)
\curveto(135.25868902,580.92641969)(160.73681693,524.22856443)(205.91109676,493.9223568)
\curveto(285.56364066,439.15668389)(392.46477366,424.86964647)(483.64315824,457.13355891)
\curveto(525.74046706,471.79976812)(570.10664183,507.75528318)(565.81165852,556.85241065)
\curveto(561.8447513,591.57079837)(537.63211722,619.35382667)(521.02303586,648.88180941)
\curveto(485.09950761,703.51556697)(448.72277334,758.62766863)(404.66271961,807.13292936)
\closepath}}

{\pscustom[linewidth=1,linecolor=black,fillstyle=solid,fillcolor=white,opacity=1]
{\newpath\moveto(385.69799961,708.94769936)
\curveto(350.08683576,718.01521442)(306.71552825,717.65869953)(282.48331784,677.17166076)
\curveto(253.3261646,644.22750459)(280.40229059,607.12037405)(313.94642459,584.65770724)
\curveto(339.07704789,562.41255047)(374.78603848,537.53578227)(409.68788295,550.7106263)
\curveto(448.56762661,569.10586097)(446.89433626,610.02186718)(435.17320472,640.27167335)
\curveto(435.59919878,668.02648353)(419.39800568,707.39344747)(385.69799961,708.94769936)
\closepath}}

\psline{<->}(324.77707001,595.11885216)(407.56768061,663.98419136)
\psline(312.77707001,584.51885216)(407.56768061,663.98419136)

{\pscustom[linewidth=1,linecolor=black]
{\newpath\moveto(407.56768061,663.98419136)
\curveto(427.00134509,681.12144503)(461.83593158,698.66364379)(481.45143231,672.11999076)
\moveto(481.45143231,672.11999076)
\curveto(496.78602184,640.1838167)(532.60707574,646.18229266)(560.66593864,654.09654128)
\curveto(603.63847672,662.16487749)(646.56764388,670.4625773)(689.49121772,678.78681364)}}

\psline(287.02311121,683.19374056)(419.83258701,558.74839866)
\psline{>-<}(297.02311121,673.19374056)(409.83258701,568.74839866)


{\pscustom[linewidth=1,linecolor=black]
{\newpath\moveto(419.83258701,558.74839866)
\curveto(441.56690219,533.04161974)(476.88006248,531.40389279)(507.18488204,539.66208378)
\curveto(530.45132887,543.05662031)(551.38628039,534.60332551)(571.54685491,524.08917333)
\lineto(705.02833161,463.94129336)}}

{\pscustom[linewidth=1,linecolor=black]
{\newpath\moveto(287.02311121,683.19374056)
\curveto(267.67505917,703.90996833)(235.75639908,723.85671236)(208.5216749,704.59903501)
\curveto(181.79371329,683.70869447)(154.28138029,713.53196922)(128.29230187,721.55623514)
\curveto(83.14172661,742.55643871)(38.04292791,763.66782103)(-7.04292039,784.80657936)}}

{\pscustom[linewidth=1,linecolor=black]
{\newpath\moveto(312.77707001,584.51885216)
\curveto(291.92105714,563.61296892)(257.39797229,547.26862913)(233.09245261,573.12721065)
\curveto(207.03787083,588.69137898)(174.45293053,588.38937296)(145.08202451,586.35507116)
\lineto(-9.49121239,561.21318536)}}

\rput(350,500){$U$}
\rput(620,790){\small $C_{\cA\cB}=\cA$}
\rput(360,598){\Small $0_\cE$}
\rput(350,682){\SMALL $C_{\cA\cB}\!=\!\cB$}
\rput(10,530){\Small $W^{u}(\cA)\!\setminus \!U=W^{u}(C_{\cA\cB})\!\setminus \!U$}
\rput(10,800){\SMALL $W^{s}(\cA)\!\setminus \!U=W^{s}(C_{\cA\cB})\!\setminus \!U$}

\end{pspicture}
\caption{A connection from a linear cycle $\cA$ to another $\cB$.}
\label{p.connection}
\end{figure}

Notice that, since a connection $C_{\cA\cB}$ from a linear cycle $\cA$ to another $\cB$ coincides with $\cA$ outside a compact set, the $C^1$-distance between $\cA$ and $C_{\cA\cB}$ is bounded. We define the {\em size} of the connection $C_{\cA\cB}$ as the $C^1$-distance for the Euclidean metric:
\begin{align*}
\size(C_{\cA\cB})=\dist_{\|.\|_{\!\mbox{\scriptsize Eucl.}}}(\cA,C_{\cA\cB}).
\end{align*}

For $\lambda>0$, denote by $\lambda.\Id$ the homothety of ratio $\lambda$ on $\cE$, that is, the diffeomorphism of $\cE$ defined by $\lambda.\Id(n,x)=(n,\lambda.x)$. Given a connection $C_{\cA\cB}$ from $\cA$ to $\cB$, we denote by $C^\lambda_{\cA\cB}$ its conjugate by $\lambda.\Id$:
$$C^\lambda_{\cA\cB}=\lambda.\Id\circ C_{\cA\cB}\circ \lambda^{-1}.\Id.$$

\begin{remark}\label{r.homothetic}
The strong stable/unstable manifolds of $C^\lambda_{\cA\cB}$ are the homothetic images of those of $C_{\cA\cB}$: for all $i\in I, j\in J$, it holds:
\begin{align*}
W^{ss,i}(C^\lambda_{\cA\cB})&=\lambda.W^{ss,i}(C_{\cA\cB}),\\
W^{uu,i}(C^\lambda_{\cA\cB})&=\lambda.W^{uu,i}(C_{\cA\cB}).
\end{align*}
\end{remark}

We state without a proof the following easy result:

\begin{lemma}
If $C_{\cA\cB}$ is an $(I,J)$-connection from $\cA$ to $\cB$, then for any $\lambda>0$,  $C^\lambda_{\cA\cB}$ is also one. Moreover, for all $0<\lambda<\mu$, we have:
\begin{align*}
\size(C^\lambda_{\cA\cB})<\size(C^\mu_{\cA\cB}).
\end{align*}
\end{lemma}

\begin{lemma}\label{l.concat}
Let $\cA,\cB,\cC\in \mathfrak{A}_{I,J}$. Given $(I,J)$-connections $C_{\cA\cB}$ and $C_{\cB\cC}$ from $\cA$ to $\cB$ and from $\cB$ to $\cC$, respectively, 
for all $\epsilon>0$, there exists an $(I,J)$-connection $C_{\cA\cC}$ from $\cA$ to $\cC$ such that one has the following inequalities:
\begin{align}
\size(C_{\cA\cC})&\leq \max\bigl\{\size(C_{\cA\cB}),\size(C_{\cB\cC})+ \dist_{\mathfrak{A}}(\cA,\cB)+\epsilon\bigr\},\label{e.zvvve}
\end{align}
\end{lemma}

The idea of the proof is very natural: just paste a conjugate $C^\lambda_{\cB\cC}$ of the connection from $\cB$ to $\cC$ into the region where $C_{\cA\cB}=\cB$. The cyclic diffeomorphism thus obtained is however not necessarily a connection from $\cA$ to $\cC$. One needs $\lambda>0$ to be small enough for the stable and unstable manifolds of the new cyclic diffeomorphism to behave as expected. Some technical work is needed here, as the proof of \cref{c.fz} shows.  

\begin{proof}
Let $U$ be a bounded neighborhood of $0_\cE$ such that 
\begin{align}
C_{\cA\cB}^{\pm 1}=\cB^{\pm 1}\mbox{ on $U$.}\label{e.rqg}
\end{align}
Define the map $C_{\cA\cC,\lambda}\colon \cE\to \cE$ by 
\begin{align}
C_{\cA\cC,\lambda}&=C_{\cB\cC}^{\lambda} \mbox{ on $U$}\label{e.zfiofzp}\\
&=C_{\cA\cB} \mbox{ outside $U$.}\label{e.zfiofzp2}
\end{align}

\begin{claim}\label{c.fz}
Let $i\in I$. For small $\lambda>0$, the map $C_{\cA\cC,\lambda}$ is an $(\{i\},\emptyset)$-connection from $\cA$ to $\cC$.
\end{claim}

\begin{proof}
Let $V$ be the maximal bounded neighborhood $V$ of $0_\cE$ on which $C_{\cB\cC}\neq\cB$. Then $C^\lambda_{\cB\cC}=\cB^\lambda=\cB$ outside $\lambda V=\lambda.\Id(V)$. As a consequence, for small $\lambda>0$ (for $\lambda V\subset U$), the map  $C_{\cA\cC,\lambda}$ is a cyclic diffeomorphism on $\cE$.
\bigskip

We are left to show that for small $\lambda>0$, the $i$-strong stable manifolds of $C_{\cA\cC,\lambda}$ and $\cA$ coincide outside a bounded neighborhood of $0_\cE$.
Let $\cD$ be a bounded fundamental domain of $W^{ss,i}(\cB)$ (we ask that its closure $\overline{\cD}$ be a compact set of $W^{ss,i}(C_{\cA\cB})\setminus \{0_\cE\}$) such that 
\begin{align}
\cB^n(\cD)\subset U \mbox{   for all $n\geq 0$}.\label{e.fpojfzef} 
\end{align}
Let us prove that, for small $\lambda>0$, we have:
\begin{itemize}
\item[$(i)$] $\cD$ is a fundamental domain of $W^{ss,i}(\cC^\lambda_{\cB\cC})$,\label{i.1}
\item[$(ii)$] $(\cC^\lambda_{\cB\cC})^{-n}=\cB^{-n}$ by restriction to $\cD$, for all $n\geq 0$.\label{i.2}
\item[$(iii)$] $(\cC_{\cB\cC}^\lambda)^n(\cD)\subset U$, for all $n\geq 0$.\label{i.3}
\item[$(iv)$] $\cD$ is a fundamental domain of $W^{ss,i}(C_{\cA\cC,\lambda})$,\label{i.4}
\end{itemize}
\begin{itemize}
\item[Proof of $(i)$:]
It comes from \cref{r.homothetic}, from $\cD$ being a fundamental domain of $W^{ss,i}(\cB)$ and from  $C_{\cB\cC}$ and $W^{ss,i}(C_{\cB\cC})$ coinciding respectively with $\cB$ and $W^{ss,i}(\cB)$ outside a bounded neighborhood of $0_\cE$. 

\item[Proof of $(ii)$:]
The union of negative $\cB$-iterates of the stable fundamental domain $\cD$ is bounded away from $0_\cE$, hence for small $\lambda>0$, $(\cC^\lambda_{\cB\cC})^{-1}=\cB^{-1}$ on that union, and $(ii)$ holds.

\item[Proof of $(iii)$:]
By $(i)$, the set
$$\Si=\bigcup_{n\geq 0}{(C^\lambda_{\cB\cC})}^n(\cD)$$
is bounded. Hence, for small $0<\nu<1$,
\begin{align}
\nu.\Si&=\bigcup_{n\geq 0}{(C^{\lambda.\nu}_{\cB\cC})}^n(\nu.\cD)\nonumber\\
&\subset U.\label{e.iunezd2}
\end{align}
Conjugating the equality in $(ii)$ by $\nu.\Id$, we get
\begin{align}
(C^{\lambda.\nu}_{\cB\cC})^{-n}&=\cB^{-n}\mbox{ by restriction to $\nu.\cD$, for all $n\geq 0$.}\label{e.qege}
\end{align}
For all $x\in\cD$, there is a (unique) positive $\cB$-iterate of $x$ in $\nu.\cD$. By \cref{e.qege}, the  positive $(C^{\lambda.\nu}_{\cB\cC})$-iterates of $x$ coincide with the $\cB$-iterates of $x$ until that iterate in $\nu.\cD$. \cref{e.fpojfzef} gives that those iterates are in $U$. By \cref{e.iunezd2}, the next iterates (after reaching $\nu.\cD$) also lie in $U$. Therefore, $(iii)$ holds, for small $\lambda:=\lambda.\nu$, hence for small $\lambda$.

\item[Proof of $(iv)$:]
It is a straightforward consequence of \cref{e.zfiofzp} and Items $(i)$ and $(iii)$.
\end{itemize}

\medskip

By \cref{r.proper immersion}, one finds a bounded neighborhood $\hat{U}\subset U$ of $0_\cE$ such that
\begin{align}
C_{\cA\cB}^{-n}(\cD)\cap \hat{U}=\emptyset, \mbox{ for all $n\geq 0$.}\label{e.qerge}
\end{align}

For small $\lambda>0$, it holds
\begin{itemize}
\item[$(v)$] $C_{\cB\cC}^\lambda=\cB$ outside $\hat{U}$, 
\item[$(vi)$] $C_{\cA\cC,\lambda}=\cC_{\cA\cB}$ outside $\hat{U}$,
\item[$(vii)$] $C_{\cA\cC,\lambda}^{-n}(\cD)=C_{\cA\cB}^{-n}(\cD)$, for all $n\geq 0$.
\end{itemize}

Item $(v)$ is trivial. We deduce $(vi)$ from $(v)$ and \cref{e.rqg,e.zfiofzp,e.zfiofzp2}. Last, with \cref{e.qerge} we get $(vii)$.
\medskip

Items $(iv)$ and $(vii)$ give that, for small $\lambda>0$, the $i$-strong stable manifold of $C_{\cA\cC,\lambda}$ coincides with that of $C_{\cA\cB}$, hence with that of $\cA$, outside a bounded neighborhood of $0_\cE$. This ends the proof of the claim.
\end{proof}

The symmetrical claim, under time inversion also holds:

\begin{claim}
Let $j\in J$. For small $\lambda>0$, the map $C_{\cA\cC,\lambda}$ is an $(\emptyset,\{j\})$-connection from $\cA$ to $\cC$.
\end{claim}

As a consequence of both claims, for small $\lambda>0$, the map $C_{\cA\cC,\lambda}$ is an $(I,J)$-connection from $\cA$ to $\cC$.
\bigskip

We are left to compute the size of the connection $C_{\cA\cC,\lambda}$.
We recall $V\subset \cE$ is the bounded set on which $C_{\cB\cC}\neq\cB$. Notice that $C_{\cA\cC,\lambda}=C_{\cA\cB}$ outside $\lambda V$.
Let $v\in T\cE$ be a unit vector above $M\setminus \lambda V$. 
Then 
\begin{align*}d_{\|.\|_{\!\mbox{\scriptsize Eucl.}}}(DC_{\cA\cC,\lambda}(v),D\cA(v))&=d_{\|.\|_{\!\mbox{\scriptsize Eucl.}}}(DC_{\cA\cB}(v),D\cA(v))\\
&\leq \size(C_{\cA\cB}).
\end{align*}
Let $v\in T\cE$ be a unit vector above $\lambda V$. Then
\begin{align*}
d_{\|.\|_{\!\mbox{\scriptsize Eucl.}}}(DC_{\cA\cC,\lambda}(v),D\cA(v))&=d_{\|.\|_{\!\mbox{\scriptsize Eucl.}}}(DC^\lambda_{\cB\cC}(v),D\cA(v))\\
&\leq d_{\|.\|_{\!\mbox{\scriptsize Eucl.}}}(DC^\lambda_{\cB\cC}(v),D\cB(v))+d_{\|.\|_{\!\mbox{\scriptsize Eucl.}}}(D\cB(v),D\cA(v))\\
&\leq \size(C^\lambda_{\cB\cC})+d_{\|.\|_{\!\mbox{\scriptsize Eucl.}}}(D\cB(v),D\cA(v))\\
&\leq \size(C_{\cB\cC})+\dist_{\mathfrak{A}}(\cA,\cB)+d_\lambda,
\end{align*} 
where $d_\lambda$ is a quantity that only depends on the size of the bounded set $\lambda V$ and that tends to zero when $\lambda$ goes to zero. 

One may do the same study, looking at the preimages of the unit vectors $w\in T\cE$. We finally get that for small $\lambda>0$, \cref{e.zvvve} holds. This ends the proof of \cref{l.concat}.
\end{proof}

\begin{corollarysection}\label{c.concat?nation}
Let $\cA_1,...,\cA_\ell$ be a sequence of linear cycles in $\mathfrak{A}_{I,J}$ and, for all $1\leq n<\ell$, let $C_{\cA_n\cA_{n+1}}$ be an $(I,J)$-connection 
  from $\cA_n$ to $\cA_{n+1}$. 

Then, for all $\epsilon>0$, there exists an $(I,J)$-connection $C_{\cA_1\cA_{\ell}}$ from $\cA_1$ to $\cA_\ell$  such that 
\begin{align*}
\size(C_{\cA_1\cA_{\ell}})\leq \max_{1\leq n<\ell}\left\{\size(C_{\cA_n\cA_{n+1}})+ \dist_{\mathfrak{A}}(\cA_1,\cA_n)+\epsilon\right\}.
\end{align*}
\end{corollarysection}

\begin{proof}
The proof is a straightforward induction on $\ell$ applying \cref{l.concat} at each step. The connection $C_{\cA_1\cA_{\ell}}$ is depicted in \cref{p.concatenation}.
\end{proof}

\begin{figure}[hbt] 
\ifx\JPicScale\undefined\def\JPicScale{1}\fi
\psset{linewidth=0.011,dotsep=1,hatchwidth=0.3,hatchsep=1.5,shadowsize=0,dimen=middle}
\psset{dotsize=0.1 2.5,dotscale=1 1,fillcolor=black}
\psset{arrowsize=0.17 2,arrowlength=1,arrowinset=0.25,tbarsize=0.7 5,bracketlength=0.15,rbracketlength=0.15}
\psset{xunit=.425pt,yunit=.425pt,runit=.425pt}
\begin{pspicture}(200,550)(500,950)

{\pscustom[linewidth=1,linecolor=black,fillstyle=solid,fillcolor=lightgray]
{\newpath
\moveto(150,840.00000262)
\curveto(129.69783,806.84984262)(86.4566,778.71118262)(90,740.00000262)
\curveto(96.888622,664.74272262)(166.05239,585.58401262)(240,570.00000262)
\curveto(286.70029,560.15820262)(326.98434,609.32560262)(370,630.00000262)
\curveto(417.02542,652.60158262)(490,640.00000262)(510,700.00000262)
\curveto(525.20234,745.60701262)(430,751.92598262)(430,800.00000262)
\curveto(430,900.00000262)(374.51472,888.22857262)(330,910.00000262)
\curveto(290.95801,929.09480262)(238.79368,939.59436262)(200,920.00000262)
\curveto(171.9307,905.82243262)(166.42358,866.81706262)(150,840.00000262)
\closepath}}

{\pscustom[linewidth=1,linecolor=black,fillstyle=solid,fillcolor=white]
{\newpath
\moveto(430,700.00000262)
\curveto(413.41409,654.87773262)(355.33272,636.00175262)(310,620.00000262)
\curveto(275.28158,607.74493262)(227.44009,603.13997262)(200,610.00000262)
\curveto(149.74877,622.56280262)(111.44596,734.29730262)(150,760.00000262)
\curveto(180,780.00000262)(158.77659,824.50231262)(170,850.00000262)
\curveto(181.22341,875.49768262)(204.35539,903.98500262)(270,900.00000262)
\curveto(335.64461,896.01499262)(368.35148,874.63764262)(390,840.00000262)
\curveto(419.55466,792.71254262)(446.74463,745.55406262)(430,700.00000262)
\closepath}}

{\pscustom[linewidth=1,linecolor=black,fillstyle=solid,fillcolor=lightgray]
{\newpath
\moveto(220,850.00000262)
\curveto(167.57211,801.65443262)(185.76194,604.60955262)(230,640.00000262)
\curveto(280,680.00000262)(360.62308,641.24615262)(390,700.00000262)
\curveto(410,740.00000262)(370.60413,829.92333262)(340,850.00000262)
\curveto(309.39587,870.07666262)(246.51475,874.45016262)(220,850.00000262)
\closepath}}

{\pscustom[linewidth=1,linecolor=black,fillstyle=solid,fillcolor=white]
{\newpath
\moveto(240,830.00000262)
\curveto(261.82562,834.12821262)(310,840.00000262)(340,810.00000262)
\curveto(370,780.00000262)(349.8124,724.71859262)(340,710.00000262)
\curveto(321.04605,681.56908262)(206.07112,677.39343262)(206.07112,715.18800262)
\curveto(206.07112,734.15235262)(209.11379,739.11379262)(220,750.00000262)
\curveto(230.88621,760.88620262)(217.00658,769.60720262)(214.15234,783.87837262)
\curveto(211.2981,798.14954262)(218.17438,825.87179262)(240,830.00000262)
\closepath}}

{\pscustom[linewidth=1,linecolor=black,fillstyle=solid,fillcolor=white]
{\newpath
\moveto(270,770.00000262)
\curveto(193.77922,748.02821262)(309.10668,660.28003262)(309.10668,765.69563262)
\curveto(309.10668,778.19052262)(285.19928,775.74531262)(270,770.00000262)
\closepath}}

\psline{-*}(270,743)(270,743)

\rput(130,860){$\cA_1$}
\rput(190,845){$\cA_2$}
\rput(236,810){$\cA_3$}
\rput(255,791){$\ddots$}
\rput(292,763){\small $\cA_\ell$}
\rput(280,730){\SMALL $0_\cE$}
\rput(240,675){$C_{\cA_2\cA_3}$}
\rput(255,590){$C_{\cA_1\cA_2}$}

\end{pspicture}
\caption{A connection from a linear cycle $\cA_1$ to another $\cA_\ell$ obtained from a sequence of connections $(C_{\cA_n\cA_{n+1}})_{1\leq n<\ell}$.}
\label{p.concatenation}
\end{figure}

For all $\cA,\cB \in  \mathfrak{A}_{I,J}$, let $d_{I,J}(\cA\to\cB)$ be the infimum of the sizes of the $(I,J)$-connections from $\cA$ to $\cB$, and $+\infty$ if there is no such connection. Let $d_{I,J}(\cA,\cB)=\max\bigl\{d_{I,J}(\cA\to\cB),d_{I,J}(\cB\to\cA)\bigr\}$. This defines a distance on $\mathfrak{A}_{I,J}$: the triangle inequality comes from \cref{l.concat} and the fact that 
$$\dist_{\mathfrak{A}}(\cA,\cB)\leq \size(C_{\cA\cB}),$$
moreover  $d_{I,J}(\cA,\cA)=0$ since  $\cA$ is a trivial connection from $\cA$ to $\cA$.

\begin{proposition}\label{p.equivdistance}
The topologies induced  on $ \mathfrak{A}_{I,J}$ by the distances $d_{I,J}$ and $\dist_\mathfrak{A}$ coincide. 
\end{proposition}

\begin{proof}
The fact that $d_{I,J}\geq \dist_\mathfrak{A}$ is clear. We are left to show that, if a sequence $\cA_k\in  \mathfrak{A}_{I,J}$ converges to $\cA\in  \mathfrak{A}_{I,J}$ for the distance $\dist_\mathfrak{A}$, then it also does for $d_{I,J}$, that is, $d_{I,J}(\cA_k\to \cA)$ and $d_{I,J}(\cA\to \cA_k)$ both tend to $0$.

Fix two sequences $\cA_k,\cB_k\in  \mathfrak{A}_{I,J}$ converging to $\cA\in  \mathfrak{A}_{I,J}$, it is enough to show that the sequence $d_{I,J}(\cA_k\to \cB_k)$ converges to zero. Notice that the corresponding cyclic diffeomorphisms $\cA_k$ and $\cB_k$ converge to $\cA$ for the weak Whitney $C^1$-topology. 

Let $W^+(\cA)$ and $W^-(\cA)$ be local stable and unstable manifolds for $\cA$. For all $k\in \NN$, let $W^+(\cA_k)$ and $W^-(\cA_k)$ be local (strong)\footnote{The stable dimensions of $\cA_k$ and $\cA$ may differ for all $k$, however we know that for large $k$, the stable dimension of $\cA$ is a strong stable dimension of $\cA_k$.} stable and unstable manifolds for $\cA_k$ so that the sequences $W^+(\cA_k)$ and $W^-(\cA_k)$ converge $C^1$-uniformly to $W^{+}(\cA)$ and $W^{-}(\cA)$, respectively. 

\medskip

The rest of the proof is natural: applying \cref{p.pertpropsimple} to the sequences of diffeomorphisms $\cA_k$ and $\cB_k$ with the sequences  $W^+(\cA_k)$ and $W^-(\cA_k)$ and with a neighborhood $U$ of the orbit $0_\cE$ small enough, we obtain a sequence of diffeomorphisms $h_k=C_{\cA_k\cB_k}$ such that for large $k$, $C_{\cA_k\cB_k}$ is an $(I,J)$-connection from $\cA_k$ to $\cB_k$ whose size tends to $0$ as $k\to \infty$.

\medskip
We go into the details. Take a neighborhood $U$ of $0_\cE$ such that there is
\begin{itemize}
\item[(i)] a fundamental domain $\cD^+$ of $W^+(\cA)$ whose closure lies in the interior of $W^+(\cA)$ and such that the closed set $\overline{\cup_{n\in \NN}\cA^{-n}(\cD^+)}$ does not intersect the closure of $U$,
\item[(ii)] a fundamental domain $\cD^-$ of $W^-(\cA)$ whose closure lies in the interior of $W^-(\cA)$ and such that the closed set $\overline{\cup_{n\in \NN}\cA^{n}(\cD^-)}$ does not intersect the closure of $U$.  
\end{itemize}
\cref{p.pertpropsimple} gives 
\begin{itemize}
\item[1.] a sequence of diffeomorphisms $C_{\cA_k\cB_k}\in \Diff^1(M)$ converging to $\cA$ and coinciding with $\cA$ throughout $O_\cE$,
\item[2.] two sequences of local strong stable and unstable manifolds $W^+(C_{\cA_k\cB_k})$ and $W^-(C_{\cA_k\cB_k})$ for $C_{\cA_k\cB_k}$,
\end{itemize}
 such that for large $k\in \NN$ we have:
\begin{itemize}
\item[3.] $C_{\cA_k\cB_k}^{\pm 1}=\cB_k^{\pm 1}$ on some neighborhood $V$ of $O_\cE$,
\item[4.] $C_{\cA_k\cB_k}^{\pm 1}=\cA_k^{\pm 1}$ outside $U$,
\item[5.] For any integer $i\in I$, we have $$\left[W^+(C_{\cA_k\cB_k})\cap W^{ss,i}(C_{\cA_k\cB_k})\right]\setminus U=\left[W^+(\cA_k)\cap W^{ss,i}(\cA_k)\right]\setminus U,$$
and likewise, replacing $I$ by $J$ and stable manifolds by unstable ones. 
\end{itemize}

With items 1 and 2, items $(i)$ and $(ii)$ imply
that for large $k\in\NN$, there is
\begin{itemize}
\item[(iii)] a fundamental domain $\cD_k^+$ in $W^+(C_{\cA_k\cB_k})$ such that the closed set $\overline{\cup_{n\in \NN}C_{\cA_k\cB_k}^{-n}(\cD^+_k)}$ does not intersect the closure of $U$,
\item[(iv)] a fundamental domain $\cD^-_k$ in  $W^-(C_{\cA_k\cB_k})$ such that the closed set $\overline{\cup_{n\in \NN}C_{\cA_k\cB_k}^{n}(\cD^-_k)}$ does not intersect the closure of $U$.  
\end{itemize}
With items 4 and 5, this implies that, for large $k\in\NN$ and all $i\in I$, we have
\begin{align*}
W^{ss,i}(\cA_k)\setminus U&=W^{ss,i}(C_{\cA_k\cB_k})\setminus U,\\
W^{uu,j}(\cA_k)\setminus U&=W^{uu,j}(C_{\cA_k\cB_k})\setminus U.
\end{align*}
Hence, for large $k\in \NN$, $C_{\cA_k\cB_k}$ is an $(I,J)$-connection from $\cA_k$ to $\cB_k$. By $C^1$-convergence of $C_{\cA_k\cB_k}$ and $\cA_k$ to $\cA$, and the fact that $C_{\cA_k\cB_k}=\cA_k$ outside $U$, the size of the connections $C_{\cA_k\cB_k}$ tends to $0$. In particular $d_{I,J}(\cA_k\to \cB_k)$ converges to zero. 
\end{proof}

\subsection{Proof of \cref{t.mainsimplestatement}}
The proof of \cref{t.mainsimplestatement}  summarizes as follows: as a consequence of \cref{c.concat?nation,p.equivdistance} we find a connection $C_{\cA_0\cA_1}$ of size arbitrarily close to the radius of the path $\cA_t$. We linearize the diffeomorphism $f$ locally around the orbit of $P$ thanks to \cref{c.linearisationC1}; then into that linear domain, we glue a conjugate by a homothety $C^\lambda_{\cA_0\cA_1}$ of the $(I,J)$-connection $C_{\cA_0\cA_1}$. For small $\lambda>0$, the map $g_\lambda$ thus obtained will satisfy the required properties. 

\bigskip

We recall without a proof the following folklore:

\begin{lemma}\label{l.folkloremetric}
Let $M$ be a manifold and $K\subset M$ a compact subset.
Let $\|.\|_1$ and $\|.\|_2$ be two Riemannian metrics on $M$ such that they coincide on $T_KM$. 
For any $\epsilon>0$ and $C>1$, there exists a neighborhood $U$ of $K$ such that:
 
 if two diffeomorphisms $g,h$ of $M$ leave $K$ invariant,  coincide outside $U$ and are both bounded by $C$ for $\|.\|_1$, then 
\begin{align*}
\left|\dist_{\|.\|_1}(g,h)-\dist_{\|.\|_2}(g,h)\right|<\epsilon.
\end{align*}
where $\dist_{\|.\|_*}(g,h)$ is the $C^1$-distance between $g$ and $h$.
\end{lemma}

Let $P$ be a periodic point for a diffeomorphism $f$ of a Riemannian manifold $(M,\|.\|)$. We put a linear structure on  a neighborhood of the orbit $\Orb_P$ of $P$ by identifying it to a neighborhood of $0_\cE=\ZZ/p\ZZ\times \{0\}$ in $\cE$ through a diffeomorphism $\phi$ such that $\phi(f^n(P))=(n,0)$, for each integer $n\in \ZZ/p\ZZ$. Choose $\phi$ so that the pull-back $\phi^*\|.\|_{\mbox{Eucl.}}$ of the canonical Euclidean metric $\|.\|_{\mbox{Eucl.}}$ coincices with the metric $\|.\|$ by restriction to $T_{\Orb_P}M$. We extend $\phi^*\|.\|_{\mbox{Eucl.}}$ to some Riemannian metric $\|.\|_1$ on $M$.

The set $\mathfrak{A}$ of linear cycles on $\cE$ identifies through $\phi$ to the set of $p$-uples of linear isomorphisms
$$(A_1,...,A_p)\in \prod_{1\leq n \leq p}\cL\Bigl(T_{f^{n-1}(P)}M,T_{f^n(P)}M\Bigr),$$
where $\cL(E,F)$ is the set of linear isomorphisms from the vector space $E$ to the vector space $F$.
We are now ready for the proof of the main theorem.

\begin{proof}[Proof of \cref{t.mainsimplestatement}] We decompose our proof into three steps.

\bigskip

\noindent {\em Step 1: existence of a connection of the wanted size.}
By the identification above, the radius of the path $\cA_t$ in the hypotheses of  \cref{t.mainsimplestatement} is equal to
\begin{align*}
R=\max_{0\leq t\leq 1}\dist_{\mathfrak{A}}(\cA_0,\cA_t).
\end{align*} 
Let $I$ (resp. $J$) be the set of integers $i>0$ such that, for all $t\in[0,1]$, the linear endomorphism $B_t=A_{p,t}\circ ... \circ A_{1,t}$ admits an $i$-strong stable (resp. unstable) direction.

Let $\epsilon>0$ and $\delta =R+\epsilon$. By \cref{p.equivdistance}, there is a sequence $$t_1=0<t_2<... <t_{\ell-1}<t_\ell=1$$ such that $d_{I,J}(\cA_{t_n},\cA_{t_{n+1}})<\epsilon/4$, for all $1\leq n<\ell$. Thus one finds for each such $n$ an $(I,J)$-connection $C_{\cA_{t_n}\cA_{t_{n+1}}}$ from $\cA_{t_n}$ to $\cA_{t_{n+1}}$ whose size is less than $\epsilon/6$. By \cref{c.concat?nation}, there is an $(I,J)$-connection $C_{\cA_0\cA_1}$ from $\cA_0$ to $\cA_1$ such that 
\begin{align*}
\size(C_{\cA_0\cA_1})&<\max\bigl\{\size(C_{\cA_{t_n},\cA_{t_{n+1}}})+\dist_{\mathfrak{A}}(\cA_{0},\cA_{t_{n}})+\epsilon/6\bigr\}\\
&< R+\epsilon/3.
\end{align*}

\bigskip

\noindent {\em Step 2: local linearization.}
Let $W^\pm(f)$ be local stable and unstable manifolds of $\Orb_P$ for $f$.
By \cref{c.linearisationC1}, there is $\tilde{f}\in\Diff^1(M)$ such that $\dist_{\|.\|}(f,\tilde{f})<\epsilon/3$ and local stable and unstable manifolds $W^\pm(\tilde{f})$ for $\tilde{f}$ such that:
\begin{itemize}
\item $f^{\pm 1}=\tilde{f}^{\pm 1}$ throughout $\Orb_P$ and outside $U$, 
\item $\tilde{f}$ coincides on a neighborhood of $\Orb_P$ with the linear part $\cA_0$ of $f$ along $\Orb_P$,
\item For any $i\in I$, $W^{ss,i}(\tilde{f})$ exists and
$$\left[W^+(f)\cap W^{ss,i}(f)\right]\setminus U=\left[W^+(\tilde{f})\cap W^{ss,i}(\tilde{f})\right]\setminus U,$$
and likewise, replacing stable manifolds by unstable ones. \end{itemize}

\bigskip

\noindent {\em Step 3: plugging the connection into $\tilde{f}$.}
We now glue the $(I,J)$-connection $C^\lambda_{\cA_0\cA_1}$, for some small $\lambda>0$, into the linear region of $\tilde{f}$. Let $V$ be a neighborhood of $\Orb_P$ on which $\tilde{f}$ coincides with the linear diffeomorphism $\cA_0$.\footnote{Recall that $V$ is identified to a neighborhood of $0_\cE$ in $\cE$, through $\phi$. }
Define a map $g_\lambda$ of $M$ by
\begin{align*}
g_\lambda&=C^\lambda_{\cA_0\cA_1} \mbox{ by restriction to $V$}\\
&=\tilde{f}\quad \mbox{ outside $V$.}
\end{align*}

\begin{claim}\label{c.paavap}
For small $\lambda>0$, the strong stable manifolds of $g_\lambda$ and $\tilde{f}$ coincide semilocally outside $U$: for small $\lambda>0$, there exist a a local strong stable manifold $W^{+}(g_\lambda)$ such that, for all $i\in I$, we have
\begin{align*}
\left[W^+(g_\lambda)\cap W^{ss,i}(g_\lambda)\right]\setminus U&=\left[W^+(\tilde{f})\cap W^{ss,i}(\tilde{f})\right]\setminus U.\\
&=\left[W^+(f)\cap W^{ss,i}(f)\right]\setminus U.
\end{align*}
\end{claim}

We leave the proof of \cref{c.paavap} to the reader as it follows exactly the proof of \cref{c.fz}, in \cref{l.concat}. The symmetrical claim obviously holds for strong unstable manifolds.

We are left to estimate the size of the perturbation $g_\lambda$ of $f$. Notice that, since $g_\lambda^{\pm 1}=\tilde{f}^{\pm 1}$ outside the region of $M$ on which $\|.\|_1$ differs from the pull-back of $\|.\|_{\mbox{Eucl.}}$,  we have 
\begin{align*}
\dist_{\|.\|_1}(g_\lambda,\tilde{f})&=\dist_{\|.\|_{\mbox{Eucl.}}}(C^\lambda_{\cA_0\cA_1},\cA_0)\\
&=\size(C^\lambda_{\cA_0\cA_1})\\
&< R+\epsilon/3.
\end{align*}
By \cref{l.folkloremetric}, we deduce that for small $\lambda>0$, $\dist_{\|.\|}(g_\lambda,\tilde{f})<R+2\epsilon/3$. Therefore $\dist_{\|.\|}(g_\lambda,f)<\delta$.

Hence the diffeomorphism $g=g_\lambda$ satisfies all the required properties, for small $\lambda>0$. This ends the proof of \cref{t.mainsimplestatement}.
\end{proof}

\begin{proof}[Sketch of a proof of \cref{t.mainsimplestatementmoregeneral}]
A proof of \cref{t.mainsimplestatementmoregeneral} can easily be adapted from this proof of \cref{t.mainsimplestatement}. Put 
$$\Gamma=\bigl[W^+(g_\lambda)\cap W^{ss,i}(g_\lambda)\bigr]\setminus U=\bigl[W^+(f)\cap W^{ss,i}(f)\bigr]\setminus U.$$
Then the map 
\begin{align*}
\begin{cases}
\Gamma& \to ]0,+\infty[\\
x&\mapsto \dist_{W^{ss,i}(g_\lambda)}(x,\Orb_P)
\end{cases}
\end{align*}
converges uniformly, as $\lambda\to 0$, to the map 
 \begin{align*}
\begin{cases}
\Gamma&\to ]0,+\infty[\\
x&\mapsto \dist_{W^{ss,i}(f_k)}(x,\Orb_P)
\end{cases}.
\end{align*}
Then just take a local stable manifold $W^+(f)$ so that $W^+(f)\setminus U$ contains the sets $K_i$, for all $i\in I$, and we are done. The same obviously holds on the unstable manifolds.
\end{proof}

\section{Proof of \cref{p.pertpropsimple}}\label{s.prop4} In the following, $P$ is a $p$-periodic point for a diffeomorphism $f\in \Diff^r(M)$.

\subsection{Notations and preliminaries}\label{s.eapro}
While the diffeomorphisms we will consider in the following may vary, they will all coincide with $f$ throughout the orbit $\Orb_P$ of the periodic point $P$, and all stable or unstable manifolds of this paper will be those of that orbit. Thus, whenever it exists, we can unambiguously denote the $i$-dimensional strong stable (resp. unstable) manifold of the orbit $\Orb_P$ of $P$ for a diffeomorphism $g$ simply by $W^{ss,i}(g)$ (resp. $W^{uu,i}(g)$). 
 
Let $W^+(g)$ be a local strong stable manifold of $\Orb_P$ for $g$. If $W^{ss,i}(g)$ exists, then the local $i$-strong stable manifold in $W^+(g)$ is defined by
\begin{align*}
W^{+,i}(g)=W^+(g)\cap W^{ss,i}(g)
\end{align*}

Let $I$ be the set of indices $i$ such that $W^{ss,i}(f)$ exists. The diffeomorphisms $g$ we will consider will moreover be $C^r$-close to $f$, in particular $W^{ss,i}(g)$ will exist for all $i\in I$. For simplicity we write 
 $$W^{+,I}(g)=\{W^{+,i}(g)\}_{i\in I}.$$ Furthermore, given a map $\Phi \colon M \to M$, we denote by  $\Phi W^{+,i}(g)$ the image by $\Phi$ of $W^{+,i}(g)$ and denote by 
$$\Phi W^{+,I}(g)=\{\Phi W^{+,i}(g)\}_{i\in I}$$
the incomplete flag of $I$-strong stable manifolds in $W^+(g)$.
We write that {\em $W^{+,I}(g)=W^{+,I}(h)$ outside $U$} (resp. {\em by restriction to $K$}) if fo all $i\in I$, 
\begin{align*}
&W^{+,i}(g)\setminus U=W^{+,i}(h)\setminus U\\
\mbox{(resp. }\;\;\;& W^{+,i}(g)\cap K=W^{+,i}(h) \cap K \;\;). 
\end{align*}
The following lemma could be written in a simpler way, but we chose to state it so that it can  be  used for the proof \cref{l.pqojef} without changes of indices.

\begin{lemma}\label{l.A}
Let $g\in \Diff^r(M)$ be a diffeomorphism that coincides with $f$ throughout $\Orb_P$, and let $W^+(g)$ be a local strong stable manifold for $g$. Let $B\subset M$ be a compact set such that 
\begin{align}
& g^{-2}B, g^{-1}B, B, g(B),..., g^N(B),g^{N+3}(B)  \mbox{ are pairwise disjoint,}\label{s.pwdisjointA}\\
& B \mbox{ is disjoint from $gW^+(g)$.}\label{s.disjointBA}
\end{align}  
Let $\Phi\in \Diff^r(M)$ be a diffeomorphism such that $\Phi=\Id$ outside $B$.  
Define $\Psi,h\in\Diff^r(M)$ by 
\begin{itemize}
\item $\Psi=g^n\circ \Phi \circ g^{-n}$ on $g^nB$, for all $-2\leq n\leq N+2$, and $\Psi=\Id$ outside.
\item $h=g\circ \Psi_k^{-1}$ on $g^{N+2}B$ and $h=g$ outside.
\end{itemize}
Then, $W^+(h)=\Psi W^+(g)$ is a local strong stable manifold for $h$, and 
\begin{align*}
h^{-2}W^{+,I}(h)=\Psi [g^{-2}W^{+,I}(g)].
\end{align*}
\end{lemma}

\begin{proof}
By (\ref{s.disjointBA}), we find a neighborhood $\cV$ of $g^{-2}W^+(g)$ such that 
\begin{align}
g(\cV)\cap g^{-2}(B)=\emptyset\label{e.rergqre}
\end{align} Let 
\begin{align*}
\Gamma&=\cV\cup \bigcup_{-2\leq n\leq N+2}g^{n}(B).
\end{align*}
Notice that $\Psi(\Gamma)=\Gamma$. Notice also that, as $\Psi$ restricts to a diffeomorphism of $g^{N+2}(B)$, $h=g\circ \Psi^{-1}$ on $g^{N+2}(B)$ implies that $h\circ \Psi=g$ on $g^{N+2}(B)$.
\begin{claim}
We have a commuting diagram 
$$\xymatrix{\Gamma \ar[rr]^g \ar[d]^\Psi && g(\Gamma) \ar[d]^\Psi \\
            \Gamma\ar[rr]^h && h(\Gamma)},$$
            that is, $h\circ \Psi (x)=\Psi\circ g(x)$ for all $x\in \Gamma$. 
\end{claim}

\begin{proof}
Let $x\in \Gamma$. There are three cases:
\begin{itemize}
\item If  $x\in g^{n}(B)$, for some $-2\leq n< N+2$, then we have
\begin{align*}
h\!\circ\! \Psi(x)&=g\!\circ\! \Psi(x), \mbox{ since $\Psi(x)\notin g^{N+2}(B)$,}\\
&=g^{(n+1)}\circ \Phi\circ g^{-(n+1)}(g(x))\\
&=\Psi\circ g(x).
\end{align*}
\item If $x\in g^{N+2}(B)$, then 
\begin{align*}
h\circ \Psi(x)=g(x)&\in g^{N+3}(B)\\
&\notin g^{n}(B),  \mbox{ for $-2\leq n\leq N+2$, by  (\ref{s.pwdisjointA})}.\\
\end{align*}
Hence, $\Psi\circ g(x)=g(x)=h\circ \Psi(x)$.
\item If $x\in \cV\setminus \bigcup_{-2\leq n\leq N+2}g^{n}(B)$, then we have
\begin{align*}
x&=\Psi(x),\\
g(x)&=h(x),\\
g(x)&\notin  \bigcup_{-1\leq n\leq N+3} g^{n} (B),\\
g(x)&\notin g^{-2}(B), \qquad\mbox{ by (\ref{e.rergqre})}.
\end{align*}
This implies that $h\circ \Psi(x)=h(x)=g(x)=\Psi\circ g(x)$.
\end{itemize}
We have the wanted equality in all cases: we proved the claim.
\end{proof}

By the claim, the dynamics of $g$ is conjugated to that of $h$, by restriction to the neighborhood $\Gamma$ of $\Orb_P$, through the diffeomorphism $\Psi$. Hence, $W^+(g)\subset \Gamma$ being a local strong stable manifold for $g$, its image $W^+(h)=\Psi W^+(g)$ by $\Psi$ is also one for $h$. 
Moreover, $g^{-2}W^+(g)\subset \Gamma$, easily implies that $h^{-2}W^+(h)=\Psi\bigl[g^{-2}W^+(g)\bigr]$ and $h^{-2}W^{+,i}(h)=\Psi\bigl[g^{-2}W^{+,i}(g)\bigr]$, for all $i\in I$. This ends the proof of the lemma.
\end{proof}

\bigskip

From now on, fix local strong stable and unstable manifolds $W^+(f)$ and $W^-(f)$ for $f$, and a neighborhood $U$ of $\Orb_P$.

In the hypotheses of \cref{p.pertpropsimple}, we may reduce the neighborhood $U$ of $\Orb_P$ and assume that it is open, without loss of generality. Take $N\geq 2$ such that 
$$f^{N}W^{+}(f), f^{-N}W^{-}(f) \subset U.$$
 Let $A^s$ be an annulus, compact thickening of the union of $N+1$ fundamental domains
 $$D^s=f^{-1}W^{+}(f)\setminus f^{N}W^+(f),$$
 that is, $A^s$ identifies through a diffeomorphism to $D^s\times[-1,1]$, with $D^s\simeq D^s\times \{0\}$.
Symmetrically, let $A^u$ be a compact thickening of the union of 
$$D^u=fW^{-}(f)\setminus f^{-N}W^-(f). $$

In particular $A^s\cup U$ and $A^u\cup U$  are neighborhoods of $W^+(f)$ and $W^-(f)$, respectively.
Choosing the thickenings $A^s,A^u$ thin enough, we have that
\begin{itemize}
\item[A.] the sets $A^s$,$A^u$,  $f^{N+2}(A^s)$, $f^{-N-2}(A^u)$ are pairwise disjoint.
\item[B.]  $f^{N+2}(A^s)$ is included in $U$ and disjoint from $W^-(f)$.
\item[C.] $f^{-N-2}(A^u)$ is included in $U$ and disjoint from  $W^+(f)$.
\end{itemize}

\begin{claim}\label{c.pqeriogg}
It is enough to prove \cref{p.pertpropsimple} when $h_k^{\pm 1}=f^{\pm 1}$ on $A^s, A^u$.
\end{claim}

We state without a proof the following folklore:

\begin{lemma}\label{l.qerg}
Given a compact set $K\subset W^s(f)\setminus \Orb_P$ there exists a compact set $L$ that contains $K$ in its interior such that
\begin{itemize}
\item for any sequence of diffeormophisms $h_k$ converging to $f$ in $\Diff^r(M)$, 
\item for any open set $\cO\supset L$, 
\end{itemize}
there exists a sequence $\Phi_k\in \Diff^r(M)$ converging to $\Id_M$  such that for large $k$, we have $\Phi_k=\Id$ outside $\cO$ and $\Phi_k \circ h_k\circ \Phi_k^{-1}=f$ by restriction to $L$.
\end{lemma}

\begin{proof}[Proof of \cref{c.pqeriogg}]
Taking the thickenings $A^u$ and $A^s$ of $D^u$ and $D^s$ thin enough and applying twice \cref{l.qerg} to $K=A^s$ and $K=A^u$, one finds a sequence $\Phi_k\in \Diff^r(M)$ converging to $\Id_M$ such that 
\begin{itemize}
\item $\Phi_k=\Id$ on a neighborhood of $\Orb_P$, 
\item if we put 
\begin{align*}
&\tilde{g}_k=\Phi_k \circ g_k\circ \Phi_k^{-1}\\
&\tilde{h}_k=\Phi_k \circ h_k\circ \Phi_k^{-1}\\
&W^{\pm}(\tilde{h}_k)= \Phi_k\bigl[W^{\pm}(h_k)\bigr],
\end{align*}
then, for large $k\in \NN$, we have $\tilde{h}_k^{\pm 1}=f^{\pm 1}$ by restriction to $A^s$ and $A^u$.
\end{itemize}
Applying \cref{p.pertpropsimple} to the pair of sequences $\tilde{g}_k,\tilde{h}_k$, and a neighborhood $\tilde{U}$ of $\Orb_P$ whose closure lies in the interior of $U$, we get sequences $\tilde{f}_k$ and $W^{\pm}(\tilde{f}_k)$. Then, for large $k\in \NN$, the sequences $f_k=\Phi_k^{-1} \circ \tilde{f}_k\circ \Phi_k$ and $W^{\pm}(f_k)= \Phi_k^{-1}\bigl[W^{\pm}(\tilde{f}_k)\bigr]$ will satisfy the conclusions of  \cref{p.pertpropsimple} with respect to $U$ and to the sequences $g_k,h_k,$ and $W^{\pm}(h_k)$. This ends the proof of \cref{c.pqeriogg}.\end{proof}

\subsection{Thick torus of orbits in $A^s$ and flag of invariant manifolds}

The {\em space of maximal segments of orbits in $A^s$} for $f$ is the quotient space $A^s\!/\!\!\sim$, where $x\sim y$ if and only if there is a segment of $f$-orbit included in $A^s$ that joins $x$ to $y$.\footnote{For $A^s$ thin enough, which we will assume, the equivalence classes are indeed segments of orbits.} Thanks to \cref{c.pqeriogg}, all the diffeomorphisms we will consider can be assumed to coincide with $f$ on $A^s$. In particular, the space of maximal segments orbits in $A^s$ will be common to all those dynamics. Let $\pi\colon A^s \to A^s\!/\!\!\sim$ be the canonical projection.

We choose $A^s$ thin enough so that $f^{-2}W^+(f)\cap A^s=D^s$. Then $f^{-2}W^+(f)\cap A^s$ is a union of $\sim$ classes, in particular it identifies to its image by $\pi$ in $A^s\!/\!\!\sim$\footnote{This is not the case of $W^+(f)\cap A^s$, which is made of non-maximal segments of $f$-orbits in $A^s$, hence it has not natural identification to a subset of $A^s\!/\!\!\sim$}.
 We denote that image by $W^+(f)/\!\!\sim$. 
 
 Notice that 
  $$W^+(f)/\!\!\sim\;\;=\;\pi[f^{-1}W^{+}(f)\setminus f^{N}W^+(f)]$$
is diffeomorphic to the "torus" $\SS^1\times \SS^m$, where $m+1$ is the dimension of $W^+(f)$.
One naturally endows a closed neighborhood $T$ of $W^+(f)/\!\!\sim$ in $A^s/\!\!\sim$ with a structure of $C^r$ manifold so that the restriction $\pi : \pi^{-1}(T)\to T$ is a $C^r$ local diffeomorphism. Redefine $A^s$ by $A^s=\pi^{-1}T$. The manifold $T$ is the {\em thick torus of orbits} in the annulus $A^s$.

For simplicity, in the following
\begin{itemize}
\item "$f_k\to f$" will stand for

"a sequence of diffeomorphisms $f_k\in \Diff^r(M)$ converging to $f$ for the $C^r$ topology,"
\item  "$W^+(f_k)\to W^+(f)$" will stand for

"a sequence  $W^+(f_k)$ of local strong stable manifolds for the diffeomorphisms $f_k$ converging to $W^+(f)$ for the $C^r$-topology".
\end{itemize}

\begin{lemma}\label{l.tori}
Let $f_k\to f$ and $W^+(f_k)\to W^+(f)$, with $f_k^{\pm 1}=f^{\pm 1}$ on $A^s$, in particular the segments of orbits in $A^s$ are the same for $f_k$ and $f$.

Then for large $k\in\NN$, the sets $f_k^{-2}W^+(f_k)\cap A^s$ and $f_k^{-2}W^{+,i}(f_k)\cap A^s$, for all $i\in I$, are made of maximal segments of $f=f_k$-orbits in $A^s$. Therefore, they identify to their respective images by $\pi$.
\end{lemma}

We denote
 $$W^+(f_k)/\!\!\sim\;\;=\;\pi\bigl[f_k^{-2}W^+(f_k)\cap A^s\bigr],$$ 
$$W^{+,i}(f_k)/\!\!\sim \;\; = \; \pi(f_k^{-2}W^{+,i}(f_k)\cap A^s).$$

\begin{remark}\label{r.tori}
For large $k\in\NN$, the sets $W^+(f_k)/\!\!\sim$ are tori in $T$, and they $C^r$-converge to the torus $W^+(f)/\!\!\sim$ as $k\to \infty$. Likewise, for large $k$, the sets $W^{+,I}(f_k)/\!\!\sim$ are $C^r$-submanifolds of $T$ diffeomorphic to $\SS^1\times \SS^{i-1}$, and they $C^r$-converge to the submanifold $W^{+,i}(f)/\!\!\sim$.
\end{remark}
%
We say that the sequence of (incomplete) flag of manifolds  
$$W^{+,I}(f_k)/\!\!\sim=\bigl\{W^{+,i}(f_k)/\!\!\sim\bigr\}_{i\in I}$$
$C^r$-converges to the flag $W^{+,I}(f)/\!\!\sim$ within $T$.

\begin{proof}[Proof of \cref{l.tori}]
Let $x_k\in f_k^{-2}W^+(f_k)\cap A^s$ be a sequence converging to some $x\in D_n= f^{-2}W^+(f)\cap A^s$. Let $(f^nx)_{p\leq n\leq q}$, where $p\leq 0\leq q$, be the maximal segment of $f$-orbit through $x$ in $A^s$. As $f^nx\in \Int A^s$ for $p<n<q$ and by compactness of $A^s$, for large $k$, the maximal segment of orbit through $x_k$ in $A^s$ is of the form  $(f^nx)_{p_k\leq n\leq q_k}$, where $p_k\in \{p,p+1\}$ and $q_k\in \{q-1,q\}$. As the points $(f^nx)_{p\leq n\leq q}$ lie in the interior of $f^{-2}W^+(f)$, the points $(f^nx)_{p_k\leq n\leq q_k}$ lie in $f_k^{-2}W^+(f_k)$, for large $k$. In particular, $f_k^{-2}W^+(f_k)\cap A^s$ contains the maximal segment of orbit  maximal segment of $f$-orbit through $x_k$ in $A^s$. By a compactness argument, one deduces that, for large $k$, the set $f_k^{-2}W^+(f_k)\cap A^s$ is a union of maximal segments of $f$-orbits in $A^s$.
\end{proof}

Let $x\in \Int\bigl[f^{-1}W^{+}(f)\bigr]\setminus fW^+(f)$. If a ball $B$ centered at $x$ is small  enough, then it clearly satisfies:
\begin{itemize}
\item[$(i)$] the restriction $\pi_{|B}$ of $\pi$ to $B$ is a diffeomorphism.
\item[$(ii)$] For all $0\leq n< N$, $f^nB\subset \Int A^s$. Indeed, $f^n(x)\in\Int D^s$.
\item[$(iii)$] $f^{-2}B\cap A^s=\emptyset$ and $f^{N+1}B\cap A^s=\emptyset$. Indeed $f^{-2}x,f^{N+1}x\notin D^s$.
\item[$(iv)$] From $(ii)$ and $N\geq 2$, we have $f^{N+2}B,f^{N+3}B\subset \Int( f^{N+2}A^s)$.
\item[$(v)$] From $(ii)$ and $(iii)$, we get the following: for any $z\in M$ and $y\in B$, we have $z\sim y$ if and only if $z\in A^s$ and $z=f^n(y)$, for some $-1\leq n\leq N$.  In other words,
\begin{align*}\pi^{-1}\circ \pi (B)&=\bigcup_{-1\leq n\leq N}A^s\cap f^n(B).\\
&=\bigcup_{-2\leq n\leq N+2}A^s\cap f^n(B),\mbox{ by $(iii)$, $(iv)$ and item A.}
\end{align*}
\item[$(vi)$] The sets $f^{-2}B,f^{-1}B,....,f^{N+3}B$ are pairwise disjoint.
\item[$(vii)$]  $B\cap fW^+(f)=\emptyset$.
\end{itemize}
By a compactness argument, one finds $\ell$ such balls $B_1,...,B_\ell$ such that the interiors $\Int B_i$ cover a fundamental domain of $W^+(f)$, hence the open sets $\pi(\Int B_i)$ cover the torus $W^+(f)/\!\!\sim$.

\begin{lemma}\label{l.pqojef}
Let $f_k\to f$ and $W^+(f_k)\to W^+(f)$ such that $f^{\pm 1}=f_k^{\pm 1}$ on $A^s$. This gives a sequence  $\cF_k=W^{+,I}(f_k)/\!\!\sim$ of flags of $T$, as defined in \cref{r.tori}. Let $B=B_i$ be one of the balls above, and let $\Phi_k$ be a sequence in $\Diff^r(M)$ converging to $\Id$, such that $\Phi_k=\Id$ outside $\pi(B)$. 

Then there exist sequences $g_k\to f$  and $W^+(g_k)\to W^+(f)$, such that 
\begin{itemize} 
\item $g_k=f_k$ outside the set $f_k^{N+2}B$, hence $g_k^{-1}=f_k^{-1}$ outside $f_k^{N+3}B$.
\item for large $k\in \NN$, we have $g_k^{\pm 1}=f_k^{\pm 1}=f^{\pm 1}$ on $A^s$, hence the flag $W^{+,I}(g_k)/\!\!\sim$ is well-defined, by  \cref{l.tori,r.tori},
\item moreover, $W^{+,I}(g_k)/\!\!\sim$ is the image $\Phi_k\cF_k$ by $\Phi_k$ of the flag $\cF_k$.
\end{itemize}
\end{lemma}

\begin{remark}\label{r.porefjap}
In particular, the conclusions imply that for large $k$, $g_k^{\pm 1}=f_k^{\pm 1}$ outside $f^{N+2}(A^s)$, since $(i)$ implies that $f_k^{N+2}B,f_k^{N+3}B\subset \Int f^{N+2}(A^s)$, for large $k\in \NN$.
\end{remark}

\begin{proof}[Proof of \cref{l.pqojef}]

Let $\tilde{\Phi}_k\in \Diff^r(M)\to \Id$ such that  $\tilde{\Phi}_k=\Id$ outside $B$ and such that the following diagram commutes:
$$\xymatrix{B \ar[rr]^{\tilde{\Phi}_k} \ar[d]^\pi && B \ar[d]^\pi \\
            \pi(B)\ar[rr]^{\Phi_k} && \pi(B)}$$
By $(vi)$ and $(vii)$, the compact sets $f_k^{-2}B,f_k^{-1}B,....,f_k^{N+3}B$ are pairwise disjoint and $B\cap f_kW^+(f_k)=\emptyset$, for large $k$. Define $\Psi_k,g_k\in\Diff^r(M)$ by 
\begin{itemize}
\item $\Psi_k=f_k^n\circ \tilde{\Phi}_k \circ f_k^{-n}$ on $f_k^nB$, for all $-2\leq n\leq N+2$, and $\Psi_k=\Id$ outside.
\item $g_k=f_k\circ \Psi_k^{-1}$ on $f_k^{N+2}B$ and $g_k=f_k$ outside.
\end{itemize}
By \cref{l.A}, $W^+(g_k)=\Psi_k W^+(f_k)$ is a local strong stable manifold for $g_k$,
and $g_k^{-2}W^{+,I}(g_k)=\Psi_k \bigl[f_k^{-2}W^{+,I}(f_k)\bigr]$. In particular, we have 
\begin{align}
g_k^{-2}W^{+,I} \cap B=\Psi_k \bigl[f_k^{-2}W^{+,I}(f_k)\bigr]\cap B .\label{e.pzogj}
\end{align}
By $(iv)$ and item A of \cref{s.eapro},  for large $k\in \NN$, $f_k^{N+2}B$ and $f_k^{N+3}B$ do not intersect $A^s$, thus we have $g_k^{\pm 1}=f_k^{\pm 1}=f^{\pm 1}$ on $A^s$. 

On the other hand, \cref{l.tori} implies that, for $\xi=f$ or $g$ and for large $k\in \NN$, we have
\begin{align}
\pi\bigl[\xi_k^{-2}W^{+,I}(\xi_k) \cap B\bigr]&=\pi\bigl[\xi_k^{-2}W^{+,I}(\xi_k)\bigr] \cap \pi(B)\nonumber\\
&=\left[W^{+,I}(\xi_k)/\!\!\sim\right] \cap \pi(B).\label{e.pzogj2}
\end{align}
With the commuting diagram
$$\xymatrix{B \ar[rr]^{\Psi_k} \ar[d]^\pi && B \ar[d]^\pi \\
            \pi(B)\ar[rr]^{\Phi_k} && \pi(B)}$$
we deduce from \cref{e.pzogj,e.pzogj2} that 
$$\bigl[W^{+,I}(g_k)/\!\!\sim\bigr]\cap \pi(B)=\;\Phi_k\bigl[W^{+,I}(f_k)/\!\!\sim \bigr]\cap \pi(B).$$

By $(ii)$, we have $f_k^{n}(B)=f^n(B)$, for all $-1\leq n\leq N$. By $(iii)$, $(iv)$ and item A of \cref{s.eapro}, we have $f_k^{n}(B)\cap A^s=\emptyset$, for $n=-2,N+1,N+2$. Hence with $(v)$, we deduce that 
\begin{align*}\pi^{-1}\circ \pi (B)&=\bigcup_{-2\leq n\leq N+2}A^s\cap f_k^n(B)\end{align*}
Therefore, by definition of $\Psi_k$, we have  $\Psi_k=\Id$ on $A^s \setminus \pi^{-1}\circ \pi(B)$, hence $W^{+,I}(g_k)/\!\!\sim\;\;=W^{+,I}(f_k)/\!\!\sim$ outside $\pi(B)$, which ends the proof of the lemma.
\footnote{$\Psi_k$ does not restrict to  a diffeomorphism of $A^s$, but to a diffeomorphism of $A^s\setminus \pi^{-1}\circ \pi(B)$, and to a diffeomorphism of $B$. That is why we cannot write the following commuting diagram:
\begin{align*}
\xymatrix{A^s \ar[rr]^{\Psi_k} \ar[d]^\pi && A^s \ar[d]^\pi \\
           T\ar[rr]^{\Phi_k} && T}
          \end{align*}}
\end{proof}

\begin{lemma}[Sequential fragmentation lemma]
Let $B_1,...,B_\ell$ be closed sets and let $K$ be a compact set such that 
$$K\subset \bigcup_{1\leq s\leq \ell}\Int (B_s).$$
Let $\Phi_k\in \Diff^r(T)$ be a sequence of diffeomorphisms converging to $\Id$ such that $\Phi_k=\Id$ outside $K$. Then there exists $\Phi_{s,k}\to \Id$ in $\Diff^r(M)$, for each $1\leq s\leq \ell$, such that $\Phi_{s,k}=\Id$ outside $B_s$, and for large $k$,
\begin{align}
\Phi_k=\Phi_{\ell,k}\circ ...\circ \Phi_{1,k}\label{e.rgqrg}
\end{align}
\end{lemma}

\begin{proof}
Fix a family of closed sets $C_s\subset \Int B_s$ that covers $K$. We build the sequences $\Phi_{s,k}$ by induction on $s$ so that they satisfy the following additional hypotheses, for all $1\leq t\leq \ell$ and for large $k\in \NN$:
\begin{itemize}
\item $\Phi_k=\Phi_{t,k}\circ ...\circ \Phi_{1,k}$ by restriction to $C_1\cup ...\cup C_t$, 
\item $\Phi_{t,k}\circ ...\circ \Phi_{1,k}=\Id$ outside $K$. 
\end{itemize}
Notice that those two items at rank $\ell$ trivially imply \cref{e.rgqrg}.
Assume this is satisfied at rank $t$ by some sequences $\Phi_{s,k}$, for $1\leq s\leq t$. The sequence $\xi_k=\Phi_k\circ \Phi_{1,k}^{-1}\circ ...\circ \Phi_{t,k}^{-1}$ converges to $\Id$ in $\Diff^r(M)$. For large $k\in \NN$, we have $\xi_k=\Id$ by restriction to $C_1\cup ...\cup C_t$ and outside $K$. By a partition of unity, we build a sequence $\Phi_{t+1,k}$ of diffeomorphisms in $\Diff^r(M)$ converging to $\Id$ such that 
\begin{align*}\Phi_{t+1,k}&=\xi_k \mbox{ on $C_{t+1}$,}\\
\mbox{ and }\Phi_{t+1,k}&=\Id\mbox{ where $\xi_k=\Id$ and outside $B_{t+1}$.}
\end{align*} The two items above are again satisfied at rank $t+1$. This ends the proof by induction.
\end{proof}

\begin{corollary}
Let $f_k\to f$ and $W^+(f_k)\to W^+(f)$ such that $f^{\pm 1}=f_k^{\pm 1}$ on $A^s$. Let $\cG_k$ be a sequence of flags of manifolds in $T$ converging to $\cF=W^{+,I}(f)/\!\!\sim$ for the $C^r$-topology, that is, $\cG_k=\{\cG_{k,i}\}_{i\in I}$ is a flag of manifolds, for all $k$, and  $\cG_{k,i}\to W^{+,i}(f)/\!\!\sim$, for all $i$.

 Then there are sequences $g_k\to f$ and $W^+(g_k)\to W^+(f)$, such that $g_k^{\pm 1}=f_k^{\pm 1}$ outside $f^{N+2}(A^s)$, and such that $\cG_k=W^{+,I}(g_k)/\!\!\sim$.
\end{corollary}

\begin{proof}
Let $K$ be  a compact neighborhood of $W^+(f)/\!\!\sim$ contained in the union $\bigcup_{1\leq s\leq \ell}\Int (B_s)$. Let $\cF_k=W^{+,I}(f_k)/\!\!\sim$. The sequences of flags $\cF_k$ and $\cG_k$ both tend to $\cF$ for the $C^r$ topology. It is a folklore differential topology result that there exists a sequence $\Phi_k\in \Diff^r(T)$ that converges to $\Id$ such that $\cG_k=\Phi_k\cF_k$ and $\Phi_k=\Id$ outside $K$, for large $k\in \NN$. Let $\{\Phi_{s,k}\}_{1\leq s\leq \ell}$ be sequences of diffeomorphisms of $T$ given by the  sequential fragmentation lemma.

By \cref{l.pqojef}, we build inductively sequences $g_{j,k}$ and $W^+(g_{j,k})$  such that $g_{0,k}=f$,
$$W^+(g_{s,k})/\!\!\sim \;=\Phi_{j,k}\circ ... \circ \Phi_{1,k}(\cF_k),$$
and with \cref{r.porefjap}, for large $k$, $g_{s,k}^{\pm 1}=f_k^{\pm 1}$ outside $f^{N+2}(A^s)$, in particular $g_{s,k}^{\pm 1}=f^{\pm 1}$ on $A^s$. Take $g_k=g_{\ell, k}$ and we are done.
\end{proof}

\subsection{Proof of  \cref{p.pertpropsimple}}

By a partition of unity, one builds a sequence $\hat{f}_k\to f$ and finds a neighborhood $V$ of $\Orb_P$ such that 
\begin{itemize}
\item $\hat{f}_k^{\pm 1}=g_k^{\pm 1}$ on $V$,
\item $\hat{f}_k^{\pm 1}=h_k^{\pm 1}$ on $A^s,A^u$ and outside $U$, in particular we may assume that $\hat{f}_k^{\pm 1}=f^{\pm 1}$ on $A^s, A^u$, by \cref{c.pqeriogg}.
\end{itemize}

\begin{proposition}\label{p.fs}
There exists $f_k^s\to f$, $W^+(f_k^s)\to W^+(f)$ such that ${f_k^s}^{\pm 1} =  \hat{f}^{\pm 1}$ outside $f^{N+2}(A^s)$ and $W^{+,I}(f^s_k)=W^{+,I}(h^s_k)$ outside $U$. 
\end{proposition}

This proposition is proved at the end of this section. Replacing $f$ by $f^{-1}$, one deduces straightforwardly the symmetrical proposition:

\begin{proposition}\label{p.fu}
There exists $f_k^u\to f$, $W^-(f_k^u)\to W^-(f)$ such that ${f_k^u}^{\pm 1} =  \hat{f}^{\pm 1}$ outside $f^{-N-2}(A^u)$ and $W^{-,J}(f^u_k)=W^{-,J}(h^u_k)$ outside $U$. 
\end{proposition}

\begin{proof}[Proof of  \cref{p.pertpropsimple}]
By \cref{p.fs,p.fu} and item A in \cref{s.eapro}, for large $k\in \NN$, there is a well-defined diffeomorphism $f_k\in \Diff^r(M)$ such that 
\begin{align*}
f^{\pm 1}_k&=\hat{f}^{\pm 1}_k \mbox{ outside $f^{N+2}(A^s)$ and $f^{-N-2}(A^u)$} \\
f^{\pm 1}_k&=f^{s \pm 1}_k \mbox{ on $f^{N+2}(A^s)$,} \\
f^{\pm 1}_k&=f^{u \pm 1}_k \mbox{ on $f^{-N-2}(A^u)$} 
\end{align*}
By items B and C in \cref{s.eapro}, $f_k^{\pm 1}=\hat{f}_k^{\pm 1}$ outside $U$ hence $f_k^{\pm 1}=h_k^{\pm 1}$ outside $U$. Moreover, $f_k^{\pm 1}=\hat{f}_k^{\pm 1}=g_k^{\pm 1}$ on $V\setminus \bigl[f^{N+2}(A^s)\cup f^{-N-2}(A^u)\bigr]$, which is a neighborhood of  $\Orb_P$, since $A^s$ and $A^u$ are compact sets disjoint from $\Orb_P$. On the other hand, item B implies that  $f_k$ coincides with $f_k^s$ on a neighborhood of $W^+(f^s_k)$ and item $C$ implies that it coincides with $f_k^u$ on a neighborhood of $W^-(f^u_k)$, for large $k$. In particular, $W^{+}(f_k)=W^{+}(f^s_k)$ is a sequence of local strong stable manifolds, and $W^{+,I}(f_k)=W^{+,I}(f^s_k)$, which coincides with $W^{+,I}(h^s_k)$ outside $U$. The same happens on the unstable side, therefore the conclusions of \cref{p.pertpropsimple} are all satisfied.
\end{proof}

\begin{proof}[Proof of \cref{p.fs}]
Take $\cG_k=W^{+,I}(h_k)/\sim$. Then, by \cref{l.pqojef}, there are sequences $f^s_k\to f$, $W^+(f_k^s)\to W^+(f)$, such that ${f_k^s}^{\pm 1} =  \hat{f}_k^{\pm 1}$ outside $f^{N+2}(A^s)$ and, for large $k$, $W^{+,I}(f_k^s)/\!\!\sim\;=\;\cG_k=W^{+,I}(h_k)/\!\!\sim$. In other words, 
\begin{align}
{(f^s_k)}^{-2}W^{+,I}(f_k^s)=h_k^{-2}W^{+,I}(h_k)\mbox{ by restriction to $A^s$.} \label{e.2star}
\end{align}
As $\partial W^+(h_k)\subset A^s\cap h_k^{-2}W^+(h_k)=A^s\cap {(f_k^s)}^{-2}W^+(f_k^s)$, for large $k$, we may redefine a local strong stable manifold  $W^+(f_k^s)$ for $f_k^s$ as the union of disks delimited by $\partial W^+(h_k)$ in ${(f_k^s)}^{-2}W^+(f_k^s)$. 

Indeed, this new sequence $W^+(f_k^s)$ converges to $W^+(f)$ for the $C^r$-topology, and from the strict $f$-invariance of $W^+(f)$, we deduce that for large $k$, $W^+(f_k^s)\subset W^{ss}(f_k^s)$ is also strictly $f_k^s$-invariant. 

By \cref{e.2star} and by construction, we have $W^{+}(f^s_k)\cap A^s=W^{+}(h_k)\cap A^s$. Again with \cref{e.2star}, we deduce that
 $W^{+,I}(f^s_k)=W^{+,I}(h_k)$ by restriction to $A^s$.
 
 $A^s\cup U$ is a neighborhood of $W^+(f)$, hence $W^{+}(f^s_k), W^{+}(h_k)\subset A^s\cup U$, for large $k$. Therefore $W^{+,I}(f^s_k)=W^{+,I}(h_k)$ outside $U$.
\end{proof}

\section{Examples of applications.}\label{s.consequences}

We give in this section a few consequences of  Theorem~\ref{t.mainsimplestatement}. We prove Theorem~\ref{t.realeigen} which asserts that one can perturb a saddle of large period in order to turn its eigenvalues real, while preserving its invariant manifolds semi-locally.

Wen~\cite{W1} showed that the absence of a dominated splitting of index $i$ on limit sets of periodic orbits of same index allows to create homoclinic tangencies by small perturbations. To prove it, he showed that one obtains new saddles with small stable/unstable angles by $C^1$-pertubations, but a priori without knowledge of the homoclinic class to which the new saddles belong. Here we prove Theorem~\ref{t.dichotomysimple}, which gives a dichotomy between small angles and dominated splittings within homoclinic classes. Through that result, we find another way to the main theorem of~\cite{Gou}, and the more result \cref{t.homoclinictang}.

\subsection{Dichotomy between small angles and dominated splittings.}\label{s.dichotomy}

We recall that a saddle point $P$ is {\em homoclinically related} to another $Q$ if and only if the unstable manifold of each meets somewhere transversally the stable manifold of the other. The {\em homoclinic class} of a saddle $P$ is the closure of the saddles that are homoclinically related to $P$.  The {\em eigenvalues} of a saddle $P$ are the eigenvalues of the derivative of the first return map at $P$.

\begin{definition}\label{d.dominatedsplitting}
Let $f$ be a diffeomorphism of $M$ and $K$ be a compact invariant set. A splitting $TM_{|K}=E\oplus F$ of the tangent bundle above $K$ into two $Df$-invariant vector subbundles of constant dimensions is a {\em dominated splitting} if there exists an integer $N\in \NN$ such that, for any point $x\in K$, for any unit vectors $u\in E_x$ and $v\in F_x$ in the fibers of $E$ and $F$ above $x$, respectively, one has:
$$\|Df^N(u)\|<1/2.\|Df^N(v)\|.$$  
\end{definition}
In that case, we say the splitting is {\em $N$-dominated}. The smaller the number $N$, the stronger the domination. 

Theorem~\ref{t.dichotomysimple} is a generic consequence of the following proposition (see section~\ref{s.dichotomysimple}).

\begin{proposition}
\label{p.smallanglediffeo}
Let $f$ be a diffeomorphism of $M$ and $\epsilon>0$ be a real number. There exists an integer $N\in \NN$ such that for any
\begin{itemize}
\item saddle periodic point $P$ of period $p\geq N$ such that the corresponding stable/unstable splitting is not $N$-dominated,,
\item neighbourhood $U$ of the orbit $\Orb_P$ of $P$, 
\item number $\varrho>0$ and families of compact sets 
\begin{align*}K_i&\subset W^{i,ss}_\varrho(P,f)\setminus \Orb_P, \quad \mbox{ for all }  i\in I\\
 L_j&\subset W^{j,uu}_\varrho(P,f)\setminus \Orb_P,  \quad  \mbox{ for all } j\in J,
 \end{align*} 
 where $I$ and $J$  are the sets of the strong stable and unstable dimensions,
 \end{itemize}
there is a $C^1$-$\epsilon$-perturbation $g$ of $f$ such that
\begin{itemize}
\item  $f^{\pm 1}=g^{\pm 1}$ throughout $\Orb_P$ and outside $U$, 
\item the minimum stable/unstable angle for $g$ of some iterate $g^k(P)$ is less than $\epsilon$,
\item the eigenvalues of the first return map $Dg^p(P)$ are real, pairwise distinct and each of them has modulus less than $\epsilon$ or greater than $\epsilon^{-1}$,
\item  for all $(i,j)\in I\times J$, we have
 $$K_i\subset W^{ss,i}_\varrho(P,g)\mbox{ and }L_j\subset W^{uu,j}_\varrho(P,g).$$
\end{itemize}
\end{proposition}

The proof of Proposition~\ref{p.smallanglediffeo} is postponed until section~\ref{s.pathcocycles}.

Theorem 4.3 in~\cite{Gou} states that if the stable/unstable dominated splitting along a saddle is weak enough, then one may find a $C^1$-perturbation that creates a homoclinic tangency related to that saddle, while preserving a finite number of points in the strong stable/unstable manifolds of that saddle. During the process, the derivative of that saddle may have been modified. The technique introduced in this paper allows to create a tangency while preserving the derivative.

Indeed, under the hypothesis that there is a weak stable/unstable dominated splitting for some saddle $P$, one creates a small stable/unstable angle and pairwise distinct real eigenvalues of moduli less than $1/2$ or greater than $2$, after changing the derivative by  application of Theorem~\ref{t.mainsimplestatement} with some path $\cA_t$ of derivatives (see the proof of Proposition~\ref{p.smallanglediffeo} in section~\ref{s.pathcocycles}). Applying the techniques of the proof of ~\cite[Proposition~5.1]{Gou}, one finds another small $C^1$-perturbation on an arbitrarily small neighbourhood of $P$ that creates a tangency between its stable and unstable manifolds, without modifying the dynamics on a (smaller) neighbourhood of the orbit of $P$. That perturbation can be done preserving any preliminarily fixed finite set inside the strong stable or unstable manifolds of $P$. Then one may come back to the initial derivative applying again Theorem~\ref{t.mainsimplestatement} with the backwards path $\cA_{1-t}$. This sums up into:
 
\begin{theorem}
\label{t.homoclinictang}
Let $f$ be a diffeomorphism of $M$ and $\epsilon>0$ be a real number. There exists an integer $N\in \NN$ such that  if $P$ is a saddle point of period greater than $N$ and its corresponding stable/unstable splitting is not $N$-dominated, if $U$ is a neighbourhood of the orbit of $\Orb_P$ and $\Gamma\subset M$ is a finite set, then
\begin{itemize}
\item  there is a $C^1$ $\epsilon$-perturbation $g$ of $f$ such that $f^{\pm 1}=g^{\pm 1}$ throughout $\Orb_P$ and outside $U$, and such that the saddle $P$ admits a homoclinic tangency inside $U$ for $g$.
\item  the derivatives $Df$ and $Dg$ coincide along the orbit of $P$, 
\item for each $x\in \Gamma$, if $x$ is in the strong stable (resp. unstable) manifold of dimension $i$ of $\Orb_P$ for $f$, then $x$ is also the strong stable (resp. unstable) manifold of dimension $i$ of $\Orb_P$ for $g$.
\end{itemize}
\end{theorem}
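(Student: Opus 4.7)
The proof follows the three-step plan sketched in the paragraph preceding the theorem. Take $N$ large enough that for any saddle $x$ of period $\geq N$ whose stable/unstable splitting is not $N$-dominated, the proof of Proposition~\ref{p.smallanglediffeo} (see section~\ref{s.pathcocycles}) provides an explicit path $\gamma$ in $\Si_{I,J}$ of radius less than $\epsilon/6$, starting at $Df_{|X}$ and ending at a cocycle $\si_1$ whose first return map has pairwise distinct real eigenvalues of modulus less than $1/2$ or greater than $2$ and a small stable/unstable angle at some iterate; here $I,J$ denote the stable and unstable index sets of $x$. By the triangle inequality, the reverse path $\gamma^{-1}$ then has radius less than $\epsilon/3$. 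Choose a neighbourhood $\cU_0\subset\cU$ of the orbit of $x$ which is disjoint from $\Gamma$ and confined for $f$. Applying Theorem~\ref{t.mainsimplestatement} to $\gamma$ in $\cU_0$ yields a diffeomorphism $f_1$, less than $\epsilon/3$-close to $f$, equal to $f$ outside $\cU_0$, with $Df_{1|X}=\si_1$, and whose local strong stable/unstable manifolds outside $\cU_0$ coincide with those of $f$. In particular, every point of $\Gamma$ lying in an $i$-strong invariant manifold of $x$ for $f$ still lies in the corresponding manifold for $f_1$.

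Next, I would apply the tangency-creation technique of \cite[Proposition~5.1]{Gou} to $f_1$. Because $\si_1$ has well-separated real eigenvalues together with a small stable/unstable angle, the first fundamental domains of the stable and unstable manifolds of $x$ are large compared to the distance between them at the appropriate iterate; a $C^1$ $\epsilon/3$-perturbation $f_2$ of $f_1$, supported in an arbitrarily small neighbourhood $\cU_1\subset\cU_0$ of the orbit of $x$ and equal to $f_1$ on a still smaller neighbourhood of $X$, then bends one manifold into tangential contact with the other, creating a homoclinic tangency of $x$ inside $\cU_0$. Since $f_2=f_1$ near $X$ we have $Df_{2|X}=\si_1$, and the construction in \cite{Gou} can be arranged so that every point of the finite set $\Gamma$ remains in its corresponding strong invariant manifold of $x$.

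Finally, I apply Theorem~\ref{t.mainsimplestatement} once more, this time to the reverse path $\gamma^{-1}$ (which lies in $\Si_{I,J}$ and has radius less than $\epsilon/3$), inside a neighbourhood $\cU_2\subset\cU_1$ of the orbit of $x$ chosen small enough to avoid both the tangency produced in step~2 and the set $\Gamma$. This yields $g$: it is less than $\epsilon/3$-close to $f_2$, equal to $f_2$ outside $\cU_2$, satisfies $Dg_{|X}=Df_{|X}$, and preserves the local strong invariant manifolds of the orbit of $x$ outside $\cU_2$; in particular both the tangency and the pinning of $\Gamma$ survive. The composition of the three perturbations lies within $\epsilon$ of $f$, as required. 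The main technical difficulty is the careful nesting $\cU\supset\cU_0\supset\cU_1\supset\cU_2$: the tangency point must lie inside $\cU_0$ but strictly outside $\cU_2$, and the portions of strong invariant manifolds containing points of $\Gamma$ must lie outside every perturbation support. What makes this juggling possible is precisely the flexibility built into Theorem~\ref{t.mainsimplestatement}, which allows the perturbation neighbourhood to be taken arbitrarily small at no increase in $C^1$-size beyond the path radius, combined with the locality of the tangency construction of \cite{Gou} near a single iterate of $x$.
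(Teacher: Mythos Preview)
Your proposal is correct and follows essentially the same three-step approach sketched in the paper: apply Theorem~\ref{t.mainsimplestatement} along the path $\gamma$ built in the proof of Proposition~\ref{p.smallanglediffeo}, create the tangency via the techniques of \cite[Proposition~5.1]{Gou}, then undo the change of derivative by applying Theorem~\ref{t.mainsimplestatement} along $\gamma^{-1}$. Your added bookkeeping on the $\epsilon$-budget (the triangle-inequality bound $r(\gamma^{-1})\leq 2r(\gamma)$) and on the nesting $\cU\supset\cU_0\supset\cU_1\supset\cU_2$ is exactly what is implicit in the paper's sketch.
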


\subsection{Proof of Theorem~\ref{t.dichotomysimple}}\label{s.dichotomysimple}
Fix $p\in \NN\setminus{0}$ and $\epsilon>0$. 
Let $\cS_{p,\epsilon}$ be the set of diffeomorphisms $f$ such that for any periodic saddle point $P$ of period $p$, if the homoclinic class of $P$ has no dominated splitting of same index as $P$, then there is a saddle $Q$ in the homoclinic class of $P$ with same index as $P$ that has a minimum stable/unstable angle less than $\epsilon$ and pairwise distinct real eigenvalues of moduli less than $\epsilon$ or greater than $\epsilon^{-1}$.

\begin{lemma}\label{l.smallangles2}
For all $p\in \NN\setminus{0}$ and $\epsilon>0$, the set $\cS_{p,\epsilon}$ contains an open and dense set in $\Diff^1(M)$. 
\end{lemma}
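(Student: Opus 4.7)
The plan is to exhibit a $C^1$-open dense subset $\cO_{p,\epsilon}\subset \cS_{p,\epsilon}$. Define $\cO_{p,\epsilon}$ as the set of Kupka--Smale diffeomorphisms $f$ such that for every saddle $x$ of period $p$, \emph{either} (SA) there is a saddle $y$ of index $\mathrm{ind}(x)$ homoclinically related to $x$ whose minimum stable/unstable angle is less than $\epsilon$ and whose eigenvalues are real, pairwise distinct, with moduli outside $[\epsilon,\epsilon^{-1}]$, \emph{or} (DS) some compact filtrating neighborhood $K$ of the chain-recurrence class of $x$ has the property that its maximal invariant set admits a dominated splitting of index $\mathrm{ind}(x)$. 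Clause (SA) obviously implies the conclusion of Theorem~\ref{t.dichotomysimple} for $x$, and clause (DS) also does, since $H(x,f)$ is contained in the maximal invariant set of $K$, and dominated splittings restrict to invariant subsets.

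\textbf{Openness.} On a small $C^1$-neighborhood of a Kupka--Smale $f$, the saddles of period $p$ admit well-defined hyperbolic continuations, and no new ones appear. Clause (SA) is open: the small-angle and real-distinct-eigenvalue conditions are open, and the transverse intersections witnessing the homoclinic relation between $x$ and $y$ persist under $C^1$-perturbation. Clause (DS) is open because the maximal invariant set $\Lambda(K,g)=\bigcap_n g^n(K)$ in a fixed compact filtrating neighborhood $K$ varies upper-semicontinuously with $g$, and the $N$-domination condition is uniformly continuous on compact invariant sets.

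\textbf{Density.} Given $f\in\Diff^1(M)$, perturb to make it Kupka--Smale, so that the saddles $x_1,\ldots,x_k$ of period $p$ are finite in number and all hyperbolic. Let $N=N(\epsilon)$ be the integer provided by Proposition~\ref{p.smallanglediffeo}. Treat each $x_i$ independently. If $H(x_i,f)$ admits a dominated splitting of index $\mathrm{ind}(x_i)$, it extends by continuity to a compact filtrating neighborhood of the chain-recurrence class of $x_i$ (a standard construction, using that filtrating neighborhoods of chain-recurrence classes form a base of neighborhoods), and (DS) holds without any perturbation on this $x_i$. Otherwise, by a standard selection argument (the absence of $N$-domination on the homoclinic class $H(x_i,f)$ propagates, via density of periodic orbits in $H(x_i,f)$ and compactness, to the existence of a periodic saddle in $H(x_i,f)$ whose stable/unstable splitting fails to be $N$-dominated; moreover one may choose it of period arbitrarily large), pick a saddle $z_i$ homoclinically related to $x_i$ of index $\mathrm{ind}(x_i)$, of period $\geq N$, whose stable/unstable splitting is not $N$-dominated. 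Apply Proposition~\ref{p.smallanglediffeo} to $z_i$ on a small neighborhood $\cU_i$ of its orbit: this yields an $\epsilon$-perturbation that coincides with $f$ on the orbit of $z_i$ and outside $\cU_i$, preserves the local strong stable/unstable manifolds of $z_i$ outside $\cU_i$, and gives $z_i$ the required small-angle and eigenvalue properties. Since strong invariant manifolds of $z_i$ are preserved outside $\cU_i$, the transverse intersections establishing the homoclinic relation with $x_i$ (which we arrange to lie outside $\cU_i$ by shrinking $\cU_i$) survive, so $z_i\in H(x_i,g)$ still. Choose the $\cU_i$ pairwise disjoint and disjoint from the orbits of the $x_j$, which is possible since the $z_i$'s can be taken of arbitrarily large period and hence covered by arbitrarily thin tubular neighborhoods; the combined perturbation then realizes (SA) or (DS) at every $x_i$ simultaneously.

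\textbf{Main obstacle.} The delicate point is the selection of a saddle $z_i$ of large period and genuinely weak domination inside $H(x_i,f)$; this requires a Liao/Wen-type selecting argument translating the absence of a dominated splitting on the homoclinic class into the existence of a periodic orbit in the class realizing that failure. A secondary subtlety is identifying (DS) with an open condition, which is done by passing from $H(x_i,f)$ to the maximal invariant set in a filtrating neighborhood, using upper-semicontinuity of chain-recurrence classes; this step is routine but must not be skipped, since the property ``$H(x,f)$ admits a dominated splitting'' is not itself open in $f$.
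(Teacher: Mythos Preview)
Your overall strategy---split into an open ``small-angle'' clause and a ``dominated-splitting'' clause, then show density via Proposition~\ref{p.smallanglediffeo}---matches the paper's. However, your route to openness is more elaborate than necessary and introduces a genuine gap.

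\medskip

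\textbf{The gap in clause (DS).} You want (DS) to say: some filtrating neighbourhood $K$ of the \emph{chain-recurrence class} of $x$ has a dominated splitting on its maximal invariant set. For openness this is fine. But in your density argument you start from a dominated splitting on $H(x_i,f)$ and claim it ``extends by continuity to a compact filtrating neighbourhood of the chain-recurrence class''. A dominated splitting on $H(x_i,f)$ extends to the maximal invariant set of some small neighbourhood $U$ of $H(x_i,f)$; but the chain-recurrence class of $x_i$ may be strictly larger than $H(x_i,f)$, and then no filtrating neighbourhood of it need fit inside $U$. You would have to first perturb into the residual set where homoclinic classes coincide with chain-recurrence classes (Bonatti--Crovisier), and you do not do this. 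Without that, (DS) may simply fail for $f$ even though $H(x_i,f)$ carries a dominated splitting.

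\medskip

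\textbf{How the paper avoids this.} The paper's argument is shorter and sidesteps the issue entirely. It works on a neighbourhood $\cU$ of a Kupka--Smale $f$ where the period-$p$ saddles have continuations, sets $\Delta=\{g\in\cU: H(x_k(g),g)\text{ has no dominated splitting of index }\operatorname{ind}(x_k)\}$ and $\Delta_\epsilon=\{g\in\cU:$ the small-angle saddle exists in $H(x_k(g),g)\}$, and takes
\[
\cV_k=\Delta_\epsilon\cup\operatorname{int}(\cU\setminus\Delta).
\]
This is open by construction; there is no need to make ``$H(x,f)$ has a dominated splitting'' robust via filtrating neighbourhoods. Density follows because any $g\in\overline{\Delta}$ is approximated by some $g'\in\Delta$, and then Proposition~\ref{p.smallanglediffeo} (applied exactly as you describe) pushes $g'$ into $\Delta_\epsilon$.

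\medskip

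\textbf{On your ``main obstacle''.} You overstate the difficulty of selecting $z_i$. No Liao--Wen machinery is needed: the saddles homoclinically related to $x_k$ are dense in $H(x_k,f)$, only finitely many of them have period $\leq N$, and if all the remaining ones carried an $N$-dominated stable/unstable splitting then by closedness of $N$-domination the whole class would too. The paper simply asserts this in one line.
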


\begin{proof}[Proof of Theorem~\ref{t.dichotomysimple}:]
Take the residual set $\displaystyle \cR=\bigcap_{p,n\in \NN}\cS_{p,\frac{1}{n+1}}.$
\end{proof}

\begin{proof}[Proof of Lemma~\ref{l.smallangles2}:]
By the Kupka-Smale Theorem, there is a residual set $\cR$ of diffeomorphism whose periodic points are all hyperbolic, and consequently that have a finite number of periodic points of period $p$. 
Let $f\in \cR$.   Let $P_1,...,P_l$ be the saddle points of period $p$ for $f$. For all $g$ in some neighborhood $\cU_f$ of $f$, the saddle points of period $p$ for $g$ are the continuations $P_1(g),...,P_l(g)$ of the saddles $P_1,...,P_l$. 

\begin{claim}
For all $1\leq k\leq l$, there is an open and dense subset $\cV_k$ of $\cU_f$ such that, for all $g\in \cV_k$, the homoclinic class of the continuation of $P_k(g)$ either admits a dominated splitting of same index as $P_k$, or contains a saddle of same index as $P_k$ that has a minimum stable/unstable angle less than $\epsilon$ and pairwise distinct real eigenvalues of moduli less than $\epsilon$ or greater than $\epsilon^{-1}$.
\end{claim}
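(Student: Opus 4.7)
The plan is to set $\cV_k := B_k \cup \mathrm{int}(A_k)$, where $A_k \subseteq \cU$ consists of those $g$ such that $H(x_k(g),g)$ admits a dominated splitting of the same index as $x_k$, and $B_k \subseteq \cU$ consists of those $g$ such that $H(x_k(g),g)$ contains a saddle of the same index as $x_k$ enjoying the three properties of the claim (minimum stable/unstable angle less than $\epsilon$, pairwise distinct real eigenvalues, each of modulus less than $\epsilon$ or greater than $\epsilon^{-1}$). Every $g \in \cV_k$ manifestly satisfies the required dichotomy, and $\cV_k$ is open by construction; the substance of the argument is density.

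First I would check that $B_k$ is open in $\cU$. Given $g \in B_k$ with witness saddle $y$, all three conditions on $y$ are open conditions on the eigenvalues and eigendirections of the first-return map $Dg^{\mathrm{per}(y)}$, which persist under the hyperbolic continuation $y(g')$ for $g'$ close to $g$. Moreover, a fixed pair of transverse homoclinic points realizing the relation between $y$ and $x_k$ persists by robustness of transverse intersection and by continuity of the stable and unstable manifolds, so $y(g') \in H(x_k(g'),g')$ for all such $g'$.

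For density, it suffices to produce, near every $g \in \cU \setminus \mathrm{int}(A_k)$, some $g' \in B_k$. Since $g \notin \mathrm{int}(A_k)$, an arbitrarily small perturbation reduces us to the case $g \notin A_k$, i.e., $H(x_k(g),g)$ carries no index-$k$ dominated splitting. Let $N$ be the integer supplied by Proposition~\ref{p.smallanglediffeo} for the chosen $\epsilon$. Because $H(x_k(g),g)$ is the closure of saddles of the same index as $x_k$ that are homoclinically related to $x_k$ and carries no $N$-dominated splitting of that index, a standard Pliss–Ma\~n\'e-type selection argument yields a saddle $y \in H(x_k(g),g)$ of the same index, of period at least $N$, whose stable/unstable splitting is not $N$-dominated.

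I would then apply Proposition~\ref{p.smallanglediffeo} to $y$ on a neighborhood $\cU_y$ of its orbit chosen small enough that two preselected transverse homoclinic points between $y$ and $x_k(g)$ lie outside $\cU_y$. The resulting $\epsilon$-perturbation $g'$ coincides with $g$ along the orbit of $y$ and outside $\cU_y$, realizes the three target conditions at $y$, and preserves the local strong stable/unstable manifolds of the orbit of $y$ outside $\cU_y$. Since the index-$i$ strong stable or unstable manifold with $i$ equal to the stable or unstable dimension of $y$ is the entire stable or unstable manifold, the two witness intersection points remain transverse intersection points of $W^{s/u}_{g'}(y)$ with $W^{u/s}_{g'}(x_k(g'))$, so $y \in H(x_k(g'),g')$ and $g' \in B_k$. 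The main obstacle is precisely this globalization step: promoting the local manifold-preservation built into Proposition~\ref{p.smallanglediffeo} into genuine persistence of global transverse homoclinic intersections with $W^{u/s}(x_k)$. It is resolved by fixing the witness intersection points in advance and only then shrinking $\cU_y$ to avoid them, so that the equality $g' = g$ outside $\cU_y$ together with the manifold-preservation clause of Theorem~\ref{t.mainsimplestatement} forces both the witness points and the local invariant manifolds through them to be unaltered by the perturbation.
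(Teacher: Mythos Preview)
Your proof is correct and follows essentially the same route as the paper: the paper sets $\cV_k=\Delta_\epsilon\cup\operatorname{int}(\cU\setminus\Delta)$, which is exactly your $B_k\cup\operatorname{int}(A_k)$, and establishes density by applying Proposition~\ref{p.smallanglediffeo} to a suitable saddle $y_N$ in the class while preserving the homoclinic relation with $x_k$. Your write-up is somewhat more explicit about the openness of $B_k$ and about why the witness homoclinic intersections survive the perturbation, but the argument is the same.
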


\begin{proof} Let $\Delta\subset \cU_f$ be the set of diffeomorphisms such that the homoclinic class of the continuation $P_k(g)$ does not admit a dominated splitting of same index as $P_k$, and let $\Delta_\epsilon\subset \cU_f$ be the open set of diffeomorphisms such that that homoclinic class contains a saddle of same index as $P_k$ that has a stable/unstable angle strictly less than $\epsilon$ and pairwise distinct real eigenvalues of moduli less than $\epsilon$ or greater than $\epsilon^{-1}$. Let $f\in \Delta$. 

Obviously, the homoclinic class of $P_k(f)$ cannot be reduced to $P_k(f)$. 
For any $N\in \NN$, there is a periodic point $Q_N$ in that homoclinic class that has same index as $P_k$, that has period greater than $N$, and such that the stable/unstable splitting above the orbit of $Q_N$ is not $N$-dominated. 
By Proposition~\ref{p.smallanglediffeo}, there is an arbitrarily small perturbation of $g$  that turns  
 the minimum stable/unstable angle of some iterate of some $Q_N$ to be strictly less than $\epsilon$, and that turns  the eigenvalues of that $Q_N$ to be real, with pairwise distinct with moduli less than $\epsilon$ or greater than $\epsilon^{-1}$, while preserving the dynamics and preserving any previously fixed pair of compact sets $K^u, K^s$ (that do not intersect $\Orb_{Q_N}$) in the stable and unstable manifolds of $Q_N$. In particular, one can do that perturbation preserving the homoclinic relation between $Q_N$ and $P_k(g)$: one finds an arbitrartily small perturbation of $g\in \Delta$ in $\Delta_\epsilon$.

Thus $\Delta^c\cup \cl(\Delta_\epsilon)=\cU_f$, where $\Delta^c=\cU_f\setminus \Delta$. As a consequence, $\Delta^c\setminus \cl(\Delta_\epsilon)$ is open and $$\cV_k=\bigl[\Delta^c\setminus \cl(\Delta_\epsilon)\bigr]\cup \Delta_\epsilon$$ satisfies all the conclusions of the claim.   
\end{proof}
The intersection $\cV_f=\cap_{1\leq k\leq  l} \cV_k$ is an open and dense subset of $\cU_f$ and is included in $\cS_{p,\epsilon}$. The union of such $\cV_f$ is an open and dense subset of $\Diff^1(M)$ contained in $\cS_{p,\epsilon}$. This ends the proof of the Lemma.
\end{proof}

\subsection{Linear cocycles and dominated splittings.} Here we recall notations and tools from~\cite{BDP} and~\cite{BGV}.
Let $\pi\colon E \to \cB$ be a vector bundle of dimension $d$ above a compact base $\cB$ such that, for any point $x\in \cB$, the fiber $E_x$ above $x$ is a $d$-dimensional vector space endowed with a Euclidean metric $\|.\|$. One identifies each $x\in\cB$ with the zero of the corresponding fiber $E_x$. A {\em linear cocycle $\cA$ } on $E$ is a bijection of $E$ that sends each fiber $E_x$ on a fiber by a linear isomorphism. We say that $\cA$ is {\em bounded} by $C>1$, if for any unit vector $v\in E$, we have $C^{-1}<\|\cA(v)\|<C$.

In the following, a {\em subbundle} $F\subset E$, is a vector bundle with same base $\cB$ as $E$ such that, for all $x,y\in \cB$, the fibers $F_x$ and $F_y$ have same dimension. One defines then the {\em quotient vector bundle} $E/F$  as the bundle of base $\cB$ such that the fiber $\left(E/F\right)_x$ above $x$ is the set $\{e_x+F_x,e_x\in E_x\}$ of affine subspaces of $E_x$ directed by $F_x$. The bundle $F$ is endowed with the restricted metric $\|.\|_F$ and the norm of any element $e_x+F_x$ of $E/F$ is defined by the minimum of the norms of the vectors of  $e_x+F_x$. If $G$ is another subbundle of $E$, then one defines the vector subbundle $G/F\subset E/F$ as the image of $G$ by the canonical projection $E \to E/F$.

If $F$ is a subbundle invariant for the linear cocycle $\cA$ (that is, $\cA(F)=F$), 
 then $\cA$ induces canonically a {\em restricted cocycle} $\cA_{|F}$, and a {\em quotient cocycle} $\cA_{/F}$ defined on the quotient $E/F$ by $\cA_{/F}(e_x+F_x)=\cA(e_x+F_x)=\cA(e_x)+F_{\cA(x)}$. If $G$ is another invariant subbundle, then $G/F$ is an invariant subbundle for $\cA_{/F}$.
 
 \begin{remark}
 If $\cA$ is bounded by some constant $C>1$, then so are the restriction $\cA_{|F}$ and the quotient $\cA_{/F}$.
 \end{remark}

We use the natural notions of transverse subbundles and direct sum of transverse subbundles.
The following definition generalizes the definition given in the previous section for diffeomorphisms.
Let $\cA$ be a linear cocycle on a bundle $E$, and let $E=F\oplus G$ a splitting into two subbundles invariant by $\cA$. It is a  {\em dominated splitting} if and only if  there exists $N$ such that, for any point $x \in \cB$, for any unit vectors $u \in F_x$, $v\in G_x$ in the tangent fiber above $x$, we have $$\|\cA^N(u)\|<1/2.\|\cA^N(v)\|.$$
Given such $N$, one says that the splitting  $F\oplus G$ is {\em $N$-dominated}. The strength of a dominated splitting is given by the minimum of such $N$. The bigger that minimum, the weaker the domination. 

\subsection{Isotopic perturbation results on cocycles.}\label{s.pathcocycles}

A few perturbation results on cocycles are proved in~\cite{BGV} and~\cite{Gou}. Here we want to show that these perturbations can actually be reached through isotopies of cocycles that satisfy good properties, namely properties that will put us under the assumptions of Theorem~\ref{t.mainsimplestatement}.
 
To any tuple $(A_1,...,A_p)$ of matrices of $GL(d,\RR)$ one canonically associates the linear cocycle $\cA$ on the bundle $\cE=\{1,...,p\}\times \RR^d$ that sends the $i$-th fiber on the $(i+1)$-th fiber by the linear map of matrix $A_i$, and that sends the $p$-th fiber on the first by $A_p$.  The we say that $\cA$ is a {\em saddle cocycle} if and only if all the moduli of the eigenvalues of the product $A_p  ... A_1$ are different from $1$, and if there are some that are greater than $1$ and others that are less than $1$.  The splitting $\cE=E^s\oplus E^u$ into the stable bundle $E^s$ and the unstable one $E^u$ is called the {\em stable/unstable splitting}.

\medskip

Notice that Theorem~\ref{t.realeigen} is a straightforward consequence of Theorem~\ref{t.mainsimplestatement} and the following proposition about getting real eigenvalues:
\begin{proposition}\label{p.realeigen}
Let $\epsilon>0$, $C>1$ and $d\in \NN$. There exists an integer $N\in \NN$ such that,  for  any $p\geq N$ and any tuple $(A_1,...,A_p)$ of matrices in $GL(d,\RR)$, all bounded by $C$ (i.e. $\|A_i\|,\|A_i^{-1}\|<C$), it holds: 

\medskip

there is a path $\bigl\{\cA_{t}=(A_{1,t},\ldots,A_{p,t})\bigr\}_{t\in [0,1]}$ in $GL(d,\RR)^p$ such that
\begin{itemize}
\item $\cA_0=(A_1,...,A_p)$.
\item The radius of the path $\cA_t$ is less than $\epsilon$, that is,
$$\max_{1\leq n\leq p\atop t\in[0,1]}\left\{\|A_{n,t}-A_{n,0}\|, \|A^{-1}_{n,t}-A^{-1}_{n,0}\|\right\}<\epsilon.$$ 
\item  For all $t\in [0,1]$, the moduli of the eigenvalues of the product $B_t=A_{p,t}A_{p-1,t}...A_{1,t}$ (counted with multiplicity) coincide with the moduli of those of $B_0$ and the eigenvalues of $B_1$ are real.
\end{itemize}
 \end{proposition}

We state a second Proposition about reaching through an isotopy eigenvalues that all have moduli less than $\epsilon$ or more than $\epsilon^{-1}$. 

\begin{proposition}\label{p.hugeeigen}
Let $\epsilon>0$, $C>1$ and $d\in \NN$. There exists an integer $N\in \NN$ such that, for  any $p\geq N$ and any tuple $(A_1,...,A_p)$ of matrices in $GL(d,\RR)$, all bounded by $C$, if the moduli of the eigenvalues of the product $\prod A_k$ are pairwise distinct, then it holds: 

\medskip

there is a path $\bigl\{\cA_{t}=(A_{1,t},\ldots,A_{p,t})\bigr\}_{t\in [0,1]}$ in $GL(d,\RR)^p$ such that
\begin{itemize}
\item $\cA_0=(A_1,...,A_p)$.
\item The radius of the path $\cA_t$ is less than $\epsilon$, 
\item For all $t\in [0,1]$, the moduli of the eigenvalues of  $B_t=A_{p,t}...A_{1,t}$ are pairwise distinct and different from $1$ and the eigenvalues of $B_1$ have moduli less than $\epsilon$ or greater than $\epsilon^{-1}$.
\end{itemize}
 \end{proposition}

The third one is about obtaining a small angle in the absence of dominated splitting:

\begin{proposition}\label{p.smallangle}
Let $\epsilon>0$, $C>1$ and $d\in \NN$. There exists an integer $N\in \NN$ such that, for any $p\geq N$ and any tuple $(A_1,...,A_p)$ of matrices in $GL(d,\RR)$, all bounded by $C$, it holds:
\begin{itemize}
\item if the linear cocycle associated to it is a saddle cocycle such that its stable/unstable splitting is not $N$-dominated, 
\item if the eigenvalues of the product $A_p\times ...\times A_1$ are all real, 
\end{itemize}
there is a path $\bigl\{\cA_{t}=(A_{1,t},\ldots,A_{p,t})\bigr\}_{t\in [0,1]}$ in $GL(d,\RR)^p$ such that
\begin{itemize}
\item $\cA_0=(A_1,...,A_p)$.
\item The radius of the path $\cA_t$ is less than $\epsilon$, 
\item  For all $t\in [0,1]$, the eigenvalues of $B_t=A_{p,t}...A_{1,t}$ (counted with multiplicity) are equal to those of $B_0$.
\item The stable/unstable splitting of the cocycle associated to  $\cA_1$ has a minimum angle less than $\epsilon$.
\end{itemize}
\end{proposition}

\begin{proof}[Proof of Proposition~\ref{p.smallanglediffeo}:] Since it poses no difficulty, we only sketch it. One first applies Proposition~\ref{p.realeigen} to obtain a path that joins the cocycle corresponding to the derivative $Df_{|\Orb_P}$ along the orbit $\Orb_P$ of $P$ to a cocycle such that its eigenvalues are all real. Then adding an arbitrarily small path, one may suppose that the moduli of these eigenvalues are pairwise distinct. With Proposition~\ref{p.hugeeigen}, we prolong that path to obtain eigenvalues that have moduli less than $\epsilon$ or greater than $\epsilon^{-1}$. Remember that a weak dominated splitting remains a weak dominated splitting after perturbation, if it still exists. 
Hence, we can use Proposition~\ref{p.smallangle} to get a small angle. This provides us a path of small radius that joins the initial derivative to a cocycle that has all wanted properties. One finally applies Theorem~\ref{t.mainsimplestatementmoregeneral} to conclude the proof.
\end{proof}

\subsubsection{Proof of Proposition~\ref{p.realeigen}.} 

\begin{proof}[The dimension $d=2$ case: ]
First notice that, if the determinant of the product $A_{p}...A_{1}$ is negative, then the eigenvalues are already real and we are done. 

If not, one finds a $p$-periodic sequence of isometries $J_n$ of $\RR^2$, and a sequence of integers $C^{-1}\leq \lambda_n<C$, such that the matrix $\hat{A}_n=\lambda_n.J_nA_nJ_{n+1}^{-1}$ has determinant $1$. Notice that the product $\hat{A}_{p}...\hat{A}_{1}$ has real eigenvalues if and only if the product $A_{p}...A_{1}$ has real eigenvalues. 

Assume we have a path $\hat{\cA}_t=(\hat{A}_{1,t},...,\hat{A}_{p,t})$ of diameter less than $\hat{\epsilon}=C^{-1}\epsilon$, such that it holds
\begin{itemize}
\item  $\hat{\cA}_0=(\hat{A}_1,...,\hat{A}_p)$,
\item for all $t\in [0,1]$, the moduli of the eigenvalues of the product $\hat{B}_t=\hat{A}_{p,t}\hat{A}_{p-1,t}...\hat{A}_{1,t}$ coincide with the moduli of those of $\hat{B}_0$,
\item the eigenvalues of $\hat{B}_1$ are real.
\end{itemize}
Then the path $\cA_t=(A_{1,t},...,A_{p,t})$, where $A_{n,t}=\lambda_n^{-1}.J_n^{-1}\hat{A}_{n,t}J_{n+1}$, clearly satisfies all the conclusions of \cref{p.realeigen}. 
Therefore, it is enough to solve \cref{p.realeigen} for the $A_n\in SL(2,\RR)$ case. \cite[lemme 6.6]{BoCro} easily answers that case:

\begin{lemma}[Bonatti, Crovisier]\label{l.dim2}
For any $\varepsilon>0$, there exists $N(\varepsilon) \geq 1$ such that, for any integer $p \geq N(\varepsilon)$ and any finite sequence $A_1,...,A_p$ of elements in $SL(2,\RR)$, there exists a sequence $\alpha_1,...,\alpha_p$ in $]-\varepsilon,\varepsilon[$ such that the following assertion holds:

for any $i \in \{1,...,p\}$ if we denote by $B_i=R_{\alpha_i} \circ A_i$ the composition of $A_i$ with the rotation $R_{\alpha_i}$ of angle $\alpha_i$, then the matrix $B_p \circ B_{p-1} \circ \cdots \circ B_1$ has real eigenvalues.
\end{lemma}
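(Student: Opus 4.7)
Since $B_p\cdots B_1 \in SL(2,\RR)$, its eigenvalues are real if and only if $|\mathrm{tr}(B_p\cdots B_1)|\geq 2$. The plan is a topological argument on the one-parameter diagonal perturbation: set $\alpha_1=\cdots=\alpha_p=\beta$ and consider the loop
\[
\Phi\colon \RR/2\pi\ZZ \to SL(2,\RR),\qquad \beta\mapsto R_\beta A_p\,R_\beta A_{p-1}\cdots R_\beta A_1,
\]
with trace $F(\beta)=\mathrm{tr}(\Phi(\beta))$. It suffices to exhibit $\beta^*\in(-\varepsilon,\varepsilon)$ with $|F(\beta^*)|\geq 2$; taking $\alpha_i=\beta^*$ for every $i$ then realises the conclusion.

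\textbf{Existence of a crossing via topology.} The loop $\Phi$ has degree $p$ in $\pi_1(SL(2,\RR))\cong\ZZ$: pointwise matrix multiplication of loops induces addition on the (abelian) fundamental group of the topological group $SL(2,\RR)$, and each loop $\beta\mapsto R_\beta A_i$ is right-translation of the generator $\beta\mapsto R_\beta$ and therefore has degree $1$. On the other hand, the elliptic open set $E=\{X\in SL(2,\RR):|\mathrm{tr}X|<2\}$ is a disjoint union of two contractible components, each identified with $\HH^2\times(0,\pi)$ via (fixed point in $\HH^2$, signed rotation angle). Hence $\pi_1(E)=0$, the inclusion $E\hookrightarrow SL(2,\RR)$ is null-homotopic on $\pi_1$, and so the degree-$p$ loop $\Phi$ cannot be contained in $E$. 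It must therefore cross the parabolic locus $\{|\mathrm{tr}|=2\}$, producing some $\beta^*$ with $|F(\beta^*)|=2$. A sharper lift-and-count argument in the universal cover $\widetilde{SL(2,\RR)}$ shows that, going from one fundamental domain to the $p$-th, the lifted path crosses the lifted parabolic sheets at least $2p$ times (the generating loop $\beta\mapsto R_\beta$ already crosses both components $\{\mathrm{tr}=2\}$ and $\{\mathrm{tr}=-2\}$, at $\beta=0$ and $\beta=\pi$ respectively).

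\textbf{Quantitative density.} Since $R_\beta=\cos\beta\,I+\sin\beta\,J$ with $J=\bigl(\begin{smallmatrix}0&-1\\1&0\end{smallmatrix}\bigr)$, the function $F(\beta)$ expands as a real trigonometric polynomial in $\beta$ of degree at most $p$, so each level set $\{F=\pm 2\}$ contains at most $O(p)$ points in $\RR/2\pi\ZZ$. Combined with the lower bound of at least $2p$ crossings, the successive crossings are on average $O(1/p)$-close, so choosing $N(\varepsilon)$ of order $1/\varepsilon$ yields the desired $\beta^*\in(-\varepsilon,\varepsilon)$, and setting $\alpha_i=\beta^*$ makes the product parabolic or hyperbolic, hence with real eigenvalues.

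\textbf{Main obstacle.} The delicate step is passing from the \emph{average} spacing of crossings to a guaranteed crossing in the \emph{specific} small arc $(-\varepsilon,\varepsilon)$: the level-set crossings of a trigonometric polynomial of degree $p$ may in principle cluster, leaving a long empty arc. Two natural routes avoid this. The first is a finer analysis of the Fourier coefficients of $F$ using, e.g., the $KAK$ or Iwasawa decomposition of the factors $A_i$, which controls the oscillation of $F$ precisely. The second is to abandon the diagonal perturbation and exploit the full $p$-dimensional freedom $(\alpha_1,\ldots,\alpha_p)\in(-\varepsilon,\varepsilon)^p$: a degree/transversality argument on the map $\alpha\mapsto\prod R_{\alpha_k}A_k$ shows that the image of the cube must meet the parabolic locus once $p\,\varepsilon$ exceeds a universal constant, even when restricted to a small neighbourhood of the origin. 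Either way, $N(\varepsilon)$ of order $1/\varepsilon$ should suffice.
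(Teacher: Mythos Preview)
First, note that the paper does \emph{not} prove this lemma: it is quoted verbatim from Bonatti--Crovisier \cite[lemme 6.6]{BoCro} and used as a black box to deduce Proposition~\ref{p.realeigen} in dimension~$2$. So there is no ``paper's own proof'' to compare with.

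Your topological setup is sound. The degree computation is correct, and the polar decomposition $M=R_{\Theta}P$ with $\mathrm{tr}(M)=\mathrm{tr}(P)\cos\Theta$ gives a clean justification of the ``at least $2p$ crossings'' claim: the $S^1$-coordinate $\Theta(\beta)$ of $\Phi(\beta)$ satisfies $\Theta(\beta+2\pi)=\Theta(\beta)+2\pi p$ by the degree, and whenever $\Theta(\beta)\in\pi\ZZ$ one has $|\mathrm{tr}|=\mathrm{tr}(P)\geq 2$. So the existence of \emph{some} $\beta^*$ with real eigenvalues is fine.

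But the proof is genuinely incomplete at exactly the point you flag. Knowing that a degree-$p$ trigonometric polynomial has at least $2p$ and at most $4p$ level crossings in a period says nothing about whether one of them lies in the \emph{prescribed} arc $(-\varepsilon,\varepsilon)$: those crossings can cluster far from $0$, and neither average spacing nor the pigeonhole principle forces a crossing near the origin. Your two proposed fixes are only gestures. The ``finer Fourier analysis via $KAK$'' is not a recognisable argument; there is no uniform control on the Fourier coefficients of $F$ since the $A_i$ are arbitrary in $SL(2,\RR)$ with no norm bound. The ``degree/transversality on the full cube'' is closer in spirit to what is needed, but the sentence ``the image of the cube must meet the parabolic locus once $p\varepsilon$ exceeds a universal constant'' is exactly the statement to be proved, not a proof of it.

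If you want to repair the argument, the natural route is to work with the lifted action on $\RR$ covering $\RR P^1=\RR/\pi\ZZ$. Writing $H(\alpha,x)=\alpha_p+\tilde A_p(\alpha_{p-1}+\tilde A_{p-1}(\cdots))$, one has monotonicity in each $\alpha_i$ and the exact equivariance $H(\alpha+\pi e_i,x)=H(\alpha,x)+\pi$; the product has real eigenvalues iff $H(\alpha,x)-x\in\pi\ZZ$ for some $x$. The point is to exploit \emph{all} $p$ coordinates, not just the diagonal, to force this displacement across a multiple of $\pi$ with each $|\alpha_i|<\varepsilon$. This is what Bonatti--Crovisier do; you should consult their proof rather than try to salvage the diagonal-only approach, which does not obviously work.
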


Under the hypothesis of the lemma, let $\alpha_1,\ldots,\alpha_p$ be a corresponding sequence. For all $1\leq i\leq p$, define $A_{t,i}=R_{t.\alpha_i} \circ A_i$, and let $t_0$ be the least positive number such that the matrix $A_{t,p}  \circ \cdots \circ A_{t,1}$ has real eigenvalues. Then the path  $\left\{(A_{t,1},...,A_{t,p})\right\}_{t\in [0,t_0]}$ satisfies the conclusions of Proposition~\ref{p.realeigen}. 

This ends the proof of the dimension $2$ case. \end{proof}

\begin{proof}[Proof of Proposition~\ref{p.realeigen} in any dimensions] Consider the linear cocycle $\cA$ associated to the sequence $A_1,...,A_p$ on the bundle $\cE=\{1,...,p\}\times \RR^d$.
 If some eigenvalue of the product $A_p\ldots A_1$, that is the first return map, is not real, there is a dimension $2$ invariant subbundle $F$ of $\cE$ that corresponds to the corresponding pair of complex conjugated eigenvalues. Choosing orthonormal basis in each fibre of $F$ and completing by a basis of the orthonormal bundle $F^\perp$, the linear cocycle $\cA$ writes in those bases as a sequence of matrices of the form:
$$
\left(
\begin{array}{cc}
A_{|F,i}& B\\
0& A^\perp_{F,i}
\end{array}
\right).
$$
Using the proposition in dimension $2$, one may choose a path $\cA_{|F,t}$ of automorphisms  of $\cF$ ending at $\cA_{|F}$ such that the first return map of $\cA_{|F,0}$ has real eigenvalues. Denote by $\cA_t$ the linear cocycle corresponding to the sequences of the matrices
$$
\left(
\begin{array}{cc}
A_{|F,t,i}& B\\
0& A^\perp_{F,i}
\end{array}
\right).
$$
This defines a path of small radius that joins the initial automorphism to an automorphism where two of the eigenvalues have turned real. The other eigenvalues are given by the product of the blocks $A^\perp_{F,i} $, therefore did not change.
One may need to iterate that process at most $d/2$ times to turn all eigenvalues real, by concatenation of small paths. This ends the proof of the proposition. 
\end{proof}

\subsubsection{Proof of Proposition~\ref{p.hugeeigen}.}
As in the previous proof, one considers the linear cocycle $\cA$ associated to the sequence $A_1,...,A_p$ on the bundle $\cE=\{1,...,p\}\times \RR^d$. Let $\cE=E^s\oplus E^u$ be the stable/unstable splitting for the cocycle $\cA$. Choosing an orthonormal basis in each fibre of $E^s$ and completing by a basis of the orthonormal bundle $E^{s\perp}$, the linear cocycle $\cA$ writes in those bases as a sequence of matrices of the form:
$$
\left(
\begin{array}{cc}
A_{|E^s,i}& B\\
0& A^\perp_{E^s,i}
\end{array}
\right).
$$
Let $0<t\leq1$. Let $\cA_t$ be the cocycle obtained from $\cA$  multiplying each matrix $A_{|E^s,i}$ by $t^{1/p}$. One easily checks that the stable eigenvalues of $\cA_t$ are those of  $\cA$ multiplied by $t$, while the unstable eigenvalues remain unchanged. All stable eigenvalues for $\cA_{\epsilon}$ are less than $\epsilon$ and, for $p$ big, the path $\{\cA_t\}_{t\in[\epsilon,1]}$ is small. One can do the same for the unstable eigenvalues of $\cA_{\epsilon}$ and obtain another path. The concatenation of both paths ends the proof of the proposition.

\subsubsection{Proof of Proposition~\ref{p.smallangle}.}
We show it by induction on the dimension $d$. We first restate ~\cite[Lemma 4.4]{BDP}:

\begin{lemma}[Bonatti, D\'iaz, Pujals] \label{l.bdp4.4}
Let $C>1$ and $d\in \RR$. There exists a mapping $\phi_{C,d}\colon \NN \to \NN$ such that, for any linear cocycle $\cA$ bounded by $C$ on a $d$-dimensional bundle $E$, the following holds for all $N\in \NN$: if an invariant splitting $E=F\oplus G$ is not $\phi_{C,d}(N)$-dominated for $\cA$, and if $H\subset F$ (resp. $H\subset G$) is an invariant subbundle, then 

\begin{itemize}
\item either the splitting $H\oplus G$ (resp. $F\oplus H$) is not $N$-dominated for the restriction $\cA_{|H\oplus G}$ (resp. $\cA_{|F \oplus H})$,
\item or $F/H\oplus G/H$ is not  $N$-dominated for the quotient $\cA_{/H}$.
\end{itemize}
\end{lemma}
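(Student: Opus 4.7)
We prove the contrapositive of Lemma~\ref{l.bdp4.4} in the case $H\subset F$ (the case $H\subset G$ is entirely symmetric): if $\cA_{|H\oplus G}$ admits $H\oplus G$ as an $M$-dominated splitting and $\cA_{/H}$ admits $F/H\oplus G/H$ as an $M$-dominated splitting, then $F\oplus G$ is $KM$-dominated for $\cA$ for some $K=K(C,d,M)$ with $K\to\infty$ as $M\to\infty$; the function $\phi_{C,d}$ is then defined as any inverse of $M\mapsto KM$. Fix unit vectors $u\in F_x$ and $v\in G_x$ and decompose $u=u_H+u'$ fiberwise-orthogonally within $F_x$, with $u_H\in H_x$ and $u'\in F_x\ominus H_x$. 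The aim is to show $\|\cA^{KM}(u)\|\leq \tfrac12\|\cA^{KM}(v)\|$.

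For $u_H$, iterating the restriction $M$-domination $K$ times gives $\|\cA^{KM}(u_H)\|\leq 2^{-K}\|\cA^{KM}(v)\|$ directly. For $u'$, I work in the (non-invariant) orthogonal splitting $F=H\oplus(F\ominus H)$, in which $\cA_{|F}$ is block upper-triangular with diagonal entries $(\cA|_H,Q)$, where $Q$ is a representative of $\cA_{/H}|_{F/H}$ via the orthogonal sections, and off-diagonal block $B$ of norm $\leq C$. Expanding $\cA^{KM}_{|F}$ on $u'$ yields a \emph{pure-quotient piece} $Q^{[0,KM]}(u')\in F\ominus H$ and an \emph{$H$-piece} equal to $\sum_{k=0}^{KM-1}(\cA|_H)^{[k+1,KM]}\circ B^{f^kx}\circ Q^{[0,k]}(u')$. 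The pure-quotient piece has norm $\|\cA_{/H}^{KM}(\pi(u'))\|\leq 2^{-K}\|\pi(v)\|^{-1}\|\cA^{KM}(v)\|$ by $K$-fold quotient domination.

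The key observation for the $H$-piece is that \emph{both} dominations apply simultaneously to each cross-term: for the $k$-th term, quotient domination on the first $\lfloor k/M\rfloor$ blocks of $M$ iterates yields $\|Q^{[0,k]}(u')\|\leq 2^{-\lfloor k/M\rfloor}\|\cA^k(v)\|/\|\pi(v)\|$, while restriction domination on the last $\lfloor(KM-k-1)/M\rfloor$ blocks yields $\|(\cA|_H)^{[k+1,KM]}h\|/\|h\|\leq 2^{-\lfloor(KM-k-1)/M\rfloor}\|\cA^{KM}(v)\|/\|\cA^{k+1}(v)\|$ for $h\in H$. Combining with $\|B\|\leq C$ and $\|\cA^k(v)\|/\|\cA^{k+1}(v)\|\leq C$, each cross-term has norm at most $4\,C^2\,2^{-K}\|\cA^{KM}(v)\|/\|\pi(v)\|$ (the factor $4$ absorbs the two flooring losses, since $\lfloor k/M\rfloor+\lfloor(KM-k-1)/M\rfloor\geq K-2$). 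Summing the $KM$ cross-terms, the $H$-piece has norm at most $4\,KM\,C^2\,2^{-K}\|\cA^{KM}(v)\|/\|\pi(v)\|$, less than $\tfrac14\|\pi(v)\|^{-1}\|\cA^{KM}(v)\|$ once $K\geq K_0(C,d,M)$.

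Finally, $\|\pi(v)\|^{-1}$ is the inverse sine of the angle between $G$ and $H$; since $H\subset F$ and $F\oplus G$ is an invariant continuous splitting over a compact base with $\cA$ of bounded norm, this angle is bounded below by some $\alpha(C,d)>0$, absorbed in the final choice of $\phi_{C,d}$. Combining everything, $\|\cA^{KM}(u)\|<\tfrac12\|\cA^{KM}(v)\|$ for $K$ sufficiently large in terms of $C,d,M$. The main obstacle I anticipate is the bookkeeping that recovers the full exponential decay $2^{-K}$ in each cross-term: it relies on simultaneously exploiting restriction domination on the tail $H$-iterations and quotient domination on the head quotient-iterations, so that the powers $2^{-i}$ and $2^{-j}$ add. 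Once that observation is in place, the polynomial-in-$KM$ count of cross-terms and the bounded-cocycle factors are easily absorbed, and the angle lower bound on $F\oplus G$ takes care of the remaining quotient-norm correction.
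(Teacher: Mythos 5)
Your overall strategy — prove the contrapositive by splitting $u=u_H+u'$ in the orthogonal frame $F=H\oplus(F\ominus H)$, write $\cA_{|F}$ in block upper-triangular form, expand the $KM$-fold product into a pure-quotient term plus a Duhamel-type sum of cross-terms, and beat everything with the exponential factor $2^{-K}$ — is sound, and is essentially the standard way to establish this sort of ``transitivity'' of domination. (The paper itself does not give a proof; it cites \cite{BDP}, whose argument is of the same block-triangular flavour.) However, two of your explicit estimates are wrong, and one of them is backed by a genuinely false justification.

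The serious issue is your claim that $\|\pi(v)\|^{-1}$ is bounded by some $\alpha(C,d)^{-1}$ ``since $H\subset F$ and $F\oplus G$ is an invariant continuous splitting over a compact base with $\cA$ of bounded norm.'' This reasoning is incorrect: the lemma is a \emph{uniform, quantitative} statement over \emph{all} cocycles bounded by $C$ on a $d$-dimensional bundle, and for such a family the angle between two complementary invariant subbundles can be made arbitrarily small while keeping the cocycle bounded. Compactness and continuity of a single splitting say nothing here. What actually rescues you is the hypothesis you have not yet used at that point, namely that $H\oplus G$ is $M$-dominated: if $h\in H_{x}$ and $g\in G_{x}$ are unit vectors with $\|h-g\|=\ep$, then $\|\cA^{M}(h-g)\|\leq C^{M}\ep$ while $\|\cA^{M}h-\cA^{M}g\|\geq\|\cA^{M}g\|-\|\cA^{M}h\|>\tfrac12\|\cA^{M}g\|\geq\tfrac12 C^{-M}$, so $\ep>\tfrac12 C^{-2M}$, whence $\|\pi(v)\|\geq\tfrac14 C^{-2M}$. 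The bound therefore depends on $M$, not just on $C$ and $d$, and you should replace $\alpha(C,d)$ by $\alpha(C,d,M)$ throughout.

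The lighter issue: your cross-term estimate $4\,C^2\,2^{-K}\|\cA^{KM}(v)\|/\|\pi(v)\|$ only accounts for the loss of $2$ in the exponent from the two floors. It silently drops the \emph{remainder iterations}: passing from $M\lfloor k/M\rfloor$ steps to $k$ steps (and the analogous gap on the tail) costs factors of the form $C^{O(M)}$, since the cocycle (and its restriction and quotient) is only bounded by $C$ over the at most $M-1$ leftover steps. The true per-term constant is thus of order $C^{O(M)}$, not $4C^{2}$. Neither of these repairs threatens the conclusion — after the fix, each cross-term is at most $\mathrm{poly}(C^{M})\,2^{-K}\,\|\cA^{KM}(v)\|$ and the sum of $KM$ of them is beaten by $2^{-K}$ for $K\geq K_0(C,d,M)$ — but the thresholds $K_0$ and the function $\phi_{C,d}$ you extract are genuinely $M$-dependent and should be presented as such. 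Finally, be a little more careful when ``inverting'' $M\mapsto K(M)M$: with the paper's definition, $m$-domination implies $\ell m$-domination for $\ell\geq 1$ but not $m'$-domination for arbitrary $m'>m$, so either you should phrase the conclusion as ``$F\oplus G$ is $n$-dominated for every multiple $n$ of $K(M)M$'' and take $\phi_{C,d}(N)$ to be the largest $M$ with $K(M)M\mid N$, or pay a further bounded factor to pass to arbitrary $N\geq K(M)M$.
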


\begin{proof}[Proof in dimension 2:] This is basically~\cite[Lemma~7.10]{BDV} by isotopy. Notice that the perturbations done in the proof of that lemma can be obtained by an isotopy such that at each time, two invariant bundles exist. The eigenvalues may be slightly modified along that isotopy, however each eigenvalue may be retrieved by dilating or contracting normally to the other eigendirection (which preserves the other eigenvalue).
\end{proof}

\begin{proof}[Proof in any dimension:]
Fix $d>2$, and assume that the proposition in proved in all dimensions less than $d$. Let $C>1$ and $\cA$ be a saddle cocycle bounded by $C$ associated  to a sequence $A_1,...,A_p$ on the bundle $\cE=\{1,...,p\}\times \RR^d$ and let $\cE=E^s\oplus E^u$ be the stable/unstable splitting. One of these two bundles has dimension greater or equal to $2$, we assume it is $E^s$ (the other case is symmetrical).
Since the eigenvalues of $\cA$ are real, there is a proper invariant subbundle $F\subset E^s$. 
For all $N\in \NN$, if the stable/unstable splitting $E^s\oplus E^u$ is not $\phi_{C,d}(N)$-dominated, by Lemma~\ref{l.bdp4.4}, either $H=F\oplus E^u$ is not $N$-dominated for the restriction $\cA_{|H}$, or $E^s/F\oplus E^u/F$ is not $N$-dominated for $\cA_{/F}$. 
Let $\epsilon>0$. By the induction hypothesis, one can find $N_{d'}\in \NN$ such that the conclusions of \cref{p.smallangle} are satisfied with respect to $\epsilon, C$ and any $2\leq d'<d$. 

Notice that for any $N$ greater than some $\tilde{N}_{d'}$ it holds: if a $d'$-dimensionnal saddle cocycle is bounded by $C$ and not $N$-dominated, then it is not $N_{d'}$-dominated. Let $$N_0=\max_{2\leq d'<d}\{\tilde{N}_{d'}\}.$$ 

Then, if $\cA$ is not $\phi_{C,d}(N_0)$-dominated, by \cref{l.bdp4.4} and the induction hypothesis, one has either:

\begin{itemize}
\item a path $\cA_{|H,t}$ of radius  $\leq\epsilon$ that joins $\cA_{|H}$ to a saddle cocycle that has a minimum stable/unstable angle less than $\epsilon$, and such that the eigenvalues are preserved all along the path. One may extend that path to a path $\cA_t$ of saddle cocycles on $\cE$, the same way as we extended the path $\cA_{|F}$ in the proof of Proposition~\ref{p.realeigen}. That extended path has the same radius as $\cA_{|H,t}$. The minimum stable/unstable angle of $\cA_t$  is less or equal to that of $\cA_{|H,t}$, for all $t$, in particular that of $\cA_1$ is less than $\epsilon$. Finally, for all $t$, the eigenvalues of  $\cA_t$ are the same as those of $\cA$.   

\medskip
\item or a path $\cA_{/F,t}$  of radius $\leq\epsilon$ that joins $\cA_{/F}$ to a saddle cocycle that has a minimum stable/unstable angle less than $\epsilon$, and such that the eigenvalues of the first return map are preserved all along the path. Choosing an orthonormal basis in each fibre of $F$ and completing by a basis of the orthonormal bundle $F^\perp$, the linear automorphism $\cA$ writes in those bases as a sequence of matrices of the form:
$$
\left(
\begin{array}{cc}
A_{|F,i}& B\\
0& A^\perp_{F,i}
\end{array}
\right),
$$
where the sequence of matrices $A^\perp_{F,i}$ identifies with the quotient $\cA_{/F}$. We define a path $\cA_t$ replacing the sequence $A^\perp_{F,i}$ by the sequence $A^\perp_{F,t,i}$ that corresponds to the cocycle $\cA_{/F,t}$. As both $\cA_{|F}$ and $\cA_{/F,t}$ are saddle cocycles, for all $t$, $\cA_t$ is also a saddle cocycle. 

Let $\cE=E^s_t\oplus E^u_t$ be the stable/unstable splitting for $\cA_t$. By construction $F$ is a subbundle of $E^s_t$ and is invariant by $\cA_t$. The stable/unstable splitting of ${\cA_{t}}_{/F}$, which identifies to $\cA_{/F,t}$, is $\cE/F={E^s_t}/F\oplus {E^u_t}/F$. Notice that, given three vector subspaces $\Gamma \subset \Delta$ and $\Lambda$ of $\RR^d$, one has the following relation on minimum angles: $$\angle(\Delta,\Lambda)\leq \angle(\Delta/\Gamma,\Lambda/\Gamma).$$ 
Therefore, the minimum stable/unstable angle of each $\cA_t$ is less than that of $\cA_{/F,t}$, in particular, that of $\cA_1$ is less than $\epsilon$. The path $\cA_t$ has same radius as the quotient path  $\cA_{/F,t}$, in particular it is less than $\epsilon$. The eigenvalues are the same for $\cA=\cA_0$ and $\cA_t$.
\end{itemize}
We are done in both case, which ends the proof of Proposition~\ref{p.smallangle}.
\end{proof}

\section{Further results and announcements}\label{s.furtherresults}

In this paper, we assume that some $i$-strong stable/unstable directions exist at any time $t$ of the homotopy, and we obtain a perturbation lemma that preserves the corresponding local invariant manifolds entirely, outside small neighbourhoods.

We announce a 'manifolds prescribing pathwise Franks Lemma', that is, a generalisation of Theorem~\ref{t.mainsimplestatement} that allows to prescribe the strong stable/unstable manifolds within any 'admissible' flag of stable/unstable manifolds. That generalisation implies for instance that if the $i$-strong stable direction exists for all the cocycles  $\gamma_t$, for $0\leq t\leq 1$, and if, for some time $t_0$, all the eigenvalues inside the $i$-strong stable direction have same moduli, then one can do the pathwise Franks' lemma, prescribing the $j$-strong stable manifolds, for all $j\leq i$, inside arbitrarily large annuli of fundamental domains of $i$-strong stable manifold.

Let us formally define these objects. Let $f$ be a $C^1$-diffeomorphism and $P$ be a periodic saddle point for $f$. To simplify the statement, we assume that $P$ is a fixed point. Given a fundamental domain of the stable/unstable manifold of $P$ identified diffeomorphically to $\SS^{i_s-1}\times[0,1[$, an {\em annulus} $A(f,P)$ is a subset of the form $\SS^{i_s-1}\times[0,\rho[$, where $0<\rho<1$. We denote by $W^{s,i}(f)$ the $i$-strong stable manifold of $f$.
An {\em $i$-admissible flag of manifolds for $f$} is a flag $W^{s,1}\subset ... \subset W^{s,i}=W^{s,i}(f)$ of $f$-invariant manifolds such that each $W^{s,k}$ is an immersed boundaryless $k$-dimensional manifold that contains $P$, and that is smooth at all points, but possibly $P$. A particular case (and simple case) of the announced Franks'  Lemma that prescribes manifolds can be stated as follows:

\begin{othertheorem}
Assume that $(\cA_t)_{t\in[0,1]}$ is a path that starts at the sequence of matrices $\cA_0$ corresponding to the derivative of $f$. Assume that, for all $t$, the corresponding first return map has an $i$-strong stable direction. Assume also that there is some time $t_0$ such that the $i$ strongest stable eigenvalues $\lambda_1(t_0),...\lambda_i(t_0)$ of $\cA_{t_0}$, counted with multiplicity, have same moduli. Then, for any $i$-admissible flag $W^{s,1}\subset ... \subset W^{s,i}$ for $f$, for any annulus $A(f,P)$, for any neighbourhood $U$ of the orbit of $P$, there is a diffeomorphism $g$ such that it holds:
\begin{itemize}
\item $g$ is a perturbation of $f$ whose size can be taken arbitrarily close to the radius of the path $\cA_t$,
\item $g^{\pm 1}=f^{\pm 1}$ on the orbit $\Orb_P$ of $P$ and outside $U$,
\item the sequence of matrices $\cA_1$ corresponds to the derivative $Dg_{|\Orb_P}$,
\item for all $1\leq j\leq i$, if $g$ has a $j$-strong stable manifold, then it coincides with $\cW^{s,j}$ by restriction to the annulus $A(f,P)$.
\end{itemize}
\end{othertheorem}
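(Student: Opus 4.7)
The plan is to combine two applications of Theorem~\ref{t.mainsimplestatement} with a flag-realigning local modification at $t_0$, enabled by the equal-modulus condition.

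\emph{Stage 1.} Apply Theorem~\ref{t.mainsimplestatement} to the sub-path $(\cA_t)_{t\in[0,t_0]}$. This yields a diffeomorphism $f_1$ coinciding with $f$ on the orbit of $x$ and outside a small neighbourhood $\cU_1\subset \cU$, with $Df_1|_X=\cA_{t_0}$, $W^{s,i}(f_1)=W^{s,i}(f)$ outside $\cU_1$, and $C^1$-distance to $f$ at most $r((\cA_t)_{t\in[0,t_0]})+\epsilon/3$. At this stage the lower-dimensional strong stable manifolds of $f_1$ do not even exist, as the top $i$ stable eigenvalues of $\cA_{t_0}$ share a common modulus.

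\emph{Stage 2.} We exploit the equal-modulus degeneracy to realign any invariant flag inside $W^{s,i}(f_1)$. By the linearisation lemma (Corollary~\ref{c.linearisation}) we may assume $f_1$ is linear in local coordinates near each point of the orbit, with derivative $\cA_{t_0}$. Because the restriction $\cA_{t_0}|_{E^{s,i}}$ lies, up to the scaling $|\lambda|$, in a compact group, there is a continuous family of $\cA_{t_0}$-invariant flags of subspaces of $E^{s,i}$, and any one such flag may be reached from another by an arbitrarily small $GL(i,\RR)$-conjugation. We then conjugate $f_1$ by a diffeomorphism $\phi$ supported in a smaller neighbourhood $\cU_2\subset \cU_1$, equal to the identity on the orbit of $x$ with derivative the identity at $x$, such that the resulting $f_2$ (with $Df_2|_X=\cA_{t_0}$ still) has local $f_2$-invariant flags of submanifolds of dimensions $j=1,\ldots,i$ whose intersections with the annulus $A(f,x)$ coincide with $W^{s,j}\cap A(f,x)$. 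The key point is that the coincidence on $A(f,x)$ does not force tangent equality at $x$: once the $g$-dynamics inside $\cU$ is arranged so that iterates of the local invariant manifolds exit $\cU$ onto the prescribed flag pieces, we are done, and the equal-modulus condition is precisely what supplies this freedom.

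\emph{Stage 3.} Apply Theorem~\ref{t.mainsimplestatement} to the sub-path $(\cA_t)_{t\in[t_0,1]}$ starting from $f_2$. This produces $g$ with $Dg|_X=\cA_1$ and $g=f_2$ outside a still smaller neighbourhood $\cU_3\subset \cU_2$. For every index $j$ such that $\cA_1$ has a $j$-strong stable direction that persists continuously along $(\cA_t)_{t\in[t_0,1]}$, the preservation property of the theorem transports the alignment established in Stage 2 forward, giving $W^{s,j}(g)\cap A(f,x)=W^{s,j}\cap A(f,x)$. The accumulated $C^1$-size is at most $r(\gamma)+\epsilon$, by taking all the intermediate neighbourhoods small enough.

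\emph{The main obstacle} is the continuity issue in Stage 3: generically, the $j$-strong stable direction at $\cA_1$ need not extend continuously backwards all the way to $\cA_{t_0}$, since further eigenvalue-modulus crossings may occur along $(t_0,1]$. Theorem~\ref{t.mainsimplestatement} then does not by itself propagate the Stage 2 alignment through these crossings. The remedy is to subdivide $[t_0,1]$ at every such crossing and, at each crossing, reapply the equal-modulus trick of Stage 2 locally to re-prescribe the alignment of the emerging lower-dimensional strong stable manifolds before continuing. Verifying that this inductive bookkeeping can be carried out with a total accumulated radius bounded by $r(\gamma)+\epsilon$, and that the flag-realigning conjugations can be realised with arbitrarily small $C^1$-size, constitutes the bulk of the technical work behind the announced statement.
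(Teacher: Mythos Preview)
The paper does not prove this theorem: it appears in Section~\ref{s.furtherresults} purely as an \emph{announcement} of forthcoming work, with no argument given. So there is no ``paper's proof'' to compare your proposal against.

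That said, your three-stage outline is a reasonable reading of why the equal-modulus hypothesis at $t_0$ is the crux, and your final paragraph is honest about where the work lies. But Stage~2 as written is not correct. The claim that $\cA_{t_0}|_{E^{s,i}}$ ``lies, up to the scaling $|\lambda|$, in a compact group'' fails in general (a Jordan block with eigenvalue of modulus $1$ is a counterexample), and even when it holds, the subsequent assertions that ``there is a continuous family of $\cA_{t_0}$-invariant flags'' and that ``any one such flag may be reached from another by an arbitrarily small $GL(i,\RR)$-conjugation'' are false whenever the eigenvalues, though of equal modulus, are distinct: the invariant subspaces are then rigid. Conjugating $f_1$ by a $\phi$ with $D\phi|_X=\mathrm{Id}$ cannot, by itself, redirect the eventual $j$-strong stable manifold of $g$ onto an arbitrary prescribed $W^{s,j}$.

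What the equal-modulus condition actually buys is that a \emph{small further perturbation of the cocycle} near $t_0$ (hence a small detour of the path, costing negligible extra radius) can make the restriction to $E^{s,i}$ a homothety; at that moment every flag of linear subspaces of $E^{s,i}$ is invariant, and one can then separate the moduli again so that the emerging $j$-strong stable directions point wherever one wishes. This cocycle-level detour, combined with the manifold-matching machinery of Propositions~\ref{p.pertpropsimple} and~\ref{p.pertpropdualsimple}, is the mechanism one expects to replace your Stage~2. Your Stage~3 obstacle (later eigenvalue crossings on $(t_0,1]$) is genuine and your proposed remedy---subdivide and repeat---is the natural one, but again it rests on getting the realignment step right.
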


The perturbation techniques for linear cocycles as developed in~\cite{M1,BDP,BGV} successively, can be easily rewritten in order to take into account the need of a good path between the initial cocycle and the pertubation. The perturbations of cocycles obtained by the techniques of~\cite{BGV} can indeed be done along paths whose size are small (R. Potrie actually wrote a proof of it in~\cite{Po}). A general description of the vectors of Lyapunov vectors that can be reached by a perturbation of a linear cocycle has been recently given by Bochi and Bonatti~\cite{BoBo}; moreover, those perturbations are built so that they can be reached from the initial cocycle by a isotopy. 
These isotopic perturbation lemmas for cocycles and the theorem announced above lead to easy and systematic ways to create strong connections and heterodimensional cycles whenever there is some lack of domination within a homoclinic class.

We claim that with some hypotheses on the signs of the eigenvalues of the first return map of $\cA_1$, the theorem above can be adapted to prescribe the entire semi-local flag of strong stable manifolds outside $U$ within an isotopy class of $i$-admissible flags determined by the isotopy class of the path of eigenvalues $\bigl(\lambda_1(t),...\lambda_i(t)\bigr)$ (here $\lambda_j(t)$ is the $j$-th eigenvalue of $\cA_t$, counting with multiplicity).
In a work in progress, Bonatti and Shinohara used an adapted version of this argument in dimension $2$, in order to build their new examples of wild $C^1$-generic dynamics.

Finally, we claim that these results, with some more work and excluding the codimension one manifolds\footnote{it is possible to preserve {\em annuli} of codimension $1$ stable or unstable manifolds by conservative perturbations, however there seems to be an obstruction to preserving them semi-locally}, can be adapted to hold in the volume preserving and symplectic settings. They can also clearly be adapted to the flows case, but here again technical work is needed.

\end{document}